\documentclass[12pt, oneside, reqno]{amsart}

\usepackage[margin=2.5cm]{geometry}
\usepackage{amssymb,amsmath,amsthm,amsfonts}
\usepackage{mathtools, enumitem}
\usepackage[bbgreekl]{mathbbol}
\usepackage[cal=boondox]{mathalpha}
\usepackage[colorlinks=true,linkcolor=blue,citecolor=blue]{hyperref}
\usepackage{todonotes}
\DeclareFontFamily{U}{matha}{\hyphenchar\font45}
\DeclareFontShape{U}{matha}{m}{n}{
  <5> <6> <7> <8> <9> <10> gen * matha
  <10.95> matha10 <12> <14.4> <17.28> <20.74> <24.88> matha12
}{}
\DeclareSymbolFont{matha}{U}{matha}{m}{n}
\DeclareFontSubstitution{U}{matha}{m}{n}
\DeclareFontFamily{U}{mathx}{\hyphenchar\font45}
\DeclareFontShape{U}{mathx}{m}{n}{
  <5> <6> <7> <8> <9> <10>
  <10.95> <12> <14.4> <17.28> <20.74> <24.88> mathx10
}{}
\DeclareSymbolFont{mathx}{U}{mathx}{m}{n}
\DeclareFontSubstitution{U}{mathx}{m}{n}
\DeclareMathDelimiter{\vvvert}{0}{matha}{"7E}{mathx}{"17}

\newcommand{\clush}{\cl{u}_h^{(\sigma)}}
\newcommand{\uhs}{{u_h^{(\sigma)}}}
\newcommand{\muhs}{{\mu_h^{(\sigma)}}}
\newcommand{\etas}{\eta^{(\sigma)}}
\newcommand{\oscs}{\osc^{(\sigma)}}

\newcommand{\cl}[1]{\mathcal{{#1}}}
\newcommand{\R}{\mathbb{R}}
\newcommand{\Om}{\varOmega}
\newcommand{\oh}{\varOmega_h}

\newcommand{\Khat}{\hat K}
\newcommand{\Sc}{\mathcal{S}}
\newcommand{\bdr}{b_{s}}            
\newcommand{\hbdr}{\hat{b}_{s}}
\newcommand{\bdrb}{b_{s(\beta)}}
\newcommand{\bint}{b}                      

\newcommand{\Pk}{P_k}
\newcommand{\Pkint}[1]{\mathring{P}_{#1}}  
\newcommand{\Pkperp}[1]{P_{#1}^{\perp}}    
\newcommand{\W}[2]{W_{#1}(\partial_\beta, #2)}   
\newcommand{\trc}{\mathop{\gamma}}    

\newcommand{\nrmK}[3]{\| #1 \|_{L_{#2}(#3)}}
\newcommand{\ntrip}[1]{\vvvert {#1} \vvvert}
\newcommand{\tnrm}[2]{\ntrip{#1}_{#2}}
\newcommand{\ip}[3]{(#1, #2)_{#3}}         
\newcommand{\ap}[1]{\langle {#1} \rangle}
\newcommand{\Dh}{D_h}
\DeclareMathOperator{\sgn}{sgn}
\DeclareMathOperator{\dive}{div}
\DeclareMathOperator{\osc}{osc}
\newcommand{\intr}[1]{\mathop{\mathrm{int}}{#1}}
\newcommand{\lsim}{\,\lesssim\,}
\newcommand{\Xhok}{\hat{X}_{h,0}^k}
\newcommand{\Xhk}{\hat{X}_h^k}
\newcommand{\ET}{E}                    
\newcommand{\bform}{\cl b}       
\newcommand{\Enrm}{\cl E}              
\newtheorem{theorem}{Theorem}[section]
\newtheorem{lemma}[theorem]{Lemma}
\newtheorem{proposition}[theorem]{Proposition}
\newtheorem{corollary}[theorem]{Corollary}
\newtheorem{assumption}[theorem]{Assumption}
\theoremstyle{definition}
\newtheorem{definition}[theorem]{Definition}
\theoremstyle{remark}
\newtheorem{remark}[theorem]{Remark}

\title[Fortin operators for DPG advection]{Fortin operators for
  {DPG} advection discretizations}

\author{Pablo Cort{\'e}s Castillo}
\address{Portland State University, PO Box 751, Portland OR 97201, USA }
\email{pcortes@pdx.edu}

\author{Jay Gopalakrishnan}
\address{Portland State University, PO Box 751, Portland OR 97201, USA }
\email{gjay@pdx.edu}

\begin{document}

\begin{abstract}
  We construct Fortin operators for discontinuous Petrov--Galerkin
  (DPG) discretizations of the advection equation with a piecewise
  constant divergence-free advection vector~$\beta$, on simplicial
  meshes of any spatial dimension~$N$ and for any polynomial
  degree~$k \ge 1$ of the trial space. A minimal test space is built on
  each element from facet and interior bubbles and later augmented. The Fortin operator is shown
  to be bounded, uniformly over shape-regular mesh families, in the
  natural $\beta$-weighted broken test graph norm built on $L_q$ for
  every $1 < q < \infty$, where $q$ is the exponent conjugate to the
  trial exponent $p$. A non-characteristic facet condition is
  assumed when $k \ge 2$, while
  the lowest-order case requires no such condition and admits
  characteristic facets. As applications we prove that the practical
  fully discrete residual minimization method is quasioptimal in the
  DPG energy norm for every $1 < p < \infty$, with a quasioptimality
  constant governed solely by the Fortin operator, that its
  computable residual is a globally reliable and efficient a
  posteriori error estimator, and that augmenting the test space and changing the test norm improves the results in  the lowest-order case.
\end{abstract}

\maketitle

\section{Introduction}
\label{sec:intro}

The discontinuous Petrov--Galerkin (DPG) method
\cite{DemkoGopal10,DemkoGopal11,DPGacta} is a residual minimization
method. Given a variational formulation  over a
trial space $X$ and a test space $Y$, one selects the discrete
solution by minimizing the residual over a
finite dimensional trial subspace $X_h \subset X$, measuring it in the dual norm of
$Y$. So posed, the method is immediately stable and quasioptimal.  The
difficulty is that the dual norm computation requires a solve that is both global and
infinite-dimensional. Variational DPG reformulation techniques (by breaking the test space~\cite{CarstDemkoGopal16, DPGacta}) have shown how to replace the global solves by local
element-by-element solves. The infinite-dimensionality is then eliminated
by the \emph{practical DPG method} which measures the residual in the
local dual norm of a finite dimensional test space $Y_h \subset Y$
instead.  The application of this method to advection is
studied in this paper.  The standard device~\cite{FuhreHeuer24,GopalQiu14} for studying the price of
replacing $Y$ by $Y_h$ is a \emph{Fortin operator}: a linear
map $\varPi_h : Y \to Y_h$, bounded independently of the mesh,  whose deviation $v - \varPi_h v$ is
not seen  by the discrete trial space on the mesh.
For advection, however, no such operator into a
test space of polynomials on the same mesh elements has been available.
We construct one in this paper.

We consider the model problem
\begin{equation}
  \label{eq:advection}
  \beta \cdot \nabla u  = f \quad\text{ in } \Om,
  \qquad
  u = 0 \quad\text{ on } \Gamma_{\mathrm{in}},
\end{equation}
where $\Om \subset \R^N$ is a Lipschitz polyhedral domain,
$\Gamma_{\mathrm{in}} := \{ x \in \partial\Om : \beta(x) \cdot n(x)
< 0\}$ is the inflow boundary, and $\beta$ is a piecewise constant,
divergence-free advection field. For such problems, it is natural to adopt  a functional setting beyond
Hilbert spaces. Residual minimization in $L_p$-type graph norms is
attractive for transport because it can reduce the over-smearing
associated with $L_2$ settings and, as $p \to 1^+$, eliminate Gibbs
phenomena near discontinuities~\cite{BroerDahmeSteve18,Guerm99,MugaTylerZee19,MugaZee20}.
Such a
residual minimization for advection--reaction in $L_p(\Om)$
was carried out in~\cite{MugaTylerZee19},
working with globally conforming test spaces.
We adopt this avenue, but with broken test spaces.
Our trial space pairs discontinuous
piecewise polynomials of degree $k-1$ with interface variables of
degree $k$, for arbitrary $k \ge 1$. Our minimal local test space on an
$N$-simplex $K$ is
$
  V(K) \;=\;  \bdr \, \Pkperp{k}(K) \;+\; \bint\, P_{k-2}(K).
$
Here $\bdr$ is a signed sum of facet bubbles tracking the
inflow/outflow pattern of $\beta$, $\bint$ is the interior bubble,
and $\Pkperp{k}(K)$ excludes the degree-$k$ interior bubbles (all precisely defined later). This
space has the minimal dimension matching the active trial degrees of
freedom, and the resulting Fortin operator is bounded uniformly in
the mesh.

The main results have three parts. First, we construct this minimal
Fortin operator for general $k$ and $N$ and prove its mesh-uniform
boundedness in a $\beta$-weighted broken graph norm for every
$1<p<\infty$. Our lowest-order ($k=1$) results are for any shape-regular mesh,
while for $k\ge2$ we assume  a uniform non-characteristic facet
condition. Second, the practical method is quasioptimal in
the mesh-dependent DPG energy norm, converges at the expected rate in that norm, and admits a
globally reliable and efficient residual estimator. Finally, we
introduce a scale $\sigma$ into the test norm and show that a
mean-preserving augmentation of the test space, costing at most two
functions per element, makes the Fortin operator bounded uniformly
for every fixed $\sigma$ above the intrinsic element-wise scale. At
lowest order this yields optimal first-order convergence in the
mesh-independent $L_2(\Om)$ norm and direct estimator control up to
a data oscillation, which vanishes for piecewise constant data (Theorem~\ref{thm:kone-rates}). To
our knowledge this is the first analysis exhibiting the influence of
such a scale on the error of any practical DPG method.

The closest antecedent of our test space is in the transport method
of~\cite{DemkoGopal10}, one of the origins of the modern  DPG methodology.
Optimal $L_2$ estimates were obtained for constant $\beta$ using exactly
tailored, flow-aligned test functions that are generally nonpolynomial and
may be discontinuous within an element. Its two-dimensional error analysis
relies on a flow-dependent mesh-layer condition. The present work instead
constructs the  explicit minimal polynomial test space $V(K)$ in each
$N$-simplex with a Fortin bound  for every $1<p<\infty$, at the cost of
weighted test norms and, for $k\ge2$, a non-characteristic facet condition.
The space $V(K)$ above may be viewed as a polynomial surrogate, licensed
by the Fortin condition: its two summands play the roles of the trace  and
interior parts of the earlier space.

Our results complement those of~\cite{BroerDahmeSteve18}. In the
Hilbert setting and for variable convection fields, they  achieve
discrete inf-sup stability
uniformly in the relative orientation of the flow and mesh,
in a mesh-independent graph norm,
by taking test polynomials of one degree higher on a subgrid
refinement of each element, of fixed but unspecified depth.
Our construction instead provides an explicit minimal test space of
degree $k+N$ on the original mesh, together with a Fortin operator
and a posteriori estimates for every $1<p<\infty$.
The mesh-uniform Fortin bound we provide uses, in contrast, natural
element-wise weights, and we recover mesh-independent optimality only
at lowest order. Furthermore, while their analysis also accommodates
variable convection fields, extending our scaling argument beyond
piecewise constant fields remains open.

This distinction also clarifies the conclusions of~\cite{DemkoRoberMunoz22}. For an
element-wise constant convection field, their attempted local Fortin
construction for the convection--reaction problem could not be
bounded uniformly in the unweighted graph norm, particularly as
element facets align with the convection direction. For pure
advection, we show that a uniformly bounded minimal local operator
does exist when the test norm carries certain weights (called $a_K$ later). The angle
sensitivity is nevertheless genuine for $k\ge2$. Somewhat surprisingly, at lowest order, our estimates are completely insensitive to the flow angle and to the presence of
characteristic facets.

The rest of the paper is organized as follows.
Section~\ref{sec:weakform} fixes notation and states the practical
DPG method. Section~\ref{sec:testspace} constructs the local test
space and Section~\ref{sec:fortin} the Fortin operator, whose
unisolvency is proved there. Section~\ref{sec:scaling} develops the
reference-element equivalences and scaling identities, which
Section~\ref{sec:continuity} uses to prove boundedness of
$\varPi_h$ and of its augmented variants.
Section~\ref{sec:quasiopt} places the method in a functional
setting, recalls the wellposedness of the undiscretized problem, and
proves quasioptimality and convergence rates.
Section~\ref{sec:apost} treats a posteriori error control and
Section~\ref{sec:lowest} the lowest-order case with a scaled test
norm. Section~\ref{sec:remarks} collects concluding remarks and open
problems.

\section{The DPG method for advection}
\label{sec:weakform}

Let $\oh$ be a conforming simplicial mesh of $\Om \subset \R^N$,
$N \ge 1$. Let $\Khat$ denote the unit reference $N$-simplex with
vertices $\hat a_0, \dots, \hat a_N$ and barycentric coordinates
$\hat\lambda_0, \dots, \hat\lambda_N$. Each element $K \in \oh$, with
vertices $a_0,\dots,a_N$, is the image of $\Khat$ under an affine
homeomorphism
\begin{equation}
  \label{eq:affinemap}
  x = \Phi_K(\hat x) = M_K \hat x + g_K,
  \qquad M_K \in \R^{N\times N},\ g_K \in \R^N,
\end{equation}
mapping $\hat a_i$ to $a_i$. 
Let
$h_K = \operatorname{diam} K$.
For a measurable set $D$, $|D|$ denotes its measure, and  $|\cdot |$ also
denotes the Euclidean norm on $\R^N$ and the associated matrix norm, when applied to a vector or matrix;
the meaning will be clear from context. Recall the standard relations
\begin{equation}
  \label{eq:MhK}
  |M_K| \,\le\, C \, h_K, \qquad h_K \,\le\, C\, |M_K|,
\end{equation}
with constants depending only on $\Khat$. Throughout this paper,
we consider families of
meshes that are \emph{shape regular}:
\begin{equation}
  \label{eq:shapereg}
  \kappa_K := |M_K|\, |M_K^{-1}| \,\le\, \kappa_0
  \qquad \text{for all } K \in \oh,
\end{equation}
for a fixed $\kappa_0$.

The graph space for the advection operator $\beta \cdot \nabla$, and a
``broken'' version of it, are defined, for $1 \le r \le \infty$ and any
(sub)domain $D \subseteq \Om$, by
\begin{equation*}
  \W{r}{D} := \{ v \in L_r(D) \,:\, \beta\cdot\nabla v \in L_r(D) \},
  \qquad
  \W{r}{\oh} := \bigtimes_{K \in \oh} \W{r}{K}.
\end{equation*}
For an integer $s \ge 0$ and $1 \le r \le \infty$, we denote by
$W_r^s(D)$ the standard Sobolev space of functions on $D$ whose
distributional derivatives up to order $s$ lie in $L_r(D)$, with norm
$\| \cdot \|_{W_r^s(D)}$ and seminorm
$| v |_{W_r^s(D)} := \big( \sum_{|\alpha| = s}
\| \partial^\alpha v \|_{L_r(D)}^r \big)^{1/r}$ (with the usual
modification when $r = \infty$), and we write $H^s(D) := W_2^s(D)$.
Using a prime ($'$) to denote the dual space, and letting
\begin{equation}
  \label{eq:conjugate}
  q = p/(p-1) \text{ when } p>1,
  \quad \text{ and }\quad q = \infty \text{ when } p = 1,
\end{equation}
be  the conjugate exponent of  $p$,
define for each $K\in\oh$,  the element boundary
operator $D_K   : \W{p}{K} \to \big(\W{q}{K}\big)'$,
\begin{equation}
  \label{eq:DK}
  \ap{D_K w, v} \,:=\, \ip{\beta\cdot\nabla w}{v}{K}
  + \ip{w}{\beta\cdot\nabla v}{K},
\end{equation}
where $\ip{\cdot}{\cdot}{K}$ denotes the $L_2(K)$ inner product,
$w \in \W{p}{K}$ and $v \in\W{q}{K}$.  Throughout the action of a
functional $f \in X'$ on an $x \in X$ is denoted by $f(x)$ or
$\ap{f, x}_X$ where the subscript is omitted when no confusion can
arise.

Let $P_k(D)$ denote the space of polynomials of degree at most $k$
restricted to a subdomain $D$ of $\Om$ (with the convention that
$P_k(D) = \{0\}$ when $k<0$) and let
\[
  P_k(\oh) = \bigtimes_{K \in \oh} P_k(K)
\]
denote the space of piecewise  polynomials of degree at most $k$ on $\Om$ (which are possibly discontinuous across mesh facets).
When $w$ and $v$ are smooth functions (or can be approximated
arbitrarily closely by smooth functions) up to (including) the boundary
of $K$, such as $v, w \in P_k(\oh)$, integration by parts and the fact that
$\beta|_K$ is constant shows that
\begin{equation}
  \label{eq:DKbdry}
  \ap{D_K w, v} = \int_{\partial K} (\beta\cdot n)\, w\, v \,ds .
\end{equation}
H\"older's inequality shows directly that \eqref{eq:DK} is continuous
on the indicated graph spaces. Formula \eqref{eq:DKbdry} is its
boundary representation whenever $w$ and $v$ are smooth.

For a fixed degree $k\geq 1$, let
$
  S_h^k := \Pk(\oh) \cap C(\overline\Om)
  $
  denote the standard Lagrange finite element space
  and let
  $S_{h,0}^k:= \{z_h \in S_h^k:
  z_h|_{\Gamma_{\mathrm{in}}}=0\}$.
Then  any $z_h \in S_h^k$ has  single-valued
traces on mesh interfaces.
Abbreviate
\begin{equation}
  \label{eq:trfunctional}
  \ap{\Dh z, v}
  := \sum_{K \in \oh} \ap{D_K(z|_K),v|_K},
  \qquad v \in Y:= \W{q}{\oh}.
\end{equation}
(For now, this defines a linear operator $D_h : S_h^k \to Y'$, but we will extend its domain later.)
Write $\trc$ for the element-wise trace, i.e., $\trc(z_h)|_{\partial K}  = z_h|_{\partial K}$.
Define the discrete interface spaces
\begin{equation}
  \label{eq:trial}
  \Xhk := \Dh(S_h^k),
  \qquad
  \Xhok := \Dh(S_{h,0}^k).
\end{equation}
The DPG method uses the trial space
\begin{equation}
  \label{eq:Xh}
  X_h := P_{k-1}(\oh) \times \Xhok,
\end{equation}
whose second component incorporates the inflow boundary condition,
together with a finite dimensional
discrete test space $Y_h \subset P_r(\oh) \subset \W{q}{\oh}$ for some degree $r$ to be determined.
Multiplying \eqref{eq:advection} by a test function $v$ element-wise
and integrating by parts gives the bilinear form
\begin{equation}
  \label{eq:bform}
  \bform\big( (w_h, \Dh z_h),\, v \big)
  := \sum_{K \in \oh}\left[ -\int_K w_h \,\beta\cdot\nabla v \,dx
    + \int_{\partial K} \beta \cdot n\, z_h \,v \,ds \right]
\end{equation}
for any $w_h \in P_{k-1}(\oh)$, $z_h \in S_h^k$ and $v \in Y_h$.
Here we have used~\eqref{eq:DKbdry}, which also
shows that $\ap{\Dh z_h, v}$ depends only on $\trc(z_h)$.
The same formula~\eqref{eq:DKbdry} shows that, on replacing the second
term of the sum in~\eqref{eq:bform} by $\ap{\Dh z_h, v}$
of~\eqref{eq:trfunctional}, the bilinear form $\bform$ allows for test
functions $v$ in the larger space $\W{q}{\oh}$, this extension is recorded
in~\eqref{eq:extended-b} of Section~\ref{sec:quasiopt}.

Given $1 \le p < \infty$ and data
$f \in L_p(\Om)$, set $\ell(v) := \ip{f}{v}{\Om}$, a linear functional
on $\W{q}{\oh}$. The {\em practical DPG method} then finds
\begin{equation}
  \label{eq:method}
  \cl u_h\, := \, \operatorname*{arg\,min}_{\cl w_h \in X_h}
  \big\| \ell - \bform( \cl w_h, \cdot\,) \big\|_{Y_h'}
\end{equation}
which is the practical DPG solution of the advection
problem. The only requirement we shall place on $Y_h$ is that it
contain an advection-adapted space $V_h$ of minimal dimension,
constructed in Section~\ref{sec:testspace}, on which the Fortin
operator of this paper is built; see \eqref{eq:Yh-condition}.
Specification of the dual norm $\| \cdot \|_{Y_h'}$ and the
space $V_h$ (done later in \eqref{eq:Ynorm} and \eqref{eq:Vh}, respectively), completes the mathematical definition of the
method. The method, as stated in~\eqref{eq:method}, performs a
residual minimization over the discrete trial space.  For $p = 2$,
\eqref{eq:method} is equivalent, as is well known
\cite{DPGacta}, to a linear system computable element by
element (via the discrete Riesz map of the Hilbert subspace $Y_h$ or
via ``discrete optimal test functions''), and also to a saddle point
problem. For $p \neq 2$ it is equivalent to a mixed system with
monotone nonlinearity, in which the Riesz map is replaced by the
duality map of the Banach subspace $Y_h$ as shown in
\cite[eq.~(4.2)]{MugaTylerZee19} and \cite{MugaZee20}, where they also
call it ``DDMRes method.''  We will not need these reformulations in our
analysis. We will  work directly with~\eqref{eq:method}.

\section{The discrete local test space}
\label{sec:testspace}

The test space of the practical DPG method is constructed element by
element.  Hence throughout this section,
we fix a $K \in \oh$.
Using the notation of \eqref{eq:affinemap}, we write
$\lambda_i = \hat\lambda_i \circ \Phi_K^{-1}$ for the barycentric
coordinates of $K$, $F_i$ for the facet of $K$ opposite to $a_i$, and
$n_i$ for the outward unit normal on $F_i$.

\begin{assumption}
  \label{asm:beta}
  The advection field $\beta$ is constant on each element ($\beta|_K
  \equiv \beta_K \in \R^N$ for all $K \in \oh$) with continuous
  normal component across element interfaces (so that
  $\dive \beta = 0$ on $\Om$), and $\beta_K \neq 0$ for every
  $K \in \oh$.
\end{assumption}

Fields satisfying Assumption~\ref{asm:beta} arise canonically as the
lowest-order Raviart--Thomas interpolant of a divergence-free
velocity field.
For $\beta \in \R^N \setminus \{0\}$, define its \emph{sign vector}
(relative to the facets of $K$)
\begin{equation}
  \label{eq:signs}
  s(\beta) \,:=\, (s_0, \dots, s_N), \qquad
  s_i \,:=\, \sgn ( \beta \cdot n_i ) \,\in\, \{-1, 0, +1\} .
\end{equation}
Under Assumption~\ref{asm:beta}, write $s := s(\beta_K)$ for the sign
vector of $\beta_K$ on each $K$ (suppressing its dependence on $K$
from the notation). A facet $F_i$ is called a \emph{characteristic
facet} if $s_i=0$ (and non-characteristic otherwise).

\begin{lemma}
  \label{lem:two-noncharacteristic}
  For any $N$-simplex $K$ and any $\beta \in \R^N \setminus \{0\}$,
  the sign vector $s(\beta)$ of~\eqref{eq:signs} has at least two
  nonzero entries. In particular, under Assumption~\ref{asm:beta},
  every $K \in \oh$ has at least two non-characteristic facets.
\end{lemma}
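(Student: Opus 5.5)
The plan rests on a single identity relating the facet normals of $K$: the area-weighted outward normals of a simplex sum to zero,
\begin{equation*}
  \sum_{i=0}^N |F_i|\, n_i = 0 .
\end{equation*}
The cleanest way to obtain it is the divergence theorem applied to a constant vector field $c$:
\[
  0 = \int_K \dive c \,dx = \int_{\partial K} c\cdot n\,ds = \sum_i |F_i|\, c\cdot n_i .
\]
Since $c$ is arbitrary, the identity follows. Alternatively, one can use $\nabla\lambda_i = -n_i/\mathrm{dist}(a_i,F_i)$ together with $\sum_i \lambda_i \equiv 1$.

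Next I would take the dot product of the identity with $\beta$, which gives $\sum_i |F_i|\,(\beta\cdot n_i) = 0$. The weights $|F_i|$ are strictly positive. So if some $\beta\cdot n_i$ is nonzero, there must be at least one entry of each sign, and hence at least two nonzero entries of $s(\beta)$.

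It remains to rule out the case where every $\beta\cdot n_i = 0$. I would argue that the normals $n_1,\dots,n_N$ are linearly independent, since they are proportional to $\nabla\lambda_1,\dots,\nabla\lambda_N$, which are the rows of $M_K^{-1}$ composed with the reference gradients, and $M_K$ is invertible. Then $\beta\cdot n_i=0$ for $i=1,\dots,N$ forces $\beta=0$, which contradicts $\beta\neq 0$. Combining the two steps, $s(\beta)$ has at least one $+1$ and at least one $-1$ entry, and in particular at least two nonzero entries.

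The statement about meshes then follows immediately by applying this argument with $\beta=\beta_K\neq0$ from Assumption~\ref{asm:beta}. For $N=1$ the argument is consistent: the two normals are $\pm1$, and both facets are non-characteristic. No step is a real obstacle. The only point needing care is the linear independence of the normals, and it follows directly from the invertibility of the affine map in \eqref{eq:affinemap}.
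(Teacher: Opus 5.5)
Your proof is correct, but it takes a different route from the paper's. The paper's argument is pure linear algebra. Any $N$ of the $N+1$ normals $n_i$ form a basis of $\R^N$, so a nonzero $\beta$ can be orthogonal to at most $N-1$ of them, which bounds the number of characteristic facets directly.

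You instead use the closure identity $\sum_i |F_i|\, n_i = 0$, i.e.\ the zero net flux $\int_{\partial K}\beta\cdot n\,ds = 0$ of the constant field $\beta$. From it you show that the nonzero entries of $s(\beta)$ must occur with both signs. You then need only the weaker linear-algebra fact that some $N$ of the normals span $\R^N$, and you use it only to exclude $s(\beta)=0$.

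Your route buys a strictly stronger conclusion: every element has at least one inflow facet ($s_i=-1$) and at least one outflow facet ($s_i=+1$). The paper's count says nothing about signs. The paper never needs the extra sign information; it uses only the existence of non-characteristic facets, e.g.\ in Lemma~\ref{lem:0-nonchar-facets} and in the compactness proof of \eqref{eq:d-lower}. So the shorter counting argument suffices there. Your refinement would at most let one shrink the finite set $\Sc$ to sign vectors containing both signs. That changes nothing quantitatively, since only the finiteness of $\Sc$ enters the constants of Lemma~\ref{lem:equiv}.
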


\begin{proof}
  Any $N$ of the $N+1$ outward normals $n_i$ of a simplex are linearly
  independent. Hence the nonzero vector $\beta$ can be orthogonal
  to at most $N - 1$ of them.
\end{proof}
\begin{definition}[Bubbles]
  \label{def:bubbles}
  Define the interior and facet bubbles of
  $K$ by
  \begin{equation}
    \label{eq:bubbles}
    \bint := \prod_{j=0}^N \lambda_j \in P_{N+1}(K), \qquad
    b_i := \prod_{j \ne i} \lambda_j  = \frac{\bint}{\lambda_i} \in P_N(K).
  \end{equation}
  Note that $\bint > 0$ in the interior of $K$ and $\bint = 0$ on
  $\partial K$, while $b_i$ vanishes on $\partial K$ except in the
  relative interior of $F_i$, denoted by
  $\intr{F}_i = F_i \setminus \partial F_i$, where it is positive.
  Using the signs $s_i$ of \eqref{eq:signs}, set
  \begin{equation}
    \label{eq:bdrbubble}
    \bdr := \sum_{i=0}^{N} s_i\, b_i .
  \end{equation}
  For $j > N$ let
  \begin{align*}
    \Pkint{j}(K)
    & := \{ z \in P_j(K) : z|_{\partial K} = 0 \}
      = \bint \, P_{j - N - 1}(K),
    \\
    \Pkperp{j}(K)
    & :=
    \{
    z \in P_{j}(K)
    \; {:} \;
    ( z, \mathring{p})_K =0 \text{ for all } \mathring p \in \mathring{P}_j(K)
      \}
    = \Pkint{j}(K)^{\perp} \cap P_j(K),
  \end{align*}
  and
  \begin{align}
    \label{eq:jN-pols}
    \Pkint{j}(K) = \{0\}, \qquad
    \Pkperp{j}(K) = P_j(K),
    \qquad
    \text{ when }   0 \le j \le N.
  \end{align}
\end{definition}

When $s=s(\beta)$, two properties of $\bdrb$ are used repeatedly. First,
\begin{equation}
  \label{eq:bdr-pos}
  (\beta \cdot n)\, \bdrb \big|_{F_i}
  = (\beta\cdot n_i)\, s_i\, b_i \big|_{F_i}
  = |\beta \cdot n_i| \; b_i\big|_{F_i} \;\ge 0,
  \qquad i = 0,\dots,N,
\end{equation}
since on $F_i$ all other facet bubbles vanish. Second, $\bdrb$ is
nonzero on $\intr{F}_i$ for every non-characteristic facet $F_i$,
of which there are at least two for the $\beta$ we consider
(see Lemma~\ref{lem:two-noncharacteristic}), so
\begin{equation}
  \label{eq:bdr-nonzero}
  \bdrb \not\equiv 0
\end{equation}
is a nonzero polynomial.

\begin{lemma}[Trace extension]
  \label{lem:extension}
  The trace map $\trc(u) := u|_{\partial K}$ restricted to
  $\Pkperp{k}(K)$ is a bijection onto $\trc(\Pk(K))$.
\end{lemma}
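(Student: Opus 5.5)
The argument is plain linear algebra on $\Pk(K)$, split by whether $k \le N$ or $k > N$.

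\emph{Case $k \le N$.} By \eqref{eq:jN-pols} we have $\Pkperp{k}(K) = \Pk(K)$, so the trace map is onto $\trc(\Pk(K))$ by definition. It remains to check injectivity. If $z \in \Pk(K)$ vanishes on $\partial K$, then $z$ vanishes on each facet $F_i$, the zero set of $\lambda_i$. Hence $\lambda_i$ divides $z$ for each $i$. The affine functions $\lambda_0,\dots,\lambda_N$ are pairwise non-proportional irreducibles, so their product $\bint$ divides $z$. Since $\deg \bint = N+1 > k \ge \deg z$, this forces $z = 0$. In other words, $\Pkint{k}(K) = \{0\}$, consistent with \eqref{eq:jN-pols}.

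\emph{Case $k > N$.} Here I use the $L_2(K)$-orthogonal decomposition $\Pk(K) = \Pkint{k}(K) \oplus \Pkperp{k}(K)$. This holds because $\Pkperp{k}(K)$ is by definition the orthogonal complement of the subspace $\Pkint{k}(K)$ inside the finite-dimensional inner product space $\Pk(K)$.

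\emph{Surjectivity.} Given any $u \in \Pk(K)$, write $u = \mathring u + u^\perp$ with $\mathring u \in \Pkint{k}(K)$ and $u^\perp \in \Pkperp{k}(K)$. Since $\mathring u$ vanishes on $\partial K$, we get $\trc(u^\perp) = \trc(u)$.

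\emph{Injectivity.} Suppose $z \in \Pkperp{k}(K)$ and $\trc(z) = 0$. Then $z \in \Pkint{k}(K)$ by the definition of that space. So $z$ lies in both $\Pkint{k}(K)$ and its orthogonal complement, whence $(z,z)_K = 0$ and $z = 0$.

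The only point needing care is the identity $\Pkint{j}(K) = \bint\, P_{j-N-1}(K)$ stated in Definition~\ref{def:bubbles}, together with its degenerate version for $j \le N$. I would justify it by the divisibility argument above: a polynomial vanishing on the hyperplane piece $F_i$ vanishes on the whole hyperplane $\{\lambda_i = 0\}$, because $F_i$ has nonempty relative interior there. Hence $\lambda_i$ divides it, and the $\lambda_i$ are coprime. The injectivity part of the lemma does not actually need this characterization, since it uses only the definition of $\Pkint{k}(K)$ as the polynomials with zero trace. The characterization matters only for the $k \le N$ case, to confirm that no nonzero polynomial of degree at most $N$ vanishes on $\partial K$.
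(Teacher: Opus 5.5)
Your proof is correct and follows the paper's argument. The paper likewise uses that the trace map on $\Pk(K)$ has kernel $\Pkint{k}(K)$, gets injectivity from the orthogonal splitting $\Pk(K)=\Pkint{k}(K)\oplus\Pkperp{k}(K)$, and gets surjectivity from $\trc(u)=\trc(u^\perp)$; your separate divisibility argument for $k\le N$ makes explicit a point the paper absorbs into the convention \eqref{eq:jN-pols}, namely that no nonzero polynomial of degree at most $N$ vanishes on $\partial K$.
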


\begin{proof}
  The map $\trc: \Pk(K) \to \trc(\Pk(K))$  has kernel $\Pkint{k}(K)$. Since
  $\Pk(K) = \Pkint{k}(K) \oplus \Pkperp{k}(K)$, the restriction of
  $\trc$ to $\Pkperp{k}(K)$ is injective. It also remains onto, since
  any $u \in \Pk(K)$ splits as $u = \mathring u + u^\perp$ with
  $\mathring u \in \Pkint{k}(K)$ and $u^\perp \in \Pkperp{k}(K)$,
  whence $\trc(u) = \trc(u^\perp)$.
\end{proof}

\begin{definition}[Local and global test spaces]
  \label{def:testspace}
  Let $\Sc$ denote the set of sign vectors
  $s = (s_0, \dots, s_N) \in \{-1,0,+1\}^{N+1}$ with at least two
  nonzero entries.  For $k \ge 1$ and any sign vector $s \in \Sc$, set
  \begin{align}      \label{eq:VK}
    V_s(K) & := \bdr\, \Pkperp{k}(K) \,+\, \bint\, P_{k-2}(K)
           \;\subseteq\; P_{k+N}(K),
  \end{align}
  with $\bdr$ built from $s$ as in \eqref{eq:bdrbubble}. We
  emphasize that $V_s(K)$ depends on $s$ (through the boundary
  bubble $\bdr$, and only through it). Define the advection-adapted
  local space by
  \begin{equation}
    \label{eq:VKmethod}
    V(K) := V_{s(\beta_K)}(K),
  \end{equation}
  whose sign vector $s(\beta_K)$, defined in \eqref{eq:signs}, has
  at least two nonzero entries by
  Lemma~\ref{lem:two-noncharacteristic}.
  The local
  spaces \eqref{eq:VKmethod} are assembled into the Cartesian product
  \begin{equation}
    \label{eq:Vh}
    V_h := \bigtimes_{K \in \oh} V(K),
  \end{equation}
  identifiable as a subspace of $\W{\rho}{\oh}$ for every $1 \le \rho \le \infty$. Choose  $Y_h$ in~\eqref{eq:method} to be any computationally convenient space satisfying
  \begin{equation}
  \label{eq:Yh-condition}
  V_h \,\subseteq\, Y_h \,\subset\, \W{q}{\oh},
  \qquad \dim Y_h < \infty .
\end{equation}
(Since $V(K) \subset P_{k+N}(K)$, a simple choice is  $Y_h = P_{r}(\oh)$ for an $r \ge k+N$.)
\end{definition}

Our next assumption excludes characteristic facets, uniformly over
the mesh family. \emph{It is needed only for degrees $k \ge 2$.} All
our results in the lowest-order case $k = 1$ hold without it.

\begin{assumption}[Uniformly non-characteristic facets
  when $k \ge 2$]
  \label{asm:theta}
  There is a $\theta_0 \in (0, 1]$ such that for every $K \in \oh$
  and every facet $F_i$ of $K$,
  \begin{equation}
    \label{eq:noncharacteristic}
    | \beta_K \cdot n_i | \,\ge\, \theta_0\, |\beta_K| .
  \end{equation}
\end{assumption}

Assumption~\ref{asm:theta} forces the signs $s_i$ in \eqref{eq:signs} to lie in
$ \{-1,+1\}$. It is
automatically satisfied, e.g., when $\beta$ is a fixed constant
vector on $\Om$ and the mesh family avoids facets parallel to it. Note also
that on any single fixed mesh with no characteristic facets, some
$\theta_0 > 0$ exists trivially. But the content of the assumption is
its uniformity over a family of meshes.
All invocations of Assumption~\ref{asm:theta} in this section use only its implication that
$s_i \in \{-1,+1\}$, but the uniformity will be needed later.
Throughout this paper,
whenever $k \ge 2$ we require Assumption~\ref{asm:theta}, while for
$k = 1$ only Assumption~\ref{asm:beta} is required and
{\em characteristic facets are allowed.}

\begin{lemma}
  \label{lem:0-nonchar-facets}
  Let $\beta \in \R^N \setminus \{0\}$ and let $s = s(\beta)$
  determine the characteristic facets of $K$.
  Suppose that either Assumption~\ref{asm:theta} holds or $k =
  1$. Then any $u \in \Pkperp{k}(K)$ that vanishes on all
  non-characteristic facets must vanish everywhere.
\end{lemma}
\begin{proof}
  There are two cases:
  \begin{enumerate}[label=(\roman*)]
  \item When Assumption~\ref{asm:theta} holds,
    no facet is characteristic, and this gives
    $u|_{\partial K} = 0$, i.e.\
    $u \in \Pkint{k}(K) \cap \Pkperp{k}(K) = \{0\}$.
  \item If $k = 1$,  then
    $u$ is affine and vanishes on at least two facets
    by Lemma~\ref{lem:two-noncharacteristic}, so $u$ must vanish
    (since an affine function
    vanishing on a facet $F_i$ is a multiple of $\lambda_i$).
  \end{enumerate}
  In either case, $u\equiv 0$.
\end{proof}

\begin{lemma}[Structure of $V(K)$]
  \label{lem:structure}
  Suppose Assumption~\ref{asm:beta} holds, and that either
  Assumption~\ref{asm:theta} holds or $k = 1$.  Then the sum in
  \eqref{eq:VK} with $s = s(\beta_K)$ is direct and
  \begin{equation*}
    \dim V(K) = \dim \trc(\Pk(K)) + \dim P_{k-2}(K).
  \end{equation*}
\end{lemma}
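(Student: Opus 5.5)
The plan is to prove three things in turn: multiplication by $\bdr$ is injective on $\Pkperp{k}(K)$, multiplication by $\bint$ is injective on $P_{k-2}(K)$, and the two summands meet only in $\{0\}$. The dimension formula then follows from the direct sum together with Lemma~\ref{lem:extension}, which gives $\dim \Pkperp{k}(K) = \dim \trc(\Pk(K))$.

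The two injectivity claims are immediate. By \eqref{eq:bdr-nonzero}, $\bdr$ is a nonzero polynomial, and polynomials form an integral domain, so $u \mapsto \bdr u$ is injective on $\Pkperp{k}(K)$. Since $\bint \ne 0$, the same reasoning shows $\dim \bint P_{k-2}(K) = \dim P_{k-2}(K)$.

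The main step is the trivial intersection. Suppose $\bdr u = \bint p$ with $u \in \Pkperp{k}(K)$ and $p \in P_{k-2}(K)$.
\begin{enumerate}[label=(\roman*)]
\item Restrict this identity to a non-characteristic facet $F_i$, so that $s_i \neq 0$. There $\bint = 0$, and the facet bubbles $b_j$ with $j\ne i$ vanish, so $s_i b_i u = 0$ on $F_i$.
\item Since $b_i > 0$ on $\intr{F}_i$, we get $u = 0$ on $\intr{F}_i$, and by continuity $u = 0$ on all of $F_i$.
\item Thus $u$ vanishes on every non-characteristic facet. Lemma~\ref{lem:0-nonchar-facets} applies, because Assumption~\ref{asm:theta} holds or $k=1$, and gives $u \equiv 0$.
\item Hence $\bint p = 0$, so $p = 0$.
\end{enumerate}
This shows the sum in \eqref{eq:VK} is direct.

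Combining the three facts,
\[
\dim V(K) = \dim \bdr\Pkperp{k}(K) + \dim \bint P_{k-2}(K) = \dim \Pkperp{k}(K) + \dim P_{k-2}(K),
\]
and Lemma~\ref{lem:extension} turns the first term into $\dim \trc(\Pk(K))$. The only delicate point is step (iii), where the hypotheses are genuinely needed. With $k\ge 2$ and a characteristic facet present, $u$ is known to vanish only on the non-characteristic facets, and that does not force $u=0$. Lemma~\ref{lem:0-nonchar-facets} is exactly what handles this.
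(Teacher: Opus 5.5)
Your proof is correct and follows the paper's argument essentially step for step: multiplication by the nonzero polynomials $\bdr$ and $\bint$ is injective, Lemma~\ref{lem:extension} supplies the dimension count, and the intersection is trivial by restricting to the non-characteristic facets and invoking Lemma~\ref{lem:0-nonchar-facets}. Your facet-by-facet restriction is just a slightly more explicit version of the paper's restriction to $\partial K$.
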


\begin{proof}
  Since $\bdr$ and $\bint$ are nontrivial polynomials (see
  \eqref{eq:bdr-nonzero}), the multiplication maps $u \mapsto \bdr u$
  and $z \mapsto \bint z$ are injective. Hence
  $\dim(b P_{k-2}(K)) = \dim P_{k-2}(K)$, and using also
  Lemma~\ref{lem:extension},
  $\dim \bdr \Pkperp{k}(K) = \dim \Pkperp{k}(K) = \dim \trc(\Pk(K))$.

  It only remains to show the intersection of the two summands is
  trivial. Suppose $\bdr u = \bint z$ with $u \in \Pkperp{k}(K)$,
  $z \in P_{k-2}(K)$. Restricting to $\partial K$ and using
  $\bint|_{\partial K} = 0$ gives $\bdr u = 0$ on $\partial K$.  Since
  $\bdr|_{F_i} = s_i b_i \neq 0$ on $\intr{F}_i$ for every
  non-characteristic $F_i$, the trace of $u$ vanishes on all
  non-characteristic facets. By Lemma~\ref{lem:0-nonchar-facets},
  $u = 0$, so $\bint z = 0$, which implies $z = 0$.
\end{proof}

Lemma~\ref{lem:structure} shows that $V(K)$ is \emph{minimal} in the
sense that $\dim V(K)$ equals the number of Fortin
conditions~\eqref{eq:fortin-conditions} of the next section, with
\eqref{eq:fortin-conditions} being the smallest set of conditions that
forces the Fortin property, since the trial degrees of freedom active
on $K$ enter \eqref{eq:bform} only through $\trc(\Pk(K))$ and through
$\beta\cdot\nabla P_{k-1}(K) = P_{k-2}(K)$.

\section{The Fortin operator}
\label{sec:fortin}

This section establishes the unisolvency of equations defining our first Fortin operator, deferring all quantitative
bounds to Section~\ref{sec:continuity}, which draws on the scaling
estimates of Section~\ref{sec:scaling}.

\begin{lemma}[Definition of $\varPi_K$]
  \label{lem:fortin-def}
  Suppose Assumption~\ref{asm:beta} holds, and that either
  Assumption~\ref{asm:theta} holds or $k = 1$. For every
  $v \in \W{q}{K}$ there is a unique
  $\varPi_K v \in V(K)$ such that
  \begin{subequations}
    \label{eq:fortin-conditions}
    \begin{alignat}{2}
      \ap{D_K w, \varPi_K v - v} &= 0
      & \qquad & \text{ for all } \, w \in \Pkperp{k}(K),
      \label{eq:F1}\\
      \ip{z}{\varPi_K v - v}{K} &= 0
      & & \text{ for all } \, z \in P_{k-2}(K).
      \label{eq:F2}
    \end{alignat}
  \end{subequations}
  Moreover,
  \eqref{eq:F1} continues to hold for all $w \in \Pk(K)$.
  (For $k = 1$, condition \eqref{eq:F2} is vacuous.)
\end{lemma}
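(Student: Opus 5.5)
The plan is to write \eqref{eq:fortin-conditions} as a square linear system for the coefficients of $\varPi_K v$ in $V(K)$, and then show that the homogeneous system has only the trivial solution. Existence and uniqueness then follow together.

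\textbf{Well-posedness and squareness.} The right-hand sides $w \mapsto \ap{D_K w, v}$ and $z \mapsto \ip{z}{v}{K}$ are well-defined linear functionals for $v \in \W{q}{K}$, since $D_K$ is continuous on the graph spaces by H\"older's inequality. The number of equations is $\dim \Pkperp{k}(K) + \dim P_{k-2}(K)$. By Lemma~\ref{lem:extension} this equals $\dim\trc(\Pk(K)) + \dim P_{k-2}(K)$, which is $\dim V(K)$ by Lemma~\ref{lem:structure}. So the system is square, and it suffices to prove injectivity.

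\textbf{Injectivity.} Suppose $\phi = \bdr u + \bint z \in V(K)$, with $u \in \Pkperp{k}(K)$ and $z \in P_{k-2}(K)$, satisfies \eqref{eq:fortin-conditions} with $v = 0$.

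First test \eqref{eq:F1} with $w = u$. Since both functions are polynomials, \eqref{eq:DKbdry} applies, and $\bint$ vanishes on $\partial K$. Using \eqref{eq:bdr-pos} we get
\[
0 = \ap{D_K u, \phi} = \int_{\partial K} (\beta\cdot n)\, \bdr\, u^2 \, ds = \sum_{i=0}^N |\beta_K\cdot n_i| \int_{F_i} b_i u^2 \, ds .
\]
Every term is nonnegative, and $b_i > 0$ on $\intr{F}_i$. Hence $u$ vanishes on each non-characteristic facet. Lemma~\ref{lem:0-nonchar-facets} then gives $u = 0$.

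Next $\phi = \bint z$, and testing \eqref{eq:F2} with $z$ gives $\int_K \bint z^2 \, dx = 0$. Since $\bint > 0$ in the interior of $K$, this forces $z = 0$. This step is the only place where the hypotheses (Assumption~\ref{asm:theta} or $k=1$) enter, and they enter through Lemma~\ref{lem:0-nonchar-facets}.

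\textbf{Extension to all $w \in \Pk(K)$.} Split $w = \mathring w + w^\perp$ with $\mathring w \in \Pkint{k}(K)$ and $w^\perp \in \Pkperp{k}(K)$. The part $w^\perp$ is handled by \eqref{eq:F1}, so it remains to show that $\ap{D_K \mathring w, y} = 0$ for every $y \in \W{q}{K}$; apply this with $y = \varPi_K v - v$.

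This is the step I expect to be the main delicate point. The reason is that $y$ is only a graph-space function, so \eqref{eq:DKbdry} cannot be used directly; in particular, density of functions smooth up to the boundary is unavailable when $q = \infty$.

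I would instead approximate $\mathring w$, which is Lipschitz and vanishes on $\partial K$, by $\psi_n \in C_c^\infty(\intr K)$. The approximation should satisfy $\psi_n \to \mathring w$ and $\beta\cdot\nabla\psi_n \to \beta\cdot\nabla \mathring w$ in $L_p(K)$ for $1 < p < \infty$, and in $L_1(K)$ when $q = \infty$. Such $\psi_n$ can be obtained, for example, by cutting off near $\partial K$ and mollifying; this works because $\mathring w \in W^1_r(K)$ has zero trace for every $r$.

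For compactly supported $\psi_n$, the identity $(\beta\cdot\nabla\psi_n, y)_K + (\psi_n, \beta\cdot\nabla y)_K = 0$ is exactly the definition of the distributional derivative $\beta\cdot\nabla y$. Passing to the limit by H\"older's inequality yields $\ap{D_K \mathring w, y} = 0$, and hence \eqref{eq:F1} holds for all $w \in \Pk(K)$.
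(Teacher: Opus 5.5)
Your proposal is correct and follows the paper's proof essentially step for step. You get squareness from Lemmas~\ref{lem:extension} and~\ref{lem:structure}, obtain injectivity by testing \eqref{eq:F1} with $w=u$ and \eqref{eq:F2} with $z$, and extend to $\Pk(K)$ via the splitting $w=\mathring w+w^\perp$. The one difference is in that last step: the paper simply asserts $\ap{D_K\mathring w,\varPi_Kv-v}=0$ because $\mathring w$ vanishes on $\partial K$, whereas you justify it for a general graph-space function by approximating $\mathring w$ with $C^\infty_c(K)$ functions and using the definition of the distributional derivative, which is a valid and more careful way to make that step rigorous.
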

\begin{proof}
  The terms $\ap{D_K w,  v}$ and $\ip{z}{v}{K} $ are well defined
  when $v \in \W{q}{K}$ (and when $z, w$ are polynomials) due to \eqref{eq:DK}.
  By Lemmas~\ref{lem:extension} and \ref{lem:structure}, the number of
  (independent) conditions in \eqref{eq:fortin-conditions} equals
  $\dim \Pkperp{k}(K) + \dim P_{k-2}(K) = \dim V(K)$, so
  \eqref{eq:fortin-conditions} is a square linear system for
  $\varPi_K v$ and it suffices to prove uniqueness. Accordingly, we
  need to prove that any $\varphi = \varPi_K v \in V(K)$ satisfying
  \eqref{eq:fortin-conditions} with $v = 0$ must vanish.

  By Lemma~\ref{lem:structure} we may decompose
  \begin{equation*}
    \varphi = \bdr\, \vartheta + \bint\, \psi,
    \qquad \vartheta \in \Pkperp{k}(K), \quad \psi \in P_{k-2}(K),
  \end{equation*}
  uniquely. By~\eqref{eq:F1}, \eqref{eq:DKbdry} and \eqref{eq:bdr-pos},
  since $\bint$ vanishes on $\partial K$,
  \begin{equation*}
    0 = \ap{D_K w, \varphi}
    = \int_{\partial K} (\beta\cdot n)\, w \,\bdr\, \vartheta \,ds
    = \sum_{i=0}^N \int_{F_i} |\beta \cdot n|\; b_i\, w\, \vartheta \,ds,
  \end{equation*}
  for any $w \in \Pkperp{k}(K)$. Choose $w = \vartheta$. Since every
  summand is then nonnegative and $b_i > 0$ on $\intr{F}_i$, the trace
  of $\vartheta$ must vanish on every non-characteristic facet. By Lemma~\ref{lem:0-nonchar-facets},
  $\vartheta = 0$.  Thus $\varphi = \bint \psi$, and choosing the test function
  $z$ in \eqref{eq:F2} to be $\psi$ gives $( \bint \psi, \psi)_K = 0$, which
  implies that $\psi = 0$. Thus $\varphi = 0$.

  The final claim that \eqref{eq:F1} continues to hold for all
  $w \in \Pk(K)$ is seen by splitting $w \in \Pk(K)$ into
  $w = \mathring w + w^\perp$ with $\mathring w \in \Pkint{k}(K)$,
  $w^\perp \in \Pkperp{k}(K)$, and observing that
  \begin{equation*}
    \ap{D_K \mathring w, \varPi_K v - v} = 0,
  \end{equation*}
  because $\mathring w$ vanishes on $\partial K$.
\end{proof}

\begin{remark}[Failure at an exactly characteristic facet when $k=2$]
  \label{rem:fortin-fails}
  Suppose $N = 2$, $k = 2$, and suppose $\beta_K \cdot n_0 = 0$ for
  the facet $F_0$ of a triangle $K$ (so $s_0 = 0$), while $F_1, F_2$
  are non-characteristic with nonzero signs $s_1, s_2$.  Then clearly
  Assumption~\ref{asm:theta} does not hold.  We show that the result
  of Lemma~\ref{lem:fortin-def} cannot hold.  Note that
  $\bdr = s_1 b_1 + s_2 b_2 = \lambda_0 (s_1 \lambda_2 + s_2
  \lambda_1)$, since the $b_0 = \lambda_1 \lambda_2$ term is absent.
  Let $u := \lambda_1 \lambda_2$. Clearly $u$ is in
  $\Pkperp{2}(K) = P_2(K)$ as $k = 2 = N$ and
  $    \bdr\, u
    \,=\, \lambda_0 \lambda_1 \lambda_2\, (s_1 \lambda_2 + s_2 \lambda_1)
    \,=\, \bint\, (s_1 \lambda_2 + s_2 \lambda_1).
 $
  Because $\bint$ vanishes identically on $\partial K$, so does
  $\bdr\, u$. Hence $\ap{D_K w, \bdr\, u} = 0$ for every
  $w \in \Pkperp{2}(K)$, i.e., \eqref{eq:F1} holds trivially.
  Set
  $z_0 = (\bdr\, u, 1)_K / (\bint, 1)_K$. Then
  $
  \varphi \,:=\, \bdr\, u - \bint\, z_0
  = \bint\, (s_1 \lambda_2 + s_2 \lambda_1 - z_0)
  $
  has zero mean and, since $s_1 \lambda_2 + s_2 \lambda_1$ is linear
  but not constant, $\varphi$
  is a nonzero function in $V(K)$. Since $\varphi$
  vanishes on $\partial K$, \eqref{eq:F1} holds and since $\varphi$
  has zero mean, \eqref{eq:F2} holds too, showing that $\varPi_K$
  cannot be uniquely defined by~\eqref{eq:fortin-conditions}.
\end{remark}

\begin{definition}
  \label{def:fortin-global}
  Define $\varPi_h : \W{q}{\oh} \to V_h$ element-wise by
  $(\varPi_h v)|_K := \varPi_K (v|_K)$.
\end{definition}

\begin{lemma}[Fortin property]
  \label{lem:fortin-property}
  Suppose Assumption~\ref{asm:beta} holds, and that either
  Assumption~\ref{asm:theta} holds or $k = 1$. For all
  $v \in \W{q}{\oh}$
  and all $(w, \mu) \in P_{k-1}(\oh) \times \Xhk$,
  \begin{equation*}
    \bform\big( (w, \mu),\, \varPi_h v - v \big) = 0 .
  \end{equation*}
\end{lemma}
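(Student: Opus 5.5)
The plan is to reduce the global statement to an element-by-element check, using the local conditions~\eqref{eq:fortin-conditions} together with the final assertion of Lemma~\ref{lem:fortin-def} that \eqref{eq:F1} holds for all $w\in\Pk(K)$. Put $\delta := \varPi_h v - v \in \W{q}{\oh}$. Since $\bform$ is linear in its test argument, only the extended form matters (the one with the trace term written as $\ap{\Dh z_h,\cdot}$). Write $\mu = \Dh z_h$ with $z_h\in S_h^k$, as \eqref{eq:trial} allows. Then
\begin{equation*}
  \bform\big((w,\mu),\delta\big) = \sum_{K\in\oh}\Big[-\ip{w}{\beta\cdot\nabla \delta}{K} + \ap{D_K(z_h|_K),\delta|_K}\Big],
\end{equation*}
and I will show that each of the two terms vanishes separately on every $K$.

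For the trace term, note that $z_h|_K\in\Pk(K)$. The extended form of \eqref{eq:F1} from Lemma~\ref{lem:fortin-def} then gives $\ap{D_K(z_h|_K),\varPi_K v - v}=0$ directly. The one point to check is that the extended pairing in $\bform$ agrees with $D_K$ of \eqref{eq:DK}. This holds by construction: the boundary formula \eqref{eq:DKbdry} was only the smooth-function representation, and the extension \eqref{eq:extended-b} uses $\ap{\Dh z_h,\cdot}$ itself.

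For the volume term, I will use the definition \eqref{eq:DK} with $w|_K\in P_{k-1}(K)\subset\Pk(K)$. This gives
\begin{equation*}
  -\ip{w}{\beta\cdot\nabla\delta}{K} = \ip{\beta\cdot\nabla w}{\delta}{K} - \ap{D_K w,\delta}.
\end{equation*}
The last term vanishes, again by the extended \eqref{eq:F1}, since $w|_K\in\Pk(K)$. For the first term, $\beta_K$ is constant, so $\beta\cdot\nabla w\in P_{k-2}(K)$, and \eqref{eq:F2} makes it vanish. When $k=1$, $w$ is constant and $\beta\cdot\nabla w=0$, so the vacuous \eqref{eq:F2} causes no problem. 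Summing over $K$ then gives the claim.

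I do not expect a real obstacle here. The only delicate point is making sure the bilinear form with a general test function $v\in\W{q}{\oh}$ (not necessarily smooth) is interpreted through $D_K$, so that the integration-by-parts identity is exact. Because the statement is purely algebraic given Lemma~\ref{lem:fortin-def}, it needs no Assumption beyond what makes $\varPi_K$ well defined.
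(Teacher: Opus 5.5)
Your proposal is correct and follows essentially the same route as the paper. Both rewrite the volume term through \eqref{eq:DK}, kill $\ip{\beta\cdot\nabla w}{\delta}{K}$ with \eqref{eq:F2}, and dispose of the boundary pairings with the extended \eqref{eq:F1} from Lemma~\ref{lem:fortin-def}; the only cosmetic difference is that the paper applies \eqref{eq:F1} once to $z_h|_K - w|_K \in \Pk(K)$, while you apply it separately to $w|_K$ and $z_h|_K$.
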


\begin{proof}
  By \eqref{eq:trial}, $\mu=\Dh z_h$ for some $z_h\in S_h^k$.
  Let $\delta := \varPi_h v - v$ and fix $K \in \oh$. Since
  $w|_K \in P_{k-1}(K) \subset \Pk(K)$, the definition \eqref{eq:DK}
  gives
  \begin{equation*}
    -\ip{w}{\beta\cdot\nabla \delta}{K}
    = \ip{\beta\cdot\nabla w}{\delta}{K}
    - \ap{D_K w, \delta} .
  \end{equation*}
  Here $\beta\cdot\nabla w \in P_{k-2}(K)$ because $\beta_K$ is
  constant, so the first term vanishes by \eqref{eq:F2}. Combining
  this identity with \eqref{eq:trfunctional}, we obtain
  \begin{equation*}
    -\ip{w}{\beta\cdot\nabla \delta}{K}
    +
    \ap{D_K(z_h|_K),\delta}
    =
    \ap{D_K(z_h|_K-w|_K),\delta}.
  \end{equation*}
  The last term vanishes by Lemma~\ref{lem:fortin-def}, because
  $z_h|_K-w|_K\in\Pk(K)$. Summing over $K$ completes the proof.
\end{proof}

\section{Scaling estimates}
\label{sec:scaling}

This section develops the element-wise estimates needed to bound
$\varPi_K$. The strategy is to prove equivalences on the reference
simplex $\Khat$ by homogeneity and
compactness, treating the advection vector as a variable, and then to
transfer them to a physical element $K$ by the affine map
\eqref{eq:affinemap}, tracking how every quantity scales.

\subsection{Pullback conventions}
For $v$ defined on $K$ write $\hat v := v \circ \Phi_K$. The chain
rule gives $\hat\nabla \hat v = M_K^t (\nabla v)\circ \Phi_K$, whence
for any constant vector $\beta \in \R^N$,
\begin{equation}
  \label{eq:pullback-grad}
  (\beta \cdot \nabla v) \circ \Phi_K
  = \hat\beta \cdot \hat\nabla \hat v,
  \qquad
  \hat\beta := M_K^{-1} \beta.
\end{equation}
Since $\Phi_K$ maps vertices to vertices,
$\lambda_i \circ \Phi_K = \hat \lambda_i$, so the bubbles satisfy
$b_i \circ \Phi_K = \hat b_i$ and $\bint \circ \Phi_K = \hat \bint$,
where $\hat b_i$ and $\hat \bint$ are defined by
\eqref{eq:bubbles} using $\hat\lambda_i$ in place of $\lambda_i$.
Moreover, because the
Jacobian of $\Phi_K$ is constant, pullback preserves $L_2$
orthogonality, so
\begin{equation}
  \label{eq:pullback-spaces}
  \{ \hat u = u \circ \Phi_K : u \in \Pkint{k}(K)\} = \Pkint{k}(\Khat),
  \qquad
  \{ \hat u = u \circ \Phi_K:  u \in \Pkperp{k}(K)\} = \Pkperp{k}(\Khat).
\end{equation}

\begin{lemma}[Sign pattern and non-characteristicity are affine
  invariants]
  \label{lem:signs}
  Let $\hat n_i$ and $n_i$ denote the outward unit normals of
  $\Khat$ and $K$ on corresponding facets, and let
  $\beta \in \R^N \setminus \{0\}$ and
  $\hat\beta = M_K^{-1}\beta$. Then
  the sign vector of~\eqref{eq:signs} is mapped exactly,
  \begin{equation*}
    \sgn ( \hat\beta \cdot \hat n_i ) = \sgn (\beta \cdot n_i),
    \qquad \text{i.e.,} \qquad
    s(\hat\beta) = s(\beta),
  \end{equation*}
    and if
  $|\beta \cdot n_i| \ge \theta_0 |\beta|$ for all $i$, then
  \begin{equation*}
    | \hat\beta \cdot \hat n_i |
    \,\ge\, \frac{\theta_0}{\kappa_K}\, |\hat\beta|
  \end{equation*}
  for all $i$, where $\kappa_K$ is as in \eqref{eq:shapereg}.
\end{lemma}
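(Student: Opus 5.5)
The plan is to relate the normals of $K$ and $\Khat$ through the affine map and then compare inner products. Since $\Phi_K$ is affine with matrix $M_K$, a facet $\hat F_i$ of $\Khat$ is mapped to $F_i$, and tangent vectors transform by $M_K$. Hence a vector normal to $\hat F_i$ is carried to a normal of $F_i$ by $M_K^{-t}$. Concretely, I would show
\begin{equation*}
  n_i = \frac{M_K^{-t}\hat n_i}{|M_K^{-t}\hat n_i|}.
\end{equation*}
For the orientation, I would use barycentric coordinates. The outward normal on $F_i$ is $-\nabla\lambda_i/|\nabla\lambda_i|$, because $\lambda_i$ vanishes on $F_i$ and is positive inside. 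The chain rule applied to $\lambda_i\circ\Phi_K=\hat\lambda_i$ gives $\hat\nabla\hat\lambda_i = M_K^t\nabla\lambda_i$, that is, $\nabla\lambda_i = M_K^{-t}\hat\nabla\hat\lambda_i$. Both outward normals are negative multiples of the respective gradients, so the displayed formula holds with the correct sign.

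Next I would compute the inner product:
\begin{equation*}
  \beta\cdot n_i = \frac{\beta\cdot M_K^{-t}\hat n_i}{|M_K^{-t}\hat n_i|}
  = \frac{(M_K^{-1}\beta)\cdot\hat n_i}{|M_K^{-t}\hat n_i|}
  = \frac{\hat\beta\cdot\hat n_i}{|M_K^{-t}\hat n_i|}.
\end{equation*}
The denominator is positive, so the signs agree and $s(\hat\beta)=s(\beta)$.

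For the quantitative bound, I would rearrange the identity as
\begin{equation*}
  |\hat\beta\cdot\hat n_i| = |M_K^{-t}\hat n_i|\,|\beta\cdot n_i| \ge |M_K^{-t}\hat n_i|\,\theta_0|\beta|.
\end{equation*}
To bound this from below, I would use $1=|\hat n_i| = |M_K^t M_K^{-t}\hat n_i|\le |M_K|\,|M_K^{-t}\hat n_i|$, which gives $|M_K^{-t}\hat n_i|\ge 1/|M_K|$. I would also use $|\hat\beta| = |M_K^{-1}\beta|\le |M_K^{-1}|\,|\beta|$, so $|\beta|\ge |\hat\beta|/|M_K^{-1}|$. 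Here the operator norms of a matrix and its transpose coincide. Combining the two bounds yields $|\hat\beta\cdot\hat n_i|\ge \theta_0|\hat\beta|/(|M_K|\,|M_K^{-1}|)=\theta_0|\hat\beta|/\kappa_K$.

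The only step needing care is the orientation of the transformed normal, meaning that $M_K^{-t}$ maps outward normals to outward rather than inward ones. The barycentric-gradient argument settles this, and the rest is routine norm manipulation.
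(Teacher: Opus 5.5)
Your proof is correct and follows essentially the same route as the paper: both fix orientation through $n_i=-\nabla\lambda_i/|\nabla\lambda_i|$ and the chain rule $\hat\nabla\hat\lambda_i=M_K^t\nabla\lambda_i$. Your factor $|M_K^{-t}\hat n_i|$ is exactly the paper's $|\nabla\lambda_i|/|\hat\nabla\hat\lambda_i|$, and your bound $|M_K^{-t}\hat n_i|\ge 1/|M_K|$ is the paper's $|\hat\nabla\hat\lambda_i|\le|M_K|\,|\nabla\lambda_i|$, phrased through the transformed normal instead of the scalar identity $\beta\cdot\nabla\lambda_i=\hat\beta\cdot\hat\nabla\hat\lambda_i$.
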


\begin{proof}
  Since $\lambda_i$ is affine, vanishes on $F_i$, and equals $1$ at
  $a_i$, its (constant) gradient $\nabla\lambda_i$ points from $F_i$
  toward $a_i$, so the outward unit normal is
  $n_i = -\nabla\lambda_i / |\nabla\lambda_i|$. Likewise
  $\hat n_i = -\hat\nabla\hat\lambda_i / |\hat\nabla\hat\lambda_i|$ on
  $\Khat$. Applying \eqref{eq:pullback-grad} to the affine function
  $v = \lambda_i$ (whose gradient is constant, so composing with
  $\Phi_K$ has no effect) gives the equality of constants
  $\beta \cdot \nabla\lambda_i = \hat\beta \cdot \hat\nabla\hat\lambda_i$.
  Hence
  \begin{equation}
    \label{eq:normal-transform}
    \hat\beta \cdot \hat n_i
    \,=\, - \frac{\hat\beta \cdot \hat\nabla \hat\lambda_i}
    {|\hat\nabla\hat\lambda_i|}
    \,=\, - \frac{\beta \cdot \nabla\lambda_i}{|\hat\nabla\hat\lambda_i|}
    \,=\, \frac{|\nabla\lambda_i|}{|\hat\nabla\hat\lambda_i|}\,
    (\beta \cdot n_i) ,
  \end{equation}
  a positive multiple of $\beta \cdot n_i$, which  proves the sign
  assertion.
  For the second claim, the chain rule
  $\hat\nabla\hat\lambda_i = M_K^t \nabla\lambda_i$ gives
  $|\hat\nabla\hat\lambda_i| \le |M_K| \, |\nabla\lambda_i|$, so
  \eqref{eq:normal-transform} yields
  \begin{equation*}
    |\hat\beta \cdot \hat n_i|
    \ge \frac{|\beta \cdot n_i|}{|M_K|}
    \ge \frac{\theta_0 |\beta|}{|M_K|}
    \ge \frac{\theta_0\, |\hat\beta|}{|M_K| |M_K^{-1}|}
    = \frac{\theta_0}{\kappa_K} |\hat\beta|,
  \end{equation*}
  using $|\hat\beta| \le |M_K^{-1}| |\beta|$.
\end{proof}

\subsection{Parametrization of bubbles}

Throughout this subsection, let $s$ be a sign vector in $\Sc$
(see Definition~\ref{def:testspace}), and let $\bdr$ and $\hbdr$ be
as in \eqref{eq:bdrbubble} on $K$ and $\Khat$, respectively. Fix once and for all a basis
$\hat\psi_1, \dots, \hat\psi_m$ of $\Pkperp{k}(\Khat)$, where
\begin{equation*}
  m := \dim \Pkperp{k}(\Khat) = \dim \trc(\Pk(\Khat))
  = \binom{N+k}{N} - \binom{k-1}{N},
\end{equation*}
and let $\psi_i := \hat\psi_i \circ \Phi_K^{-1} \in \Pkperp{k}(K)$
(see \eqref{eq:pullback-spaces}). For $c \in \R^m$ define
\begin{equation}
  \label{eq:param}
  w_K(c) := \sum_{i=1}^m c_i \psi_i \in \Pkperp{k}(K),
  \qquad
  \varphi_K(c,s) := \bdr \, w_K(c) \in \bdr \Pkperp{k}(K)
  \subset V_s(K),
\end{equation}
and analogously $w_{\Khat}(c)$, $\varphi_{\Khat}(c,s)$ on $\Khat$, so
that $w_K(c) \circ \Phi_K = w_{\Khat}(c)$ and
$\varphi_K(c,s) \circ \Phi_K = \varphi_{\Khat}(c,s)$. For
$1 \le p \le \infty$ and a constant vector $\beta \ne 0$ with
$s(\beta) = s$ (see \eqref{eq:signs}), define
\begin{subequations}
  \label{eq:Sfd}
  \begin{align}
    S_{p,K}(\beta, c, s) &:=
    \nrmK{\beta \cdot \nabla \varphi_K(c, s)}{p}{K},
    &
    \tilde S_{p,K}(\beta, c) &:=
    \nrmK{\beta \cdot \nabla w_K(c)}{p}{K},
    \label{eq:Sdef}
    \\
    f_{p,K}(c, s) &:= \nrmK{\varphi_K(c, s)}{p}{K},
    &
    \tilde f_{p,K}(c) &:= \nrmK{w_K(c)}{p}{K},
    \label{eq:fdef}
    \\
    d_K(\beta, c) &:=
    \rlap{$\displaystyle\sum_{i=0}^N \int_{F_i} |\beta\cdot n| \;
      b_i \, w_K(c)^2 \, ds . $}
    \label{eq:ddef}
  \end{align}
\end{subequations}
By \eqref{eq:bdr-pos} and \eqref{eq:DKbdry},
using $\varphi_K(c, s)$ with  $s = s(\beta)$,
\begin{equation}
  \label{eq:d-identity}
  d_K(\beta, c)
  = \int_{\partial K} (\beta \cdot n)\, w_K(c)\, \varphi_K(c, s(\beta))\, ds
  = \ap{D_K w_K(c), \varphi_K(c, s(\beta))} .
\end{equation}

\begin{lemma}[Reference element estimates]
  \label{lem:equiv}
  Let $1 \le p \le \infty$. There exist positive constants
  $C_0, C_1, C_2$ depending only on $N, k, p$, and a positive constant
  $C_0^\theta$ depending on $N, k, p$ and additionally on $\theta$, all
  independent of $s$, such that for all $c \in \R^m$ and all
  $\beta \in \R^N\setminus\{0\}$ with $s(\beta) = s$, the following
  statements hold:
  \begin{enumerate}
  \item On the reference element $\hat K$,
    \begin{subequations}
      \label{eq:C1C2}
      \begin{gather}
        S_{p,\Khat}(\beta, c, s) + \tilde S_{p,\Khat}(\beta, c)
        \le C_2\, |\beta|\, |c| ,
        \label{eq:S-upper}
        \\
        C_1 |c| \,\le\, f_{p,\Khat}(c, s) \le C_2 |c|,
        \qquad
        C_1 |c| \,\le\, \tilde f_{p,\Khat}(c) \le C_2 |c| ,
        \label{eq:f-equiv}
        \\
        d_{\Khat}(\beta, c) \le C_2\, |\beta|\, |c|^2.
        \label{eq:d-upper}
      \end{gather}
    \end{subequations}
  \item If $k=1$, then
    \begin{equation}
      \label{eq:d-lower}
      C_0\, |\beta| \, |c|^2 \,\le\, d_{\Khat}(\beta, c).
    \end{equation}
  \item If there is a $\theta \in (0,1]$ such that
    $|\beta \cdot \hat n_i| \ge \theta |\beta|$ for all $i$, then
    \begin{equation}
      \label{eq:d-lower-theta}
      C_0^\theta\, |\beta| \, |c|^2 \,\le\, d_{\Khat}(\beta, c).
    \end{equation}
  \end{enumerate}
\end{lemma}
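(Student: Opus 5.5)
The plan is to use homogeneity and compactness on the reference element, treating $c$ and the direction of $\beta$ as variables. Since $\Sc$ is a finite set, it suffices to prove each bound for a fixed $s$ and then take the minimum of lower constants and the maximum of upper constants over $s\in\Sc$. Every quantity is homogeneous in $(\beta,c)$: $S$ and $\tilde S$ are of degree one in $\beta$ and in $c$, $f$ and $\tilde f$ of degree one in $c$, and $d$ of degree one in $\beta$ and two in $c$. We may therefore restrict to $|c|=1$ and $|\beta|=1$.

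\emph{Upper bounds.} The maps $c\mapsto \hat\beta\cdot\hat\nabla \varphi_{\Khat}(c,s)$ and $c\mapsto \varphi_{\Khat}(c,s)$ are linear from $\R^m$ into the finite-dimensional space $P_{k+N}(\Khat)$, and all norms there are equivalent. Hence $S_{p,\Khat}\le C|\beta||c|$, with $C$ the operator norm of $\beta\mapsto \beta\cdot\hat\nabla$ on polynomials; the same argument gives $\tilde S$, $f$ and $\tilde f$. For $d$, we use $|\beta\cdot \hat n|\le|\beta|$ and $0\le \hat b_i\le1$, and bound $\int_{F_i} w^2$ by equivalence of norms on $P_k$.

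\emph{Lower bounds in \eqref{eq:f-equiv}.} The map $c\mapsto w_{\Khat}(c)$ is injective because the $\hat\psi_i$ form a basis. Injectivity of $c\mapsto \hbdr w_{\Khat}(c)$ follows from \eqref{eq:bdr-nonzero}, since $\hbdr\not\equiv0$ and polynomial multiplication by a nonzero polynomial is injective. Thus $c\mapsto f_{p,\Khat}(c,s)$ is a norm on $\R^m$, and the lower bound follows from the equivalence of norms on $\R^m$ (equivalently, continuity and positivity on the unit sphere).

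\emph{Lower bounds for $d$; the main obstacle.} Here $\beta$ enters nonlinearly through $|\beta\cdot\hat n_i|$, and the set of admissible $\beta$ (those with $s(\beta)=s$ and $|\beta|=1$) is not compact. Cases (2) and (3) must therefore be handled separately.

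In case (3), consider the compact set $\{(\beta,c): |\beta|=|c|=1,\ |\beta\cdot\hat n_i|\ge\theta \ \forall i\}$. The function $d_{\Khat}$ is continuous there. If $d_{\Khat}(\beta,c)=0$, then, since every $|\beta\cdot \hat n_i|>0$ and $\hat b_i>0$ on $\intr{F}_i$, $w_{\Khat}(c)$ vanishes on all of $\partial\Khat$. Then $w\in\Pkint{k}\cap\Pkperp{k}=\{0\}$, forcing $c=0$, which is a contradiction. The minimum $C_0^\theta$ is therefore positive, and it is independent of $s$ because we take the minimum over the finitely many $s$.

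In case (2), with $k=1$, no $\theta$ is available and characteristic facets are allowed, so I would avoid compactness in $\beta$ altogether. I would bound $d_{\Khat}(\beta,c)\ge \sum_i |\beta\cdot\hat n_i|\,\mu_i(c)$ with $\mu_i(c):=\int_{F_i}\hat b_i w^2$. The key claim is that $\sum_i|\beta\cdot\hat n_i|\ge c_N|\beta|$, because the normals span $\R^N$ and the sum is a norm in $\beta$. But a single facet need not control $|c|$, so this alone is not enough. I would instead use that $w$ is affine and that at least two facets are non-characteristic (Lemma~\ref{lem:two-noncharacteristic}). The plan is to show that for every $\beta$ with $|\beta|=1$, $\max_{i\ne j}\min(|\beta\cdot\hat n_i|,|\beta\cdot\hat n_j|)$ is not uniformly bounded below, which means a direct argument is needed.

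That direct argument uses $\sum_i |\hat F_i|\,\hat n_i=0$, so $\sum_i|\hat F_i|(\beta\cdot\hat n_i)=0$. Hence the total inflow equals the total outflow and each is at least $c|\beta|$. Pick $i$ on the inflow side and $j$ on the outflow side. Both weights are bounded below in an aggregated sense, and for an affine $w$ vanishing on two distinct facets we get $w=0$. Quantitatively, I would show that $\min_{i\ne j}(\mu_i+\mu_j)\ge c|c|^2$, by compactness in $c$ alone, since affine functions vanishing on two facets are zero. Combining,
\[
d\ \ge\ \Big(\textstyle\sum_{\text{in}}|\beta\cdot\hat n_i|\Big)\min_{\text{in}}\mu_i+\Big(\sum_{\text{out}}|\beta\cdot\hat n_j|\Big)\min_{\text{out}}\mu_j\ \gtrsim\ |\beta|\,\min_{i\ne j}(\mu_i+\mu_j)\ \gtrsim\ |\beta||c|^2,
\]
with constants depending only on $N,k,p$.
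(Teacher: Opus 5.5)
Your proposal is correct. Parts (1) and (3) follow the paper's proof essentially verbatim: reduction to finitely many $s\in\Sc$, homogeneity, norm equivalence for $f,\tilde f$, and compactness of $\{|\beta|=|c|=1,\ |\beta\cdot\hat n_i|\ge\theta\}$.

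For part (2) you take a different route. The paper observes that $d_{\Khat}(\beta,c)=\sum_i|\beta\cdot\hat n_i|\int_{\hat F_i}\hat b_i\,w_{\Khat}(c)^2$ does not involve $s$ at all. It is continuous in $\beta$ on the whole unit sphere, since the absolute value is continuous across sign changes. So the paper reruns the same compactness argument on $\{|\beta|=1\}\times\{|c|=1\}$. Positivity there comes from Lemma~\ref{lem:two-noncharacteristic} together with the fact that an affine function vanishing on two facets is zero. The non-compactness of $\{\beta:|\beta|=1,\ s(\beta)=s\}$, which you call the main obstacle, is therefore no obstacle.

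Your flux-balance argument is nonetheless valid. The identity $\sum_i|\hat F_i|\hat n_i=0$ forces the inflow and outflow totals to each be $\gtrsim|\beta|$. Compactness in $c$ alone then bounds $\mu_i+\mu_j$ below for each of the finitely many pairs $i\ne j$. This has the merit of separating the $\beta$- and $c$-dependence, which makes $C_0$ more explicit, but it is longer than needed.

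One sentence in your writeup is false, though you never use it. The quantity $\max_{i\ne j}\min(|\beta\cdot\hat n_i|,|\beta\cdot\hat n_j|)$, i.e.\ the second largest of the $|\beta\cdot\hat n_i|$, \emph{is} uniformly bounded below by a multiple of $|\beta|$. It is continuous on the compact unit sphere and positive there by Lemma~\ref{lem:two-noncharacteristic}. Your own computation also shows this: it produces one inflow facet and a distinct outflow facet, each with $|\beta\cdot\hat n|\gtrsim|\beta|$. Using this fact directly gives an even shorter proof, namely $d_{\Khat}\ge\min(|\beta\cdot\hat n_i|,|\beta\cdot\hat n_j|)\,(\mu_i+\mu_j)$ for the two largest indices $i,j$.
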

\begin{proof}
  It suffices to prove~\eqref{eq:C1C2} with the constants $C_1(s)$ and
  $C_2(s)$ depending on~$s$ for the $s$-dependent quantities. Since $s$
  ranges over the finite subset $\Sc$ (of Definition~\ref{def:testspace})
  the same inequalities then continue to
  hold with $C_2 = \max_{s \in \Sc}C_2(s)$ in place of $C_2(s)$ and
  $C_1 = \min_{s \in \Sc} C_1(s)$, respectively. (The constants
  in~\eqref{eq:d-lower} and~\eqref{eq:d-lower-theta} are obviously
  $s$-independent since those
  inequalities govern $s$-independent quantities.)
  
  All quantities are positively homogeneous in $|\beta|$ and $|c|$:
  indeed $c \mapsto w_{\Khat}(c)$ and $c \mapsto \varphi_{\Khat}(c,s)$
  are linear, and
  \eqref{eq:ddef} is homogeneous of degree~$1$ in $|\beta|$ (as seen
  by writing
  $|\beta\cdot \hat n| = |\beta| \, |(\beta/|\beta|) \cdot \hat n|$)
  and of degree~$2$ in $c$. Hence it
  suffices to prove all bounds for $|\beta| = |c| = 1$.
  The upper bounds follow because each quantity is a continuous
  function of $(\beta, c)$ on the compact set
  $\{|\beta| = 1\} \times \{|c| = 1\}$.

  For the lower bounds, start with~\eqref{eq:f-equiv}. Observe that
  $f_{p,\Khat}$ and $\tilde f_{p,\Khat}$ are norms of linear images of
  $c$, so $\tilde f_{p,\Khat}(c)$ vanishes only if $w_{\Khat}(c) = 0$,
  i.e.\ only if $c = 0$, since the $\hat\psi_i$ form a
  basis. Similarly, $f_{p,\Khat}(c, s)$ vanishes only if
  $\hbdr\, w_{\Khat}(c) = 0$, which forces $w_{\Khat}(c) = 0$ and
  $c=0$. Both $f_{p,\Khat}(c, s)$ and $\tilde f_{p,\Khat}(c)$ are therefore
  norms on $\R^m$, equivalent to $|c|$, with constants depending only
  on $(N, k, p)$ and the fixed basis.

  For \eqref{eq:d-lower-theta}, consider the set
  \begin{equation*}
    A_\theta := \big\{ (\beta, c) : |\beta| = 1, \ |c| = 1, \
    |\beta \cdot \hat n_i| \ge \theta \ \text{ for } i = 0,\dots,N
    \big\},
  \end{equation*}
  which is compact. The
  function $d_{\Khat}$ is obviously continuous on $A_\theta$. It is also
  positive there. Indeed, each summand of \eqref{eq:ddef}
  defining $d_{\Khat}$ is nonnegative, so
  if all of them vanish, then, since
  $|\beta \cdot \hat n| \ge \theta > 0$ on each facet and
  $\hat b_i > 0$ on $\intr{\hat F}_i$, the trace of
  $w_{\Khat}(c)$
  vanishes on every facet, thus making
  $w_{\Khat}(c) \in \Pkint{k}(\Khat) \cap \Pkperp{k}(\Khat) = \{0\}$,
  contradicting $|c| = 1$. The minimum of $d_{\Khat}$ over $A_\theta$
  is attained and positive; call it $C_0^\theta$.

  For~\eqref{eq:d-lower}, we apply the same argument noting that when
  $k = 1$ we may use the larger compact set
  $A := \{ (\beta, c) : |\beta| = 1, \ |c| = 1 \}$.  If
  $d_{\Khat}(\beta,c) = 0$, then the trace of the affine function
  $w_{\Khat}(c)$ vanishes on every facet with
  $\beta \cdot \hat n_i \neq 0$ (of which there are at least two, by
  Lemma~\ref{lem:two-noncharacteristic}) and an affine function
  vanishing on two distinct facets is identically zero, contradicting
  $|c| = 1$. Hence for $k = 1$ the minimum of $d_{\Khat}$ over all of
  $A$, named $C_0$, is positive (and $C_0$ is $\theta$-free).
\end{proof}

\begin{remark}
  No lower bound for $S_{p,\Khat}$ of the form
  $C|\beta||c| \le  S_{p,\Khat}(\beta,c,s)$ is claimed in
  Lemma~\ref{lem:equiv}. For $k \ge 2$ such a bound is false in
  general, since $\beta\cdot\nabla\varphi_{\Khat}(c,s)$ can vanish for
  $c \neq 0$ when $\varphi_{\Khat}(c,s)$ is constant along
  streamlines. It is
  also never needed in our analysis.
\end{remark}

\begin{lemma}[Scaling identities]
  \label{lem:scaling}
  Let $1 \le p \le \infty$ (with the convention $1/\infty = 0$),
  $c \in \R^m$, and let $\beta \neq 0$ satisfy $s(\beta) = s$ on $K$.
  Then, with $\hat\beta = M_K^{-1}\beta$,
  \begin{subequations}
    \begin{align}
      S_{p,K}(\beta, c, s)
      &= \Big( \tfrac{|K|}{|\Khat|} \Big)^{1/p}
      S_{p,\Khat}(\hat\beta, c, s),
      &
      \tilde S_{p,K}(\beta, c)
      &= \Big( \tfrac{|K|}{|\Khat|} \Big)^{1/p}
      \tilde S_{p,\Khat}(\hat\beta, c),
      \label{eq:S-scaling}
      \\
      f_{p,K}(c,s)
      &= \Big( \tfrac{|K|}{|\Khat|} \Big)^{1/p}
      f_{p,\Khat}(c,s),
      &
      \tilde f_{p,K}(c)
      &= \Big( \tfrac{|K|}{|\Khat|} \Big)^{1/p}
      \tilde f_{p,\Khat}(c),
      \label{eq:f-scaling}
    \end{align}
    \begin{equation}
      d_K(\beta, c)
      = \frac{|K|}{|\Khat|} \; d_{\Khat}(\hat\beta, c) .
      \label{eq:d-scaling}
    \end{equation}
  \end{subequations}
\end{lemma}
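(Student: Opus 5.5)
The plan is a direct change of variables $x = \Phi_K(\hat x)$, using three facts: the pullback rule \eqref{eq:pullback-grad}, the identities $w_K(c)\circ\Phi_K = w_{\Khat}(c)$ and $\varphi_K(c,s)\circ\Phi_K = \varphi_{\Khat}(c,s)$ from \eqref{eq:param}, and the constant Jacobian $|\det M_K| = |K|/|\Khat|$.

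For \eqref{eq:f-scaling} with $p<\infty$, substituting $x=\Phi_K(\hat x)$ gives
$\int_K |\varphi_K(c,s)|^p\,dx = \frac{|K|}{|\Khat|}\int_{\Khat}|\varphi_{\Khat}(c,s)|^p\,d\hat x$.
Taking $p$-th roots yields the claim, and the identical computation handles $\tilde f$. For $p=\infty$ the sup norm is invariant under composition with the homeomorphism $\Phi_K$, which matches the factor $(|K|/|\Khat|)^0=1$.

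For \eqref{eq:S-scaling}, \eqref{eq:pullback-grad} gives $(\beta\cdot\nabla\varphi_K(c,s))\circ\Phi_K = \hat\beta\cdot\hat\nabla\varphi_{\Khat}(c,s)$ and likewise for $w_K(c)$. The same volume change of variables then applies. By Lemma~\ref{lem:signs}, $s(\hat\beta)=s(\beta)=s$, so the reference quantities are evaluated at an admissible pair $(\hat\beta,s)$.

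The only step with real content is \eqref{eq:d-scaling}, because it involves facet integrals, and facet measures scale differently from volume. I would handle it facet by facet. On $F_i$, write $ds = \frac{|F_i|}{|\hat F_i|}\,d\hat s$ (the restriction of $\Phi_K$ to $\hat F_i$ is affine) and use $b_i\circ\Phi_K=\hat b_i$. This gives
$\int_{F_i}|\beta\cdot n_i|\,b_i\,w_K(c)^2\,ds = |\beta\cdot n_i|\,\frac{|F_i|}{|\hat F_i|}\int_{\hat F_i}\hat b_i\,w_{\Khat}(c)^2\,d\hat s$.
It then remains to show
$|\beta\cdot n_i|\,|F_i|/|\hat F_i| = \frac{|K|}{|\Khat|}\,|\hat\beta\cdot\hat n_i|$.
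Equation~\eqref{eq:normal-transform} gives $|\hat\beta\cdot\hat n_i| = \frac{|\nabla\lambda_i|}{|\hat\nabla\hat\lambda_i|}\,|\beta\cdot n_i|$. Since $\lambda_i$ equals $1$ at $a_i$ and $0$ on $F_i$, we have $|\nabla\lambda_i| = 1/\mathrm{dist}(a_i,F_i)$, and the simplex volume formula gives $|K| = |F_i|\,\mathrm{dist}(a_i,F_i)/N$. Hence $|\nabla\lambda_i| = |F_i|/(N|K|)$, and similarly $|\hat\nabla\hat\lambda_i| = |\hat F_i|/(N|\Khat|)$. Their ratio is $\frac{|F_i|\,|\Khat|}{|\hat F_i|\,|K|}$, which is exactly the required identity. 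Summing over $i$ completes \eqref{eq:d-scaling}.

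A shortcut is available when $s(\beta)$ has no zero entries: \eqref{eq:d-identity} together with the volume form \eqref{eq:DK} of $D_K$ would let the volume change of variables do all the work. However, $d$ is defined with $|\beta\cdot n|$ even on characteristic facets (where it contributes zero), so the facet-by-facet argument above is cleaner and covers all cases uniformly.
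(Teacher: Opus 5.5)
Your proof is correct. For \eqref{eq:f-scaling} and \eqref{eq:S-scaling} it is the same as the paper's, but for \eqref{eq:d-scaling} you take a different route.

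\emph{The paper's route.} It never touches facet measures. It uses \eqref{eq:d-identity} to write $d_K(\beta,c)=\ap{D_K w_K(c),\varphi_K(c,s)}$, and expands this via \eqref{eq:DK} into the two volume integrals $\ip{\beta\cdot\nabla w_K(c)}{\varphi_K(c,s)}{K}+\ip{w_K(c)}{\beta\cdot\nabla\varphi_K(c,s)}{K}$. It then changes variables with the constant volume Jacobian and \eqref{eq:pullback-grad}. Finally it reads \eqref{eq:d-identity} backwards on $\Khat$, which is legitimate because $s(\hat\beta)=s$ by Lemma~\ref{lem:signs}.

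\emph{Your route.} You scale each facet integral directly by $|F_i|/|\hat F_i|$. You then close the argument by inserting $|\nabla\lambda_i|=|F_i|/(N|K|)$ into \eqref{eq:normal-transform}. Every step checks out.

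\emph{What each buys.} The paper's argument is shorter and reuses the integration-by-parts structure already in place. Yours is more elementary and does not involve $\bdr$ or integration by parts at all. It also gives the slightly stronger fact that each facet term scales individually by $|K|/|\Khat|$.

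One correction to your closing remark: the shortcut is not restricted to sign vectors without zero entries. On a characteristic facet both sides of \eqref{eq:bdr-pos} vanish. Hence \eqref{eq:d-identity} holds for every $\beta\neq0$ with $s=s(\beta)$, and this is exactly how the paper covers characteristic facets without any case distinction.
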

\begin{proof}
  Since $\varphi_K(c,s) \circ \Phi_K = \varphi_{\Khat}(c,s)$ and
  $w_K(c) \circ \Phi_K = w_{\Khat}(c)$, the identities
  \eqref{eq:S-scaling}--\eqref{eq:f-scaling} follow from
  \eqref{eq:pullback-grad} and the change of variables formula (for
  $p = \infty$, suprema are invariant under composition with
  $\Phi_K$). For \eqref{eq:d-scaling} we use the representation
  \eqref{eq:d-identity} with $s = s(\beta)$ and integrate by parts
  using~\eqref{eq:DK}:
  \begin{align*}
    d_K(\beta, c)
    &= \ip{\beta\cdot\nabla w_K(c)}{\varphi_K(c,s)}{K}
    + \ip{w_K(c)}{\beta\cdot\nabla \varphi_K(c,s)}{K}
    \\
    &= \frac{|K|}{|\Khat|} \Big[
    \ip{\hat\beta\cdot\hat\nabla w_{\Khat}(c)}
       {\varphi_{\Khat}(c,s)}{\Khat}
    + \ip{w_{\Khat}(c)}
         {\hat\beta\cdot\hat\nabla \varphi_{\Khat}(c,s)}{\Khat}
    \Big]
    = \frac{|K|}{|\Khat|}\, d_{\Khat}(\hat\beta, c),
  \end{align*}
  where the last equality is \eqref{eq:d-identity} on $\Khat$, valid
  because $s(\hat\beta) = s(\beta) = s$ on $\Khat$ by
  Lemma~\ref{lem:signs}.
\end{proof}

\begin{lemma}[Interior bubble estimate]
  \label{lem:bubble}
  Let $j\ge0$, let $1\le r<\infty$, and let $r'$ be the conjugate
  exponent. There is a $C=C(N,j,r)$ such that for every simplex $K$
  and every $z\in P_j(K)$,
  \begin{equation*}
    \nrmK{\bint z}{r}{K}\;\nrmK{z}{r'}{K}
    \,\le\, C \ip{z}{\bint z}{K} .
  \end{equation*}
\end{lemma}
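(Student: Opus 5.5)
The plan is to transfer to the reference simplex and use finite-dimensionality, homogeneity and compactness, in the same way as Lemma~\ref{lem:equiv}.

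\emph{Scaling.} Write $\hat z := z\circ\Phi_K \in P_j(\Khat)$ and recall that $\bint\circ\Phi_K = \hat\bint$. The change of variables formula gives
\begin{equation*}
  \nrmK{\bint z}{r}{K} = \Big(\tfrac{|K|}{|\Khat|}\Big)^{1/r}\nrmK{\hat\bint \hat z}{r}{\Khat},
  \qquad
  \nrmK{z}{r'}{K} = \Big(\tfrac{|K|}{|\Khat|}\Big)^{1/r'}\nrmK{\hat z}{r'}{\Khat},
\end{equation*}
with the convention $1/r' = 0$ when $r=1$, $r'=\infty$. It also gives
\begin{equation*}
  \ip{z}{\bint z}{K} = \tfrac{|K|}{|\Khat|}\,\ip{\hat z}{\hat\bint\hat z}{\Khat}.
\end{equation*}
Since $1/r+1/r'=1$, the factors $|K|/|\Khat|$ on the two sides match exactly. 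The inequality on $K$ is therefore equivalent to the same inequality on $\Khat$ with the same constant. This is why $C$ is independent of $K$, and no shape regularity is needed.

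\emph{Reference estimate.} On $\Khat$, both sides are homogeneous of degree $2$ in $\hat z$, so it suffices to take $\hat z$ on the unit sphere $\Sigma$ of the finite-dimensional space $P_j(\Khat)$, in any fixed norm.
\begin{itemize}
  \item The left side $\nrmK{\hat\bint\hat z}{r}{\Khat}\nrmK{\hat z}{r'}{\Khat}$ is continuous on $\Sigma$. By compactness it is bounded above by some $C_2(N,j,r)$.
  \item The right side $\ip{\hat z}{\hat\bint\hat z}{\Khat}=\int_{\Khat}\hat\bint\,\hat z^2$ is continuous on $\Sigma$.
  \item It is strictly positive on $\Sigma$. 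Indeed, $\hat\bint>0$ in the interior of $\Khat$, so the integral vanishes only if $\hat z\equiv 0$ on an open set, which forces the polynomial $\hat z$ to vanish.
  \item Its minimum $C_1(N,j,r)>0$ over $\Sigma$ is therefore attained.
\end{itemize}
Taking $C=C_2/C_1$ then proves the lemma.

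The only point needing care is the exponent bookkeeping when $r=1$ and $r'=\infty$. The $L_\infty$ norm is invariant under composition with $\Phi_K$, so the scaling factors are $|K|^1$ on the left and $|K|$ on the right, which still match. The case $r=\infty$ is excluded in the statement, although it would behave symmetrically. There is no genuine obstacle; the argument is a routine compactness and scaling one.
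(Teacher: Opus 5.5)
Your proof is correct and follows essentially the same route as the paper: pull back to $\Khat$, where the factors $(|K|/|\Khat|)^{1/r+1/r'}=|K|/|\Khat|$ match on both sides, and then conclude by finite-dimensionality of $P_j(\Khat)$. The paper phrases the reference step as pairwise equivalence of three norms, whereas your compactness argument needs only continuity of the left-hand side and positivity of $\int_{\Khat}\hat\bint\,\hat z^2$; this is a slight economy, since the paper's remark that $\hat z\mapsto\nrmK{\hat\bint\hat z}{r}{\Khat}$ is a norm is not actually needed.
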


\begin{proof}
  On $\Khat$, the three maps
  $z \mapsto \nrmK{\hat \bint z}{r}{\Khat}$,
  $z \mapsto \nrmK{z}{r'}{\Khat}$ and
  $z \mapsto \ip{z}{\hat \bint z}{\Khat}^{1/2}$ are norms on the
  finite dimensional space $P_j(\Khat)$ (the first because
  $\hat\bint \neq 0$ and polynomials form an integral domain, the
  third because $\hat\bint > 0$ in the interior of $\Khat$), hence
  pairwise equivalent with constants depending only on $(N,j,r)$.
  This proves the claim on $\Khat$. Mapping to $K$, the left-hand
  side scales by
  $(|K|/|\Khat|)^{1/r} (|K|/|\Khat|)^{1/r'} = |K|/|\Khat|$ and the
  right-hand side by $|K|/|\Khat|$ as well, so the inequality
  transfers with the same constant.
\end{proof}

From now on, $A\lsim B$ means that $A\le CB$ with
a positive constant $C$ depending on $N,k,p,\kappa_0$, and generally
on $\theta_0$, but independent of $\theta_0$ when $k=1$.

\begin{lemma}[Element estimates]
  \label{lem:element}
  Suppose Assumption~\ref{asm:beta} and \eqref{eq:shapereg} hold,
  and that either Assumption~\ref{asm:theta} holds or $k = 1$. Let
  $1 \le p < \infty$, let $q$ be as in \eqref{eq:conjugate}, let
  $K \in \oh$, $\beta = \beta_K$, and let $s := s(\beta)$ be its sign
  vector \eqref{eq:signs}. Then, for all
  $c \in \R^m$,
  \begin{subequations}
    \begin{align}
      f_{p,K}(c,s)^2
      &\lsim  |K|^{\frac1p - \frac1q} \;
      \frac{|M_K|}{|\beta|} \; d_K(\beta, c),
      \label{eq:f2-dbound}
      \\
      \tilde f_{q,K}(c)
      &\lsim |K|^{\frac1q - \frac1p}\; f_{p,K}(c,s),
      \label{eq:ftilde-bound}
      \\
      \tilde S_{q,K}(\beta, c)
      &\lsim |K|^{\frac1q - \frac1p}\; |M_K^{-1}|\,|\beta| \;
      f_{p,K}(c,s).
      \label{eq:Stilde-bound}
    \end{align}
  \end{subequations}
\end{lemma}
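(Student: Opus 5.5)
Each of the three inequalities will be obtained by pulling back to $\Khat$ with Lemma~\ref{lem:scaling}, applying the reference bounds of Lemma~\ref{lem:equiv} with $\hat\beta = M_K^{-1}\beta$ in place of $\beta$, and pushing forward again. Write $\rho := |K|/|\Khat|$.

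\emph{Estimate \eqref{eq:f2-dbound}.} By \eqref{eq:f-scaling} and the upper bound in \eqref{eq:f-equiv},
\[
f_{p,K}(c,s)^2 = \rho^{2/p} f_{p,\Khat}(c,s)^2 \le C_2^2 \rho^{2/p}|c|^2 .
\]
Next we need a lower bound for $d_{\Khat}(\hat\beta,c)$ in terms of $|\hat\beta|\,|c|^2$.
\begin{itemize}
\item If $k=1$, \eqref{eq:d-lower} gives $|c|^2 \le C_0^{-1} d_{\Khat}(\hat\beta,c)/|\hat\beta|$.
\item If $k\ge2$, Lemma~\ref{lem:signs} shows that $\hat\beta$ satisfies the hypothesis of \eqref{eq:d-lower-theta} with $\theta=\theta_0/\kappa_0$, since $\kappa_K\le\kappa_0$. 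This yields the same inequality with $C_0^{\theta}$.
\end{itemize}
Here it matters that $s(\hat\beta)=s(\beta)$ by Lemma~\ref{lem:signs}, so the reference quantities are taken with the correct sign vector. Combining with \eqref{eq:d-scaling}, $d_{\Khat}(\hat\beta,c)=\rho^{-1}d_K(\beta,c)$, gives
\[
f_{p,K}^2 \lsim \rho^{2/p-1}\,\frac{d_K(\beta,c)}{|\hat\beta|}.
\]
Since $2/p-1 = 1/p-1/q$, it remains to bound $1/|\hat\beta|$. From $|\beta| = |M_K\hat\beta| \le |M_K|\,|\hat\beta|$ we get $1/|\hat\beta|\le |M_K|/|\beta|$, which is \eqref{eq:f2-dbound}. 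The only delicate point in the whole proof is this step: choosing the compact-set constant $C_0^{\theta_0/\kappa_0}$ uniformly over the mesh family. That uniformity comes from shape regularity and Lemma~\ref{lem:signs}. For $p=1$ the convention $1/q=0$ is consistent throughout.

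\emph{Estimate \eqref{eq:ftilde-bound}.} Using \eqref{eq:f-scaling} with exponent $q$, and both halves of \eqref{eq:f-equiv} (applied with $q$ and with $p$), we get
\[
\tilde f_{q,K} = \rho^{1/q}\tilde f_{q,\Khat} \le C_2\rho^{1/q}|c| \le (C_2/C_1)\,\rho^{1/q-1/p} f_{p,K}(c,s).
\]
The constants depend on $q$, but $q$ is determined by $p$, so this is allowed.

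\emph{Estimate \eqref{eq:Stilde-bound}.} By \eqref{eq:S-scaling} and \eqref{eq:S-upper} with exponent $q$,
\[
\tilde S_{q,K}(\beta,c) = \rho^{1/q}\tilde S_{q,\Khat}(\hat\beta,c) \le C_2\rho^{1/q}|\hat\beta|\,|c|.
\]
Now use $|\hat\beta|\le|M_K^{-1}|\,|\beta|$ and $|c|\le C_1^{-1}\rho^{-1/p}f_{p,K}(c,s)$ to conclude. For $q=\infty$, the upper bounds in Lemma~\ref{lem:equiv} cover $p=\infty$ as well, so that case is included.
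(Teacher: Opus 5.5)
Your proposal is correct and follows the paper's proof essentially step for step. You pull back with the scaling identities of Lemma~\ref{lem:scaling}, apply the reference bounds of Lemma~\ref{lem:equiv} to $\hat\beta = M_K^{-1}\beta$ (using Lemma~\ref{lem:signs} for $s(\hat\beta)=s$ and, when $k\ge2$, for the angle bound with $\theta=\theta_0/\kappa_0$), and convert the factors of $|\hat\beta|$ via $1/|\hat\beta|\le|M_K|/|\beta|$ and $|\hat\beta|\le|M_K^{-1}|\,|\beta|$, exactly as the paper does.
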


\begin{proof}
  Throughout, $\hat\beta = M_K^{-1}\beta$, which by
  Lemma~\ref{lem:signs} satisfies $s(\hat\beta) = s(\beta) = s$ on
  $\Khat$ and, when $k \ge 2$, satisfies
  $|\hat\beta \cdot \hat n_i| \ge (\theta_0/\kappa_0) |\hat\beta|$ for
  all $i$; thus Lemma~\ref{lem:equiv} applies on $\Khat$, giving
  \eqref{eq:d-lower-theta} with $\theta = \theta_0/\kappa_0$ when
  $k \ge 2$, and \eqref{eq:d-lower} (with no $\theta$-restriction)
  when $k = 1$. Also note $1/|\hat\beta| \le |M_K|/|\beta|$, since
  $|\beta| = |M_K\hat\beta| \le |M_K||\hat\beta|$. Factors of
  $|\Khat|$ are absorbed into constants.

  Proof of \eqref{eq:f2-dbound}:
  \begin{align*}
    f_{p,K}(c,s)^2
    &= \Big( \tfrac{|K|}{|\Khat|} \Big)^{2/p}
    f_{p,\Khat}(c,s)^2
    && \text{by \eqref{eq:f-scaling}}
    \\
    &\lsim \Big( \tfrac{|K|}{|\Khat|} \Big)^{2/p} |c|^2
    \lsim \Big( \tfrac{|K|}{|\Khat|} \Big)^{2/p}
    \frac{ d_{\Khat}(\hat\beta, c)}{|\hat\beta|}
    && \text{by \eqref{eq:f-equiv}, \eqref{eq:d-lower}/\eqref{eq:d-lower-theta}}
    \\
    &= \Big( \tfrac{|K|}{|\Khat|} \Big)^{2/p - 1}
    \frac{ d_{K}(\beta, c)}{|\hat\beta|}
    \;\le\; \Big( \tfrac{|K|}{|\Khat|} \Big)^{2/p-1}
    \frac{|M_K|}{|\beta|}\; d_K(\beta, c)
    && \text{by \eqref{eq:d-scaling}},
  \end{align*}
  and $2/p - 1 = 1/p - 1/q$.

  Proof of \eqref{eq:ftilde-bound}:
  \begin{equation*}
    \tilde f_{q,K}(c)
    = \Big( \tfrac{|K|}{|\Khat|} \Big)^{1/q}
    \tilde f_{q,\Khat}(c)
    \,\lsim\, \Big( \tfrac{|K|}{|\Khat|} \Big)^{1/q} |c|
    \,\lsim\, \Big( \tfrac{|K|}{|\Khat|} \Big)^{1/q}
    f_{p,\Khat}(c,s)
    = \Big( \tfrac{|K|}{|\Khat|} \Big)^{1/q - 1/p}
    f_{p,K}(c,s),
  \end{equation*}
  by \eqref{eq:f-scaling} (twice, with exponents $q$ and $p$) and
  \eqref{eq:f-equiv} (upper bound at exponent $q$, lower bound at
  exponent $p$).

  Proof of \eqref{eq:Stilde-bound}: Similarly,
  \begin{equation*}
    \tilde S_{q,K}(\beta, c)
    = \Big( \tfrac{|K|}{|\Khat|} \Big)^{1/q}
    \tilde S_{q,\Khat}(\hat\beta, c)
    \,\lsim\, \Big( \tfrac{|K|}{|\Khat|} \Big)^{1/q}
    |\hat\beta|\, |c|
    \,\lsim\, \Big( \tfrac{|K|}{|\Khat|} \Big)^{1/q - 1/p}
    |M_K^{-1}|\, |\beta| \, f_{p,K}(c,s),
  \end{equation*}
  by \eqref{eq:S-scaling}, \eqref{eq:S-upper},
  $|\hat\beta| \le |M_K^{-1}||\beta|$, and \eqref{eq:f-equiv} with
  \eqref{eq:f-scaling} as before.
\end{proof}

\section{Continuity of Fortin and augmented Fortin operators}
\label{sec:continuity}

\subsection{Bounding  the Fortin operator}
\label{sec:boundedness}

For $1 < p < \infty$, $K \in \oh$ and $v \in \W{q}{K}$, define
\begin{equation}
  \label{eq:norm}
  \tnrm{v}{q, K}
  := \nrmK{v}{q}{K}
  + \frac{h_K}{|\beta_K|}\,
  \nrmK{\beta\cdot\nabla v}{q}{K},
  \qquad
  \tnrm{v}{q,\oh}^q := \sum_{K \in \oh} \tnrm{v}{q,K}^q .
\end{equation}
On each fixed $K$ this norm generates the same topology as the
canonical norm $\nrmK{v}{q}{K} + \nrmK{\beta\cdot\nabla v}{q}{K}$ of
$\W{q}{K}$, but with equivalence constants depending on
$h_K/|\beta_K|$. The weighting in~\eqref{eq:norm} is chosen so that
the constant $C$ of Theorem~\ref{thm:main} below depends on neither
$h_K$ nor $|\beta_K|$, and is therefore uniform over the
shape-regular mesh family and invariant under rescaling of $\beta$.
In the proof below, we will also use the standard polynomial inverse
estimate: for any $j \ge 0$ and $1 \le r \le \infty$,
\begin{equation}
  \label{eq:inverse}
  \nrmK{\nabla g}{r}{K}  \lsim  |M_K^{-1}|\, \nrmK{g}{r}{K}
  \qquad \text{for all } g \in P_j(K).
\end{equation}

\begin{theorem}[Boundedness of $\varPi_h$]
  \label{thm:main}
  Suppose Assumption~\ref{asm:beta} and the shape regularity
  bound~\eqref{eq:shapereg} hold, and that either
  Assumption~\ref{asm:theta} holds or $k = 1$. Let
  $1 < p < \infty$. There is a constant
  $C = C(N, k, q, \theta_0, \kappa_0)$, independent of $\theta_0$ in
  the lowest-order case $k = 1$, such that for all
  $K \in \oh$ and $v \in \W{q}{K}$,
  \begin{equation}
    \label{eq:varPi-local-bound}
    \tnrm{\varPi_K v}{q,K} \,\le\, C\, \tnrm{v}{q,K} .
  \end{equation}
  Consequently
  $\tnrm{\varPi_h v}{q,\oh} \le C \tnrm{v}{q,\oh}$ for all
  $v \in \W{q}{\oh}$.
  Moreover, if $v$ is such that
  \begin{equation}
    \label{eq:annihilated}
    \ip{z}{v}{K} = 0 \quad \text{for all } z \in P_{k-2}(K),
    \qquad
    \ip{\beta\cdot\nabla w}{v}{K} = 0 \quad \text{for all } w \in
    \Pkperp{k}(K),
  \end{equation}
  then the sharper bounds
  \begin{equation}
    \label{eq:varPi-refined}
    \nrmK{\varPi_K v}{q}{K} \,\le\, C\, a_K\,
    \nrmK{\beta\cdot\nabla v}{q}{K},
    \qquad
    \nrmK{\beta\cdot\nabla \varPi_K v}{q}{K} \,\le\, C\,
    \nrmK{\beta\cdot\nabla v}{q}{K}
  \end{equation}
  hold, where $a_K := h_K/|\beta_K|$.
\end{theorem}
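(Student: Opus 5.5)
Decompose $\varPi_K v = \varphi + \bint\psi$ with $\varphi = \varphi_K(c,s) = \bdr\, w_K(c)$ and $\psi\in P_{k-2}(K)$; Lemma~\ref{lem:structure} makes this unique. The boundary part $\varphi$ is estimated first, then the interior part, and the bounds are combined at the end. Throughout, $a_K = h_K/|\beta_K|$, $|M_K|\lsim h_K$, and $h_K|M_K^{-1}|\le\kappa_0$ by~\eqref{eq:shapereg}.

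\emph{Boundary part.} Test \eqref{eq:F1} with $w = w_K(c)$. Since $\bint$ vanishes on $\partial K$, \eqref{eq:d-identity} gives
\[
d_K(\beta,c)=\ap{D_K w_K(c),\varPi_K v}=\ap{D_K w_K(c), v}.
\]
Then \eqref{eq:DK} and H\"older give
\[
d_K\le \tilde S_{q,K}\|v\|_{L_p(K)}+\tilde f_{q,K}\|\beta\cdot\nabla v\|_{L_p(K)}.
\]
This has the wrong exponents, so swap the roles. Write
\[
\ap{D_K w, v} = (\beta\cdot\nabla w, v)_K + (w,\beta\cdot\nabla v)_K
\]
and bound it by $\|\beta\cdot\nabla w\|_{L_p}\|v\|_{L_q}+\|w\|_{L_p}\|\beta\cdot\nabla v\|_{L_q}$. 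This requires $p$-versions of \eqref{eq:ftilde-bound}--\eqref{eq:Stilde-bound}, i.e.\ $\tilde f_{p,K}\lsim f_{q,K}(c,s)$ and $\tilde S_{p,K}\lsim |M_K^{-1}||\beta| f_{q,K}(c,s)$. They follow by the same scaling proof with $p$ and $q$ interchanged. Lemma~\ref{lem:element} \eqref{eq:f2-dbound} with $p\leftrightarrow q$ gives
\[
f_{q,K}^2\lsim |K|^{1/q-1/p}\,\frac{|M_K|}{|\beta|}\, d_K .
\]
The $|K|$ powers from the transfer bounds cancel this factor exactly, leaving
\[
f_{q,K}^2 \lsim \frac{|M_K|}{|\beta|}\big(|M_K^{-1}||\beta|\,\|v\|_{L_q}+\|\beta\cdot\nabla v\|_{L_q}\big) f_{q,K}.
\]
Shape regularity then yields $\|\varphi\|_{L_q(K)}\lsim \tnrm{v}{q,K}$. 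An inverse estimate \eqref{eq:inverse} gives $a_K\|\beta\cdot\nabla\varphi\|_{L_q}\lsim a_K|\beta||M_K^{-1}|\|\varphi\|_{L_q}\lsim\|\varphi\|_{L_q}$. The lower bound \eqref{eq:d-lower}/\eqref{eq:d-lower-theta} enters only through \eqref{eq:f2-dbound}, which is why the constant depends on $\theta_0$ only when $k\ge2$. This step, fixing the exponent bookkeeping so that everything appears on the $q$ side, is the main point to check.

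\emph{Interior part.} Use \eqref{eq:F2} with $z=\psi$ to get $(\psi,\bint\psi)_K=(\psi, v-\varphi)_K$. By Lemma~\ref{lem:bubble} with $r=q$,
\[
\|\bint\psi\|_{L_q}\|\psi\|_{L_p}\lsim \|\psi\|_{L_p}\big(\|v\|_{L_q}+\|\varphi\|_{L_q}\big),
\]
so $\|\bint\psi\|_{L_q}\lsim\tnrm{v}{q,K}$. The advective derivative is again handled by \eqref{eq:inverse}. Summing the two parts proves \eqref{eq:varPi-local-bound}, and raising to the power $q$ and summing over $K$ gives the global bound.

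\emph{Refined bound.} Under \eqref{eq:annihilated}, the term $(\beta\cdot\nabla w_K(c), v)_K$ vanishes, so
\[
d_K=(w_K(c),\beta\cdot\nabla v)_K\le \tilde f_{p,K}\|\beta\cdot\nabla v\|_{L_q}.
\]
The same chain then gives $\|\varphi\|_{L_q}\lsim a_K\|\beta\cdot\nabla v\|_{L_q}$. For $\psi$, \eqref{eq:annihilated} gives $(\psi,v)_K=0$, so $(\psi,\bint\psi)_K=-(\psi,\varphi)_K$ and hence $\|\bint\psi\|_{L_q}\lsim\|\varphi\|_{L_q}$. This proves the first bound in \eqref{eq:varPi-refined}. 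The second follows from the inverse estimate, since $|\beta||M_K^{-1}|a_K\lsim1$.
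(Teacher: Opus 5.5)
Your proposal is correct and follows the paper's proof essentially step for step. The shared steps are: the identity $d_K(\beta,c)=\ap{D_K w_K(c), v}$; H\"older with $v$ and $\beta\cdot\nabla v$ measured in $L_q$; Lemma~\ref{lem:element} applied with $q$ in place of its generic exponent, so the powers of $|K|$ cancel; Lemma~\ref{lem:bubble} with $r=q$ for the interior part; the inverse estimate for the streamline derivative; and the same simplifications under \eqref{eq:annihilated}. The first H\"older display, with the exponents reversed, is never used and should simply be deleted.
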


\begin{proof}
  Write $\beta = \beta_K$ and $s := s(\beta)$, fix $v \in \W{q}{K}$,
  and let
  $\varphi := \varPi_K v$. By Lemma~\ref{lem:structure} and the
  parametrization \eqref{eq:param}, there are unique $c \in \R^m$ and
  $z \in P_{k-2}(K)$ such that
  \begin{equation}
    \label{eq:Pi-v-split}
    \varphi = \varphi^{\perp} + \mathring\varphi,
    \qquad
    \varphi^\perp = \varphi_K(c,s) = \bdr\, w, \quad w := w_K(c),
    \qquad
    \mathring\varphi = \bint z .
  \end{equation}

  \emph{Step 1.} Let us  bound
  $\| \varphi^\perp\|_{L_q(K)} = f_{q,K}(c, s)$.
  Since $\mathring\varphi$ is a polynomial vanishing on $\partial K$,
  equation~\eqref{eq:DKbdry} gives
  $\ap{D_K w, \mathring\varphi} = 0$, so by
  \eqref{eq:d-identity} and the Fortin condition \eqref{eq:F1},
  \begin{equation*}
    d_K(\beta, c)
    = \ap{D_K w, \varphi^\perp}
    = \ap{D_K w, \varphi}
    = \ap{D_K w, v}
    = \ip{\beta\cdot\nabla w}{v}{K} + \ip{w}{\beta\cdot\nabla v}{K} .
  \end{equation*}
  H\"older's inequality with the conjugate exponents
  \eqref{eq:conjugate}, followed by \eqref{eq:Stilde-bound} and
  \eqref{eq:ftilde-bound} applied with $q$ in place of their generic
  exponent $p$, yields
  \begin{align*}
    d_K(\beta, c)
    &\le \tilde S_{p,K}(\beta,c)\, \nrmK{v}{q}{K}
    + \tilde f_{p,K}(c)\, \nrmK{\beta\cdot\nabla v}{q}{K}
    \\
    &\lsim |K|^{\frac1p - \frac1q}
    \Big( |M_K^{-1}|\,|\beta|\, \nrmK{v}{q}{K}
    + \nrmK{\beta\cdot\nabla v}{q}{K} \Big)\, f_{q,K}(c,s) .
  \end{align*}
  Combining with \eqref{eq:f2-dbound}, again applied with $q$ in
  place of its generic exponent $p$, and noting that the powers of
  $|K|$ cancel,
  \begin{equation*}
    f_{q,K}(c,s)^2
    \lsim \frac{|M_K|}{|\beta|}
    \Big( |M_K^{-1}|\,|\beta|\, \nrmK{v}{q}{K}
    + \nrmK{\beta\cdot\nabla v}{q}{K} \Big)\, f_{q,K}(c,s).
  \end{equation*}
  Canceling one factor of
  $f_{q,K}(c,s)$  and using \eqref{eq:MhK} and
  \eqref{eq:shapereg},
  \begin{equation}
    \label{eq:perp-bound}
    \nrmK{\varphi^\perp}{q}{K}
    \lsim  \nrmK{v}{q}{K}
    + \frac{h_K}{|\beta|}\,
    \nrmK{\beta\cdot\nabla v}{q}{K}
    \lsim \tnrm{v}{q,K} .
  \end{equation}
  Under the additional
  hypothesis~\eqref{eq:annihilated},
   the term $\nrmK{v}{q}{K}$ that entered through
   $\ip{\beta\cdot\nabla w}{v}{K}$
   is absent, so  the
  above simplifies to
  \begin{equation}
    \label{eq:perp-bound-refined}
    \nrmK{\varphi^\perp}{q}{K}
    \lsim a_K \, \nrmK{\beta\cdot\nabla v}{q}{K} .
  \end{equation}

  \emph{Step 2.} Next we bound the component $\mathring\varphi$.  If
  $k = 1$ then $\mathring \varphi = 0$. So suppose $k \ge 2$ and
  $z \not\equiv 0$ in \eqref{eq:Pi-v-split} so $\mathring \varphi = \bint z \not\equiv 0$. The Fortin condition
  \eqref{eq:F2} with test function $z$ gives
  \begin{equation*}
    \ip{z}{\bint z}{K}
    = \ip{z}{v - \varphi^\perp}{K}
    \le \nrmK{z}{p}{K}
    \big( \nrmK{v}{q}{K} + \nrmK{\varphi^\perp}{q}{K} \big) .
  \end{equation*}
  By Lemma~\ref{lem:bubble}, applied with $j=k-2$, $r=q$, and
  $r'=p$,
  $\nrmK{\bint z}{q}{K} \nrmK{z}{p}{K} \le C \ip{z}{\bint z}{K}$, so
  dividing by $\nrmK{z}{p}{K}$ and using \eqref{eq:perp-bound},
  \begin{equation}
    \label{eq:int-bound}
    \nrmK{\mathring\varphi}{q}{K}
    = \nrmK{\bint z}{q}{K}
    \lsim   \nrmK{v}{q}{K} + \nrmK{\varphi^\perp}{q}{K}
    \lsim \tnrm{v}{q,K} .
  \end{equation}
  Under the additional hypothesis~\eqref{eq:annihilated},
  the term $\nrmK{v}{q}{K}$ that arose from
  $\ip{z}{v}{K}$
  vanishes, so $\ip{z}{\bint z}{K} = -\ip{z}{\varphi^\perp}{K}$ and,
  by~\eqref{eq:perp-bound-refined},
  \begin{equation}
    \label{eq:int-bound-refined}
    \nrmK{\mathring\varphi}{q}{K}
    \lsim \nrmK{\varphi^\perp}{q}{K}
    \lsim a_K\, \nrmK{\beta\cdot\nabla v}{q}{K}.
  \end{equation}

  \emph{Step 3.}
  By \eqref{eq:perp-bound} and \eqref{eq:int-bound},
  $\nrmK{\varphi}{q}{K} \le C \tnrm{v}{q,K}$. For the streamline
  derivative, since $\varphi \in P_{k+N}(K)$, the inverse estimate
  \eqref{eq:inverse} together with \eqref{eq:MhK} and
  \eqref{eq:shapereg} gives
  \begin{equation*}
    \frac{h_K}{|\beta|} \nrmK{\beta\cdot\nabla\varphi}{q}{K}
    \le h_K \nrmK{\nabla \varphi}{q}{K}
    \le C\, h_K |M_K^{-1}|\, \nrmK{\varphi}{q}{K}
    \le C \kappa_0 \nrmK{\varphi}{q}{K}
    \le C\, \tnrm{v}{q,K},
  \end{equation*}
  so the local
  bound~\eqref{eq:varPi-local-bound} follows.
  Raising it to the $q$-th power
  and summing over $K \in \oh$ proves the global one.
  Finally, under~\eqref{eq:annihilated}, adding
  \eqref{eq:perp-bound-refined} and \eqref{eq:int-bound-refined}
  gives the first bound of~\eqref{eq:varPi-refined}, and applying to it
  the same argument using the
  inverse inequality~\eqref{eq:inverse}, gives the
  second.
\end{proof}

\subsection{Augmented Fortin operators}
\label{sec:augmented}

The derivative bound implicit in Theorem~\ref{thm:main} carries the
weight $a_K$, i.e., on each element, for any $v \in \W{q}{K}$,
\begin{equation}
  \label{eq:oldbounds}
  \begin{aligned}
    \nrmK{\varPi_K v}{q}{K}
    & \le C \big( \nrmK{v}{q}{K} + a_K \nrmK{\beta\cdot\nabla v}{q}{K}
    \big),
    \\
    \nrmK{\beta\cdot\nabla \varPi_K v}{q}{K}
    & \le C a_K^{-1} \big( \nrmK{v}{q}{K}
    + a_K \nrmK{\beta\cdot\nabla v}{q}{K} \big),
  \end{aligned}
\end{equation}
and the term $a_K^{-1} \nrmK{v}{q}{K}$ reflects the mapping of
constants by  $\varPi_K$
 to nonconstant functions, whose streamline derivatives scale
like $a_K^{-1}$ times their size. Adapting a correction trick from
the proof of~\cite[Lemma~3.2]{GopalQiu14} (see also~\cite[Theorems~5.4 and~5.7]{DPGacta} and \cite{FuhreHeuer24}), we
now improve~\eqref{eq:oldbounds} to~\eqref{eq:kone-bound} below,  at the price of augmenting
the test space by one constant per element.

To show the idea in general, for any linear operator
$R_K : \W{q}{K} \to \W{q}{K}$, define
\begin{equation}
  \label{eq:piplus}
  \varPi^+_K v \,:=\, R_K v + \varPi_K ( v - R_K v ),
  \qquad
  (\varPi^+_h v)|_K := \varPi^+_K (v|_K) .
\end{equation}
Two properties are immediate from the identity
$\varPi^+_K v - v = ( \varPi_K - I )( v - R_K v )$.
First, $\varPi^+_K$ satisfies the same
Fortin conditions~\eqref{eq:fortin-conditions} as $\varPi_K$
and the Fortin property of
Lemma~\ref{lem:fortin-property} holds verbatim for $\varPi^+_h$. Second, $\varPi^+_K$ maps into the
augmented space $V(K) + \operatorname{range}(R_K)$ and, if $R_K$ is
a projection,
\begin{equation}
  \label{eq:piplus-reproduces}
  \varPi^+_K \psi = \psi \qquad \text{ for every } \psi \in \operatorname{range}(R_K).
\end{equation}
Now,  for $k = 1$ the conditions~\eqref{eq:annihilated} on $v$ say exactly that
$v$ has mean zero: the space $P_{k-2}(K)$ is trivial, and by definition~\eqref{eq:jN-pols},
$\beta\cdot\nabla \Pkperp{1}(K) = \beta\cdot\nabla P_1(K)$ consists
of constants. This motivates the choice of $R_K$ in the next result.

\begin{theorem}[Constant-preserving Fortin operator]
  \label{thm:kone}
  Adopt the hypotheses of Theorem~\ref{thm:main}. On each
  element let $R_K v := \bar v$, the mean value of $v$ over $K$,
  and define $\varPi^+_K$ by \eqref{eq:piplus}. Then the following
  statements hold.
  \begin{enumerate}[label=(\alph*)]
  \item For every $k \ge 1$, the operator $\varPi^+_K$ maps
    $\W{q}{K}$ into
    \begin{equation}
      \label{eq:Vplus}
      V^+(K) := V(K) + P_0(K),
    \end{equation}
    satisfies the Fortin conditions \eqref{eq:fortin-conditions},
    and reproduces constants, $\varPi^+_K c = c$ for all
    $c \in P_0(K)$. The sum in \eqref{eq:Vplus} is direct whenever
    $N \ge 2$.
  \item If $k = 1$ (in which case Assumption~\ref{asm:theta} is
    \emph{not} needed), then there is a
    $C = C(N, q, \kappa_0)$
     such that for all
     $K \in \oh$ and $v \in \W{q}{K}$,
     \begin{equation}
       \label{eq:kone-bound}
       \begin{aligned}
         \nrmK{\varPi^+_K v}{q}{K}
         & \le C \big( \nrmK{v}{q}{K}
           + a_K \nrmK{\beta\cdot\nabla v}{q}{K} \big),
         \\
         \nrmK{\beta\cdot\nabla \varPi^+_K v}{q}{K}
         & \le C \nrmK{\beta\cdot\nabla v}{q}{K}.
       \end{aligned}
     \end{equation}
     Consequently, for every choice of
     per-element weights $\sigma_K \ge a_K$,
     \begin{equation}
       \label{eq:kone-sigma}
       \sigma_K^{-1} \nrmK{\varPi^+_K v}{q}{K}
       + \nrmK{\beta\cdot\nabla \varPi^+_K v}{q}{K}
       \,\le\, C \big( \sigma_K^{-1} \nrmK{v}{q}{K}
       + \nrmK{\beta\cdot\nabla v}{q}{K} \big).
     \end{equation}
  \item For every $k\ge1$, there is a
    $C=C(N,k,q,\theta_0,\kappa_0)$, independent of $\theta_0$ when
    $k=1$, such that
    \begin{equation}
      \label{eq:piplus-bound}
      \tnrm{\varPi_K^+v}{q,K}
      \le C\tnrm{v}{q,K},
      \qquad v\in\W{q}{K}.
    \end{equation}
    Consequently,
    $\tnrm{\varPi_h^+v}{q,\oh}\le
    C\tnrm{v}{q,\oh}$ for all $v\in\W{q}{\oh}$.
  \end{enumerate}
\end{theorem}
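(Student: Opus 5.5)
The plan is to treat the three parts in order, each resting on the identity $\varPi^+_K v - v = (\varPi_K - I)(v - \bar v)$ and on Theorem~\ref{thm:main}.

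For (a), $\varPi^+_K v = \bar v + \varPi_K(v-\bar v)$ lies in $P_0(K) + V(K)$. The Fortin conditions follow from the displayed identity, since $v-\bar v \in \W{q}{K}$ and $\varPi_K$ satisfies \eqref{eq:fortin-conditions}. Because $R_K$ is a projection, \eqref{eq:piplus-reproduces} gives reproduction of constants.

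For directness when $N\ge 2$, suppose a nonzero constant $c_0$ lies in $V(K)$, say $c_0 = \bdr \vartheta + \bint\psi$. On $\partial K$ we have $\bint = 0$ and $\bdr|_{F_i} = s_i b_i$. Each $b_i$ vanishes on $\partial F_i$, which is nonempty when $N\ge2$, so $\bdr$ vanishes at every vertex. Evaluating at a vertex then gives $c_0 = 0$, a contradiction. This avoids any need to analyze $\vartheta$.

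For (b), take $k=1$ and set $\tilde v := v - \bar v$. The key observation, already noted before the theorem, is that for $k=1$ the conditions \eqref{eq:annihilated} reduce to $\tilde v$ having zero mean. Indeed, $P_{-1}=\{0\}$, and $\beta\cdot\nabla\Pkperp{1}(K)$ consists of constants. Hence the refined bounds \eqref{eq:varPi-refined} apply to $\tilde v$, and $\beta\cdot\nabla\tilde v = \beta\cdot\nabla v$. This gives
\[
\nrmK{\varPi_K\tilde v}{q}{K}\lsim a_K\nrmK{\beta\cdot\nabla v}{q}{K},
\qquad
\nrmK{\beta\cdot\nabla\varPi_K\tilde v}{q}{K}\lsim\nrmK{\beta\cdot\nabla v}{q}{K}.
\]
Since $\beta\cdot\nabla \bar v=0$, the second bound in \eqref{eq:kone-bound} follows at once. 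For the first, add $\nrmK{\bar v}{q}{K}\le\nrmK{v}{q}{K}$, which is Hölder/Jensen for the mean. Here the constant is $\theta_0$-free, because Theorem~\ref{thm:main} is $\theta_0$-free when $k=1$.

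To obtain \eqref{eq:kone-sigma}, I would instead use the sharper intermediate bound $\nrmK{\varPi^+_K v}{q}{K}\le\nrmK{v}{q}{K}+Ca_K\nrmK{\beta\cdot\nabla v}{q}{K}$. Multiplying by $\sigma_K^{-1}$ and using $a_K/\sigma_K\le1$ gives
\[
\sigma_K^{-1}\nrmK{\varPi^+_K v}{q}{K}\le \sigma_K^{-1}\nrmK{v}{q}{K}+C\nrmK{\beta\cdot\nabla v}{q}{K}.
\]
Adding the derivative bound finishes (b).

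For (c), with general $k$, apply \eqref{eq:varPi-local-bound} to $\tilde v$ and use $\nrmK{\bar v}{q}{K}\le\nrmK{v}{q}{K}$. This gives
\[
\tnrm{\varPi^+_K v}{q,K}\le \tnrm{\bar v}{q,K}+C\tnrm{\tilde v}{q,K}.
\]
Now $\tnrm{\bar v}{q,K}=\nrmK{\bar v}{q}{K}$ since $\beta\cdot\nabla\bar v=0$, and $\tnrm{\tilde v}{q,K}\le 2\nrmK{v}{q}{K}+a_K\nrmK{\beta\cdot\nabla v}{q}{K}\le 2\tnrm{v}{q,K}$. Summing $q$-th powers over $K\in\oh$ yields the global bound.

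I expect no serious obstacle. The only delicate points are checking that the reduction of \eqref{eq:annihilated} to the zero-mean condition is exact for $k=1$, and keeping the unscaled $\nrmK{v}{q}{K}$ term separate so that \eqref{eq:kone-sigma} holds for every $\sigma_K\ge a_K$.
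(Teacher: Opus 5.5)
Your proposal is correct and follows the paper's proof essentially step for step. Part (a) uses the identity $\varPi^+_K v - v = (\varPi_K - I)(v-\bar v)$; part (b) applies the refined bounds \eqref{eq:varPi-refined} to the mean-zero function $v-\bar v$ and uses Jensen for $\bar v$; part (c) applies Theorem~\ref{thm:main} to $v-\bar v$. Your directness argument evaluates at a vertex, where both $\bdr$ and $\bint$ vanish when $N\ge2$. This is only a streamlined form of the paper's restriction of $\bdr\vartheta = c$ to a facet and then to that facet's relative boundary.
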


\begin{proof}
  \emph{Proof of (a).}
  The first statement of (a) is contained in the two properties noted
  after~\eqref{eq:piplus}, $R_K$ being a projection onto $P_0(K)$.
  For  the directness of the sum~\eqref{eq:Vplus}, suppose $\bdr\, u + \bint\, z$ is a nonzero
  constant $c$, with $u \in \Pkperp{k}(K)$ and
  $z \in P_{k-2}(K)$. Restricting to $\partial K$, where $\bint$
  vanishes, gives $\bdr\, u = c$ there. Choose a non-characteristic
  facet $F_i$, which exists by
  Lemma~\ref{lem:two-noncharacteristic}. On $F_i$ we have
  $\bdr|_{F_i} = s_i b_i$, and $b_i$ vanishes on the relative boundary
  $\partial F_i$, which is nonempty because $N \ge 2$. Hence
  $c = 0$, a contradiction.

  \emph{Proof of (b).}
  For \eqref{eq:kone-bound}, set $g := v - \bar v$. Then
  $\ip{\beta\cdot\nabla w}{g}{K} = 0$ for all
  $w \in \Pkperp{1}(K) = P_1(K)$. The
  $P_{-1}(K)$-condition in \eqref{eq:annihilated} is vacuous. So the
  refined bounds \eqref{eq:varPi-refined} of Theorem~\ref{thm:main}
  apply to $g$ and give, using
  $\beta\cdot\nabla g = \beta\cdot\nabla v$,
  \begin{equation*}
    \nrmK{\varPi_K( v - \bar v)}{q}{K}
    \le C a_K \nrmK{\beta\cdot\nabla v}{q}{K},
    \qquad
    \nrmK{\beta\cdot\nabla \varPi_K(v - \bar v)}{q}{K}
    \le C \nrmK{\beta\cdot\nabla v}{q}{K} .
  \end{equation*}
  Since $\nrmK{\bar v}{q}{K} \le \nrmK{v}{q}{K}$ by Jensen's
  inequality and $\beta\cdot\nabla \bar v = 0$, the bounds
  \eqref{eq:kone-bound} follow from~\eqref{eq:piplus}. Multiplying the first bound in
  \eqref{eq:kone-bound} by $\sigma_K^{-1}$ and using
  $a_K/\sigma_K \le 1$ gives \eqref{eq:kone-sigma}.

  \emph{Proof of (c).}
  From \eqref{eq:piplus} and Theorem~\ref{thm:main},
  \begin{equation*}
    \tnrm{\varPi_K^+v}{q,K}
    \le \nrmK{\bar v}{q}{K}
      +C\tnrm{v-\bar v}{q,K}
    \le C\tnrm{v}{q,K},
  \end{equation*}
  where the last inequality follows from Jensen's inequality and
  $\beta\cdot\nabla\bar v=0$. Raising this estimate to the $q$-th
  power and summing over the mesh proves the global bound.
\end{proof}

\begin{remark}[Benign one-dimensional exclusion]
  \label{rem:N1}
  The restriction to $N \ge 2$ in the directness assertion of
  Theorem~\ref{thm:kone}(a) is not an artifact: for $N = 1$ and
  $k = 2$, with $K = (0,1)$ and $\beta > 0$, observe that
  $\bdr = 2x-1$ and
  $
    \bdr \cdot (2x-1) + 4\, \bint
    = (2x-1)^2 + 4x(1-x) = 1 ,
  $
  where $2x-1 \in \Pkperp{2}(K)$.
  So $V(K)$ already contains
  the constants and $V^+(K) = V(K)$.
\end{remark}

One further augmentation will be helpful.  Define
\begin{align}
  \label{eq:Vplusplus}
  V^{++}(K)
  & :=V^+(K)+\operatorname{span}\{\bint\}
    \subset P_{k+N}(K),
  &&
     V_h^{++}:=\bigtimes_{K\in\oh}V^{++}(K),
  \\
  \label{eq:piplusplus}
  \varPi_K^{++}v
  & :=\varPi_K^+v+
    \frac{\ip{v-\varPi_K^+v}{1}{K}}{\ip{\bint}{1}{K}}\,\bint,
  && (\varPi_h^{++} v)|_K = \varPi_K^{++} (v|_K).
\end{align}

\begin{theorem}[Fortin operator that preserves both constants and means]
  \label{thm:kone-plusplus}
  Adopt the hypotheses and notation of Theorem~\ref{thm:kone}.
  Then the following statements hold.
  \begin{enumerate}[label=(\alph*)]
  \item For every $k\ge1$, the operator $\varPi_K^{++}$ maps
    $\W{q}{K}$ into $V^{++}(K)$, satisfies the Fortin conditions
    \eqref{eq:fortin-conditions}, reproduces constants, and preserves mean: for $c\in P_0(K)$,
    \begin{equation}
      \label{eq:F3}
      \varPi_K^{++}c=c,
      \qquad
      \ip{c}{\varPi_K^{++}v-v}{K}=0.
    \end{equation}
    The global Fortin property of Lemma~\ref{lem:fortin-property}
    holds with $\varPi_h^{++}$. Moreover, if $k\ge2$, then
    \begin{equation*}
      \varPi_K^{++}=\varPi_K^+,
      \qquad V^{++}(K)=V^+(K),
    \end{equation*}
    while if $k=1$ and $N\ge2$, then
    $
      V^{++}(K)
      =V(K)\oplus P_0(K)\oplus\operatorname{span}\{\bint\}.
    $
  \item If $k=1$, the estimates \eqref{eq:kone-bound} and
    \eqref{eq:kone-sigma} remain valid with $\varPi_K^{++}$ in
    place of $\varPi_K^+$.
  \item For $k\ge1$, there is a
    $C=C(N,k,q,\theta_0,\kappa_0)$, independent of $\theta_0$ when
    $k=1$, such that
    \begin{equation}
      \label{eq:piplusplus-bound}
      \tnrm{\varPi_K^{++}v}{q,K}
      \le C\tnrm{v}{q,K},
      \qquad v\in\W{q}{K}.
    \end{equation}
    Consequently,
    $\tnrm{\varPi_h^{++}v}{q,\oh}\le
    C\tnrm{v}{q,\oh}$ for all $v\in\W{q}{\oh}$.
  \end{enumerate}
\end{theorem}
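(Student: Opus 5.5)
The plan is to treat $\varPi_K^{++}$ as a rank-one correction of $\varPi_K^+$ and to exploit the facts that $\bint$ vanishes on $\partial K$ and has positive mean. Write $\mu(v):=\ip{v-\varPi_K^+v}{1}{K}/\ip{\bint}{1}{K}$, so that $\varPi_K^{++}v=\varPi_K^+v+\mu(v)\bint$.

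\emph{Part (a).}
\begin{itemize}
\item[(i)] The range statement is immediate from \eqref{eq:Vplusplus}.
\item[(ii)] Mean preservation follows by integrating \eqref{eq:piplusplus} against $1$, which gives \eqref{eq:F3}. Reproduction of constants follows from $\varPi_K^+c=c$ (Theorem~\ref{thm:kone}(a)), hence $\mu(c)=0$.
\item[(iii)] For the Fortin conditions, \eqref{eq:F1} is untouched, since $\ap{D_K w,\bint}=0$ by \eqref{eq:DKbdry}. For \eqref{eq:F2}, write $\ip{z}{\varPi_K^{++}v-v}{K}=\ip{z}{\varPi_K^+v-v}{K}+\mu(v)\ip{z}{\bint}{K}$. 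The first term is zero. For the second:
  \begin{itemize}
  \item if $k=1$, the condition \eqref{eq:F2} is vacuous;
  \item if $k\ge2$, apply \eqref{eq:F2} for $\varPi_K^+$ with $z=1\in P_{k-2}(K)$ to get $\mu(v)=0$.
  \end{itemize}
  The latter also shows $\varPi_K^{++}=\varPi_K^+$ when $k\ge2$. Further, $\bint=\bint\cdot1\in\bint P_{k-2}(K)\subset V(K)$, so $V^{++}(K)=V^+(K)$.
\item[(iv)] The global Fortin property then follows exactly as in Lemma~\ref{lem:fortin-property}, since only \eqref{eq:fortin-conditions} was used there.
\item[(v)] For directness when $k=1$, $N\ge2$: Theorem~\ref{thm:kone}(a) already gives $V(K)\oplus P_0(K)$. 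Suppose $\bdr u+c+t\bint=0$. Restricting to $\partial K$ gives $\bdr u+c=0$ there, and the argument of Theorem~\ref{thm:kone}(a) (vanishing of $b_i$ on $\partial F_i$) gives $c=0$. Lemma~\ref{lem:0-nonchar-facets} then gives $u=0$, so $t\bint=0$ and $t=0$.
\end{itemize}

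\emph{Part (b).} The key estimate is a bound on $\mu(v)$. Since $\bar v$ is reproduced by $\varPi_K^+$, we have $v-\varPi_K^+v=(I-\varPi_K)(v-\bar v)$, which has zero mean contribution from $v-\bar v$. Thus
\[
|\mu(v)|\,\ip{\bint}{1}{K}=|\ip{\varPi_K(v-\bar v)}{1}{K}|\le |K|^{1/p}\nrmK{\varPi_K(v-\bar v)}{q}{K}.
\]
The refined bound from the proof of Theorem~\ref{thm:kone}(b) bounds the right side by $C|K|^{1/p}a_K\nrmK{\beta\cdot\nabla v}{q}{K}$.

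Scaling gives $\ip{\bint}{1}{K}\simeq|K|$ and $\nrmK{\bint}{q}{K}\simeq|K|^{1/q}$, so
\[
\nrmK{\mu\bint}{q}{K}\lsim a_K\nrmK{\beta\cdot\nabla v}{q}{K}.
\]
The inverse estimate \eqref{eq:inverse} with shape regularity then gives $\nrmK{\beta\cdot\nabla(\mu\bint)}{q}{K}\lsim |\beta|\,|M_K^{-1}|\,a_K\nrmK{\beta\cdot\nabla v}{q}{K}\lsim\nrmK{\beta\cdot\nabla v}{q}{K}$. Adding these bounds to \eqref{eq:kone-bound} yields (b), and \eqref{eq:kone-sigma} follows as before from $a_K\le\sigma_K$.

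\emph{Part (c).} For $k\ge2$, (c) is \eqref{eq:piplus-bound}, because $\varPi_K^{++}=\varPi_K^+$. For $k=1$, part (b) gives $\tnrm{\varPi^{++}_Kv}{q,K}\lsim\nrmK{v}{q}{K}+a_K\nrmK{\beta\cdot\nabla v}{q}{K}$. Summing $q$-th powers over the mesh gives the global bound.

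The main point needing care is the bound on $\mu(v)$. Using the crude bound $\nrmK{v-\varPi_K^+v}{q}{K}$ would be insufficient for the derivative estimate, because it does not carry the factor $a_K$. This is why one must use $\varPi_K^+\bar v=\bar v$ together with the refined estimate \eqref{eq:varPi-refined}.
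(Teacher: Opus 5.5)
Your proposal is correct and follows the paper's proof essentially step for step. It uses the same rank-one bubble correction, the same reduction $v-\varPi_K^+v=(I-\varPi_K)(v-\bar v)$ combined with the refined bound \eqref{eq:varPi-refined}, the same restriction to $\partial F_i$ for directness, and the same reduction of (c) to Theorem~\ref{thm:kone}(c) when $k\ge2$. The only cosmetic difference is that you bound $\ip{\bint}{1}{K}$ and $\nrmK{\bint}{q}{K}$ by direct scaling, where the paper invokes Lemma~\ref{lem:bubble} with $z=1$; this is the same estimate.
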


\begin{proof}
  \emph{Proof of (a).}
  Since $\bint$ vanishes on $\partial K$,
  $
    \ap{D_Kw,\bint}=0
    $
    for all $w$ in $\Pk(K)$.
  Thus the bubble correction in \eqref{eq:piplusplus} does not
  change \eqref{eq:F1}. If $k=1$, condition \eqref{eq:F2} is
  vacuous. If $k\ge2$, then $1\in P_{k-2}(K)$, and \eqref{eq:F2}
  for $\varPi_K^+$ shows that the correction coefficient vanishes.
  Hence $\varPi_K^{++}=\varPi_K^+$, and also
  $V^{++}(K)=V^+(K)$ because
  $\bint\in\bint P_{k-2}(K)\subset V(K)$.
  The definition \eqref{eq:piplusplus} gives the moment condition in
  \eqref{eq:F3}. If $v=c\in P_0(K)$, then $\varPi_K^+c=c$ by
  Theorem~\ref{thm:kone}(a), so the correction vanishes and
  $\varPi_K^{++}c=c$.
  The element-wise Fortin conditions~\eqref{eq:fortin-conditions} obviously
  imply the global Fortin
  property.
  It remains to verify the direct sum asserted when $k=1$ and
  $N\ge2$. Suppose
  $
    \bdr u+c+\bint d=0,
    $
  for some $ u\in P_1(K)$ and  $c,d\in\R.$
  Restricting first to a non-characteristic facet and then to its
  relative boundary gives $c=0$. It follows that $\bdr u$ vanishes
  on $\partial K$, so the boundary argument in
  Lemma~\ref{lem:fortin-def} gives $u=0$. Finally, $\bint d=0$ gives
  $d=0$, thus proving that the sum is direct.

  \emph{Proof of (b).}
  Let $k=1$, set $g:=v-\bar v$, and denote the bubble correction by
  $t_K:=\varPi_K^{++}v-\varPi_K^+v$. Since
  $\ip{g}{1}{K}=0$, \eqref{eq:piplus} gives
  $
    t_K=-\ip{\varPi_Kg}{1}{K} \,\bint    /{\ip{\bint}{1}{K}}.
  $
  Lemma~\ref{lem:bubble}, applied with $j=0$, $r=q$, $r'=p$, and
  $z=1$, gives
  \[
    \nrmK{\bint}{q}{K}\nrmK{1}{p}{K}
    \lsim \ip{\bint}{1}{K}.
  \]
  Therefore, H\"older's inequality and
  \eqref{eq:varPi-refined} for the mean-zero function $g$ yield
  \begin{equation*}
    \nrmK{t_K}{q}{K}
    \lsim \nrmK{\varPi_Kg}{q}{K}
    \lsim a_K\nrmK{\beta\cdot\nabla v}{q}{K}.
  \end{equation*}
  Since $t_K$ is a polynomial of fixed degree, the inverse estimate
  \eqref{eq:inverse} and shape regularity give
  \begin{equation*}
    \nrmK{\beta\cdot\nabla t_K}{q}{K}
    \lsim a_K^{-1}\nrmK{t_K}{q}{K}
    \lsim \nrmK{\beta\cdot\nabla v}{q}{K}.
  \end{equation*}
  Adding these two estimates to \eqref{eq:kone-bound} proves that
  bound for $\varPi_K^{++}$. If $\sigma_K\ge a_K$, the same
  estimates, with $a_K/\sigma_K\le1$, also give
  \eqref{eq:kone-sigma}.

  \emph{Proof of (c).}
  For $k=1$, the assertion follows from part~(b). For $k\ge2$,
  part~(a) gives $\varPi_K^{++}=\varPi_K^+$, so both the local and
  global assertions follow from Theorem~\ref{thm:kone}(c).
\end{proof}

\section{Quasioptimality of the practical DPG method}
\label{sec:quasiopt}

Section~\ref{sec:weakform} introduced the practical method
\eqref{eq:method}, computed with any finite dimensional test space
$Y_h$ satisfying~\eqref{eq:Yh-condition}. In this
section we place it in a functional-analytic setting adapted to the
continuous problem~\eqref{eq:advection}, use the Fortin operator to
prove that it is quasioptimal, and derive convergence rates in~$h$.
Throughout this section we fix a Lebesgue exponent
$  1 < p < \infty
$
for the \emph{trial} side, so that its conjugate $q$ of
\eqref{eq:conjugate}  is the exponent
of the \emph{test} side.

\subsection{The undiscretized problem}

Since $\dive \beta = 0$, no coercivity is available from a reaction
term, so the solvability of the non-coercive transport
problem~\eqref{eq:advection}, as well as the trace theory needed to
give its boundary condition meaning, is  secured by conditions
on the flow. We assume the standard ones (see, e.g.,
\cite{BroerDahmeSteve18,DemkoGopal10,MugaTylerZee19}). Here
$
  \Gamma_{\mathrm{out}} := \{x \in \partial\Om : \beta\cdot n > 0\}
$
denotes the outflow boundary, complementing the inflow boundary
$\Gamma_{\mathrm{in}}$ of \eqref{eq:advection}.

\begin{assumption}[Flow conditions on $\Om$] \label{asm:fill}
  \textup{(i)~Separated inflow and outflow:} The  sets
    $\overline{\Gamma}_{\mathrm{in}}$ and
    $\overline{\Gamma}_{\mathrm{out}}$ are disjoint.
    \textup{(ii)~$\Om$-filling flow:} There exist
    $z_+, z_- \in L_\infty(\Om)$ with
    $\beta\cdot\nabla z_\pm \in L_\infty(\Om)$, and a constant
    $c_0 > 0$, such that
    \begin{equation}
      \label{eq:filling}
      - \beta \cdot \nabla z_\pm = c_0 \ \text{ in } \Om,
      \qquad
      z_+ = 0 \ \text{ on } \Gamma_{\mathrm{out}},
      \qquad
      z_- = 0 \ \text{ on } \Gamma_{\mathrm{in}} .
    \end{equation}
\end{assumption}

Neither part is new. Part (i) is the well-separation condition
of~\cite[eq.~(9)]{Canti17} (see also
\cite[Assumption~2.2]{MugaTylerZee19}). Few results are known without it (an exception being
\cite[Lemma~4.1]{GopalMonkSepul15}).
Part~(ii) is the
$\Om$-filling condition of
\cite[Assumption~2.8]{MugaTylerZee19}. It states informally
that the flow of $\beta$ has no closed streamlines and that every
streamline joins $\Gamma_{\mathrm{in}}$ to $\Gamma_{\mathrm{out}}$
in finite time (indeed $z_+/c_0$ is the time needed to reach
$\Gamma_{\mathrm{out}}$) and it holds, e.g., whenever the method of
characteristics applies
(see~\cite[Remark~2.9]{MugaTylerZee19} and \cite{DahmeHuangSchwa11a}).
The constant $c_0$
in~\eqref{eq:filling} is immaterial: rescaling $z_\pm$ changes it at
will. Another condition in the literature, perhaps
easier to verify, is the potential condition
of~\cite[Section~2.4]{Canti17}. In our case the reaction
coefficient $\mu$, the divergence $\dive\beta$, and hence the
Friedrichs coefficient
$\sigma_{\beta,\mu;p} = \mu - \tfrac1p\dive\beta$ all vanish, and
that condition reduces to the existence of a
Lipschitz $\psi = -\zeta$ with $\beta\cdot\nabla\psi \ge 2\mu_0 > 0$,
from which \eqref{eq:filling} follows by taking $z_+$ to be the
transit time to $\Gamma_{\mathrm{out}}$.
Note that Assumption~\ref{asm:fill}
rules out $\beta$ with closed streamlines in $\Om$ that never meet
$\Gamma_{\mathrm{in}}$, i.e., periodic orbits of $x'(t) = \beta(x(t))$ with period $T>0$: along it, $d z_+(x(t)) / dt = \beta \cdot \nabla z_+ = - c_0$, so $z_+$ strictly decreases at a nonzero constant rate, yet $z_+(x(T)) = z_+(x(0))$ by periodicity, which is impossible.
Note that no quantitative stability constant
of the continuous problem will enter Theorem~\ref{thm:quasiopt} or
Corollary~\ref{cor:rates} below; the constant $C_{\mathrm{PF}}(q,\beta)$ is
used only in Corollary~\ref{cor:L2}, where we convert energy control
into mesh-independent $L_p(\Om)$ control.

We need the
unbroken graph spaces on~$\Om$: for $1 < r < \infty$,
\begin{equation*}
  \W{r}{\Om} = \{ v \in L_r(\Om) :
  \beta\cdot\nabla v \in L_r(\Om) \}.
\end{equation*}
Under Assumption~\ref{asm:fill}(i), functions in
$\W{r}{\Om}$ have traces on
$\partial\Om$ in the $|\beta\cdot n|$-weighted $L_r$ sense,
smooth functions are dense in $\W{r}{\Om}$, and the
integration by parts formula extends to
\begin{equation}
  \label{eq:int-by-parts-graphspace}
  \int_\Om \beta \cdot \nabla v \; w + \int_\Om v \beta \cdot \nabla w = \int_{\partial \Om} \beta \cdot n \, v \, w,
  \qquad v \in \W{p}{\Om}, \; w \in \W{q}{\Om}
\end{equation}
(see e.g.,
\cite[Lemmas~2.1--2.2]{Canti17},
\cite[Remark~2.5, Lemma~2.6]{MugaTylerZee19}
and~\cite{BroerDahmeSteve18,DemkoGopal10,Jense04}). Let
\begin{equation*}
  W_{r,0}(\partial_\beta, \Om) := \{ v \in \W{r}{\Om} :
  v|_{\Gamma_{\mathrm{in}}} = 0 \}
\end{equation*}
denote the subspace incorporating the inflow boundary condition of
\eqref{eq:advection} in that trace sense.

\begin{lemma}[Wellposedness and curved Poincar\'e--Friedrichs
  inequality]
  \label{lem:transport}
  Suppose  $1 < r < \infty$, Assumptions~\ref{asm:beta}
  and~\ref{asm:fill} hold, and
  $C_{\mathrm{PF}}(r,\beta) := r \, \max( \|z_+\|_{L_\infty(\Om)},
  \|z_-\|_{L_\infty(\Om)} ) / c_0$. Then
  \begin{equation}
    \label{eq:curvedPF}
    \| v \|_{L_r(\Om)} \le C_{\mathrm{PF}}(r,\beta)
    \| \beta \cdot \nabla v \|_{L_r(\Om)}
    \qquad \text{for all } v \in \W{r}{\Om} \text{ vanishing on }
    \Gamma_{\mathrm{in}} \text{ or } \Gamma_{\mathrm{out}} ,
  \end{equation}
  and for every $f \in L_r(\Om)$ there is a unique
  $u \in W_{r,0}(\partial_\beta,\Om)$ with
  $\beta \cdot \nabla u = f$.
\end{lemma}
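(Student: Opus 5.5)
The plan is to derive the curved Poincaré--Friedrichs inequality \eqref{eq:curvedPF} from a weighted integration by parts against $z_\pm$. After that, obtain wellposedness from \eqref{eq:curvedPF} and the analogous estimate for the adjoint problem by a closed-range/duality argument.

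\emph{Step 1 (inequality for $v$ vanishing on $\Gamma_{\mathrm{in}}$).} Take $v$ smooth first (density of smooth functions in $\W{r}{\Om}$, under Assumption~\ref{asm:fill}(i), lets us pass to the limit at the end), with $v|_{\Gamma_{\mathrm{in}}}=0$. Since $\dive\beta=0$, the chain rule gives $\beta\cdot\nabla |v|^r = r|v|^{r-2}v\,\beta\cdot\nabla v$. Apply \eqref{eq:int-by-parts-graphspace} to the pair $|v|^r$ and $z_+$:
\[
c_0\int_\Om |v|^r = -\int_\Om |v|^r\,\beta\cdot\nabla z_+
= \int_\Om z_+\,\beta\cdot\nabla|v|^r - \int_{\partial\Om}(\beta\cdot n)\,z_+|v|^r .
\]
The boundary term vanishes, because $v=0$ on $\Gamma_{\mathrm{in}}$, $z_+=0$ on $\Gamma_{\mathrm{out}}$, and $\beta\cdot n=0$ elsewhere. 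Hölder's inequality then yields
\[
c_0\|v\|_{L_r}^r\le r\|z_+\|_{L_\infty}\|v\|_{L_r}^{r-1}\|\beta\cdot\nabla v\|_{L_r}.
\]
Dividing by $\|v\|_{L_r}^{r-1}$ gives \eqref{eq:curvedPF} with constant $r\|z_+\|_\infty/c_0$. If $v$ vanishes on $\Gamma_{\mathrm{out}}$ instead, use $z_-$ in the same way. This explains the maximum in $C_{\mathrm{PF}}$.

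\emph{Step 2 (wellposedness).} The operator $A=\beta\cdot\nabla: W_{r,0}(\partial_\beta,\Om)\to L_r(\Om)$ is bounded. By Step 1 it is injective with closed range, since $\|v\|_{L_r}+\|\beta\cdot\nabla v\|_{L_r}\lesssim\|Av\|_{L_r}$. For surjectivity it suffices, by Hahn--Banach, to show that $g\in L_{r'}(\Om)$ with $\int_\Om g\,\beta\cdot\nabla v=0$ for all $v\in W_{r,0}(\partial_\beta,\Om)$ must vanish. Testing with $v\in C_c^\infty(\Om)$ gives $\beta\cdot\nabla g=0$ distributionally, so $g\in\W{r'}{\Om}$. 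Then \eqref{eq:int-by-parts-graphspace} leaves $\int_{\partial\Om}(\beta\cdot n)\,v g=0$ for all admissible $v$. Because $\overline\Gamma_{\mathrm{in}}$ and $\overline\Gamma_{\mathrm{out}}$ are separated, we can choose $v$ equal to zero near $\Gamma_{\mathrm{in}}$ and with arbitrary smooth trace near $\Gamma_{\mathrm{out}}$; this forces $g|_{\Gamma_{\mathrm{out}}}=0$. Step 1 (outflow version, exponent $r'$) then gives $\|g\|_{L_{r'}}\le C\|\beta\cdot\nabla g\|_{L_{r'}}=0$. Uniqueness is the injectivity already shown.

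The main obstacle is the justification of the manipulations. Specifically, one needs $|v|^r$ to be an admissible argument in the integration by parts, with traces making sense in the weighted sense. One also needs the cutoff construction that localizes traces to $\Gamma_{\mathrm{out}}$ for the adjoint. I would handle both by working with smooth $v$ and invoking the cited density and trace results (\cite{Canti17,MugaTylerZee19}), relying on the separation in Assumption~\ref{asm:fill}(i).
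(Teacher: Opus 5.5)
The paper does not argue this lemma at all. It cites \cite[Lemma~2.10]{MugaTylerZee19} for \eqref{eq:curvedPF} and \cite[Theorem~A and Remark~3.1]{MugaTylerZee19} for unique solvability. Your proposal is correct, modulo one density point noted below, and reconstructs what lies behind those citations. The multiplier identity with $z_\pm |v|^r$ is the classical curved Poincar\'e argument. It produces exactly the constant $r\max(\|z_+\|_{L_\infty(\Om)},\|z_-\|_{L_\infty(\Om)})/c_0$ of the statement, which explains its form. Your closed-range plus adjoint-injectivity step, applying the outflow version of the inequality at the conjugate exponent $r'$, is the standard Banach--Ne\v{c}as--Babu\v{s}ka-type route to such well-posedness results. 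Your version buys transparency: it shows that Assumption~\ref{asm:fill}(ii) enters only through the multipliers $z_\pm$. It also shows that (i) enters through trace continuity (hence closedness of $W_{r,0}(\partial_\beta,\Om)$) and through the cutoff that isolates $\Gamma_{\mathrm{out}}$ in the adjoint argument. The citation buys brevity and outsources the functional-analytic details you flag. Of these, admissibility in \eqref{eq:int-by-parts-graphspace} is harmless for smooth $v$. Both $|v|^r\in C^1(\overline\Om)$ and $z_+$ lie in $\W{2}{\Om}$ on the bounded domain, so the formula applies with $p=q=2$. The density step, however, needs slightly more than what the paper quotes. Unconstrained density of smooth functions in $\W{r}{\Om}$ does not by itself supply smooth approximants that also vanish on $\Gamma_{\mathrm{in}}$. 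You should either invoke density of such functions in $W_{r,0}(\partial_\beta,\Om)$, or approximate by unconstrained smooth $v_n$. In the latter case you must show that the extra term $\int_{\Gamma_{\mathrm{in}}}|\beta\cdot n|\,z_+\,|v_n|^r\,ds$ tends to zero. This uses $\int_{\Gamma_{\mathrm{in}}}|\beta\cdot n|\,|v_n|^r\,ds\to 0$ (trace continuity) together with an $L_\infty$ bound on the trace of $z_+$.
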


\begin{proof}
  The inequality \eqref{eq:curvedPF} follows from
  \cite[Lemma~2.10]{MugaTylerZee19} and  unique solvability from
  \cite[Theorem~A and Remark~3.1]{MugaTylerZee19}.
\end{proof}
We first complete the prescription of the method \eqref{eq:method}
by specifying the norm whose dual appears there. As in \eqref{eq:trfunctional}, let
$Y = \W{q}{\oh}$. Observe that  the
spaces of \eqref{eq:Yh-condition} form the chain
$
  V_h \,\subseteq\, Y_h \,\subset\, Y .
$
Equip $Y$ with the
$\beta$-weighted test norm obtained by rescaling the norm
\eqref{eq:norm} element by element. Recall that
$a_K=h_K/|\beta_K|$. Then
\begin{equation}
  \label{eq:Ynorm}
  \| v \|_{Y(K)}^q
  := a_K^{-q} \, \nrmK{v}{q}{K}^q
  + \nrmK{\beta\cdot\nabla v}{q}{K}^q,
  \qquad
  \| v \|_Y^q := \sum_{K \in \oh} \| v\|_{Y(K)}^q .
\end{equation}
Linear
functionals $g$ on $Y_h$ are normed by
\begin{equation}
  \label{eq:dualnorm}
  \| g \|_{Y_h'} := \sup_{0\neq v \in Y_h}
  \frac{ g(v)}{\|v\|_Y},
\end{equation}
which is the norm used in the specification of the method~\eqref{eq:method}.

Next we set the undiscretized trial space, realizing
interface variables as functionals on $Y$, in the spirit of
\cite{CarstDemkoGopal16}. Formula~\eqref{eq:trfunctional} defines
$\Dh : \W{p}{\Om} \to Y'$ by the same element-wise sum (where
the right hand side is well defined even when  $z$ is
taken to be in $\W{p}{\Om}$ by virtue of~\eqref{eq:DK}).
Set
\begin{equation}
  \label{eq:Xdef}
  \hat X := \Dh\big( \W{p}{\Om} \big),
  \qquad
  \hat{X}_0 :=
  \Dh\big( W_{p,0}(\partial_\beta,\Om) \big),
  \qquad
  X := L_p(\Om) \times \hat{X}_0.
\end{equation}
The definitions \eqref{eq:trial} now give the  inclusions
$\Xhk\subset\hat X$, $\Xhok\subset\hat{X}_0$, and hence
$X_h\subset X$.
The DPG bilinear form~\eqref{eq:bform} extends to $X \times Y$ as
\begin{equation}
  \label{eq:extended-b}
  \bform\big( (w, \mu),\, v \big)
  = \ap{ \mu, v} - \sum_{K\in \oh}(w, \,\beta\cdot\nabla v)_{K},
  \qquad
  \text{ for any }\cl w = (w, \mu) \in X, \; v \in Y.
\end{equation}

\begin{lemma}[Injectivity]
  \label{lem:injectivity}
  Suppose Assumptions~\ref{asm:beta} and \ref{asm:fill} hold. If
  $\cl w = (w, \mu) \in X$ satisfies $\bform(\cl w, v) = 0$
  for all $v \in Y$, then $\cl w = 0$.
\end{lemma}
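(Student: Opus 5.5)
We write $\mu = \Dh z$ with $z \in W_{p,0}(\partial_\beta,\Om)$, which is possible by \eqref{eq:Xdef}. The plan has two stages. First, testing with functions compactly supported in a single element shows that $w = z$. Second, the boundary condition on $z$ and the wellposedness in Lemma~\ref{lem:transport} force $z = 0$. From $z=0$ we get $\mu = \Dh z = 0$ and $w = z = 0$, so $\cl w = 0$.

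\textbf{Step 1: $w=z$ and $\beta\cdot\nabla z = 0$ on $\Om$.} Fix $K$ and take $v \in C_c^\infty(\intr{K})$, extended by zero, so that $v \in Y$. Then $\ap{D_K z, v} = \ip{\beta\cdot\nabla z}{v}{K} + \ip{z}{\beta\cdot\nabla v}{K}$, and the only surviving terms of $\bform(\cl w, v)=0$ give
\[
\ip{\beta\cdot\nabla z}{v}{K} + \ip{z - w}{\beta\cdot\nabla v}{K} = 0 .
\]
I would then take $v \in \W{q}{K}$ arbitrary, extended by zero to the other elements; this is allowed because $Y$ is broken. Write $e := z - w \in L_p(K)$. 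The relation $\ap{D_K z, v} = \ip{w}{\beta\cdot\nabla v}{K}$ then holds for all $v \in \W{q}{K}$. Testing first with compactly supported $v$ gives $\beta\cdot\nabla e = \beta\cdot\nabla z$ in the distributional sense on $K$, so $e \in \W{p}{K}$. Subtracting, the general relation reduces to $\ap{D_K e, v} = 0$ for all $v \in \W{q}{K}$, i.e.\ $e$ has vanishing weighted trace on all of $\partial K$. For this I would use the trace and integration-by-parts theory on the element $K$: elementwise, $\beta$ is constant and inflow and outflow facets are separated. Then $e \in \W{p}{K}$ vanishes on the inflow part of $\partial K$. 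On the simplex $K$ with constant $\beta$, every streamline is a straight segment from inflow to outflow of length at most $h_K$, so \eqref{eq:curvedPF} holds on $K$ (take $z_\pm$ affine), and the problem $\beta\cdot\nabla e = g$ is uniquely solvable there.

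This is not yet enough: $\beta\cdot\nabla e = \beta\cdot\nabla z$ need not vanish. The cleaner route is to note that $\ap{D_K e, v} = 0$ for all $v$ says the trace of $e$ vanishes on the whole boundary. Then $\ip{\beta\cdot\nabla e}{v}{K} = -\ip{e}{\beta\cdot\nabla v}{K}$ for every $v \in \W{q}{K}$, with no boundary term. Choosing $v$ as the solution of $\beta\cdot\nabla v = |e|^{p-2}e$ in $K$ with zero outflow data would give $\|e\|^p_{L_p(K)} = -\ip{\beta\cdot\nabla e}{v}{K}$, which is not yet conclusive. So instead I will work globally, with $\beta\cdot\nabla z$ in the role of the data, in Step 2.

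\textbf{Step 2 (global).} Choose $v \in \W{q}{\Om} \cap W_{q}(\partial_\beta,\Om)$ with $v=0$ on $\Gamma_{\mathrm{out}}$, which is an element of $Y$. Summing $\ap{D_K z, v}$ over $K$ and using \eqref{eq:int-by-parts-graphspace} gives $\sum_K \ap{D_K z, v} = \int_{\partial\Om} \beta\cdot n\, z v = 0$, because $z=0$ on $\Gamma_{\mathrm{in}}$ and $v=0$ on $\Gamma_{\mathrm{out}}$. Hence $\ip{w}{\beta\cdot\nabla v}{\Om} = 0$ for all such $v$. By Lemma~\ref{lem:transport} applied to the reversed flow $-\beta$, which is justified by the symmetric roles of $z_\pm$, we can solve $\beta\cdot\nabla v = |w|^{p-2}w \in L_q(\Om)$ with $v=0$ on $\Gamma_{\mathrm{out}}$. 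This gives $\|w\|_{L_p(\Om)}^p = 0$, so $w = 0$.

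Returning to Step 1 with $w=0$, we get $\ap{D_K z, v} = 0$ for all $v\in\W{q}{K}$. Taking compactly supported $v$ yields $\beta\cdot\nabla z = 0$ on each $K$, and hence on $\Om$ since $z \in \W{p}{\Om}$. Since $z \in W_{p,0}(\partial_\beta,\Om)$, the uniqueness part of Lemma~\ref{lem:transport} (or directly \eqref{eq:curvedPF}) gives $z = 0$, and therefore $\mu = \Dh z = 0$.

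The main obstacle is the solvability of the adjoint problem $\beta\cdot\nabla v = g$ with outflow data in $\W{q}{\Om}$. I would obtain it by applying Lemma~\ref{lem:transport} to $-\beta$. Assumption~\ref{asm:fill} is symmetric under $\beta\mapsto-\beta$: the sets $\Gamma_{\mathrm{in}}$ and $\Gamma_{\mathrm{out}}$ swap, and so do $z_+$ and $z_-$.
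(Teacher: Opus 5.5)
Your core argument (Step~2 followed by the conclusion $\mu=0$) is correct and takes a different route from the paper. As written, however, the final step contains a false claim that must be replaced.

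\textbf{Comparison with the paper.} The paper works from the elements outward.
\begin{itemize}
\item Testing with $v\in C_c^\infty(K)$ gives $\beta\cdot\nabla w|_K=0$, so the hypothesis becomes $\ap{\Dh(z-w),v}=0$ for all $v\in Y$.
\item It then shows that $\zeta:=z-w$ lies in $\W{p}{\Om}$, because its global streamline derivative equals the elementwise one.
\item Via \eqref{eq:int-by-parts-graphspace}, $\zeta$ has zero trace on $\Gamma_{\mathrm{in}}$. Hence $w=z-\zeta\in W_{p,0}(\partial_\beta,\Om)$ with $\beta\cdot\nabla w=0$, and the \emph{uniqueness} part of Lemma~\ref{lem:transport} (flow $\beta$, exponent $p$) gives $w=0$.
\end{itemize}
You argue by duality instead. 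Testing only with $v\in\W{q}{\Om}$ vanishing on $\Gamma_{\mathrm{out}}$ makes $\ap{\mu,v}$ vanish through the inflow condition on $z$. The \emph{existence} part of Lemma~\ref{lem:transport} for $-\beta$ with exponent $q$ (the same device the paper uses in Corollary~\ref{cor:L2}) then lets you take $\beta\cdot\nabla v=|w|^{p-2}w\in L_q(\Om)$, whence $\nrmK{w}{p}{\Om}^p=0$.

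Your route is shorter because it needs no information about $w$ beyond $w\in L_p(\Om)$: neither $\beta\cdot\nabla w=0$ nor any trace of $w$ or $z-w$. So Step~1 can be deleted outright. It also contains incorrect side remarks; for instance, when $N\ge2$ the inflow and outflow facets of a simplex are never separated, since any two facets meet. The paper's route relies only on uniqueness for the forward problem, rather than on solvability of the reversed problem. Under Assumption~\ref{asm:fill} both are supplied by Lemma~\ref{lem:transport}, so nothing is lost either way.

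\textbf{The flawed final step.} After obtaining $w=0$, you claim that compactly supported $v$ give $\beta\cdot\nabla z=0$ and hence $z=0$. Both claims are wrong.
\begin{itemize}
\item For $v\in C_c^\infty(K)$, the identity $\ap{D_K z,v}=\ip{\beta\cdot\nabla z}{v}{K}+\ip{z}{\beta\cdot\nabla v}{K}=0$ holds for \emph{every} $z\in\W{p}{K}$, by the definition of the distributional derivative. Such tests therefore carry no information.
\item $z=0$ is false in general, because $\mu$ does not determine its representative $z$. Take any nonzero $z\in C_c^\infty(K)$, extended by zero. It lies in $W_{p,0}(\partial_\beta,\Om)$ and satisfies $\Dh z=0$, so $\cl w=(0,\Dh z)=0$ satisfies the hypothesis, yet $z\neq0$. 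The same example refutes the claim $w=z$ in your overview.
\end{itemize}
The fix is one line, and it is how the paper ends: once $w=0$, the hypothesis reads $\ap{\mu,v}=\bform(\cl w,v)=0$ for all $v\in Y$, i.e.\ $\mu=0$ in $Y'$. With that replacement your proof is complete.
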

\begin{proof}
  We use the space of (smooth compactly supported) Schwartz test
  functions on $K$ and $\Om$, denoted by $C^\infty_c(K)$ and
  $C^\infty_c(\Om)$, respectively.
  By \eqref{eq:Xdef}, $\mu = \Dh z$ for some
  $z \in W_{p,0}(\partial_\beta,\Om)$.
  Fix $K \in \oh$ and let $v \in C^\infty_c(K)$, extended by
  zero to an element of $Y$. Then, since
  $\ap{D_K z, v} = 0$, we have   $0 = \bform(\cl w, v) =
  -\ip{w}{\beta\cdot\nabla v}{K}$ for all $v \in C_c^\infty(K)$.
  Since $\beta_K$ is constant, this implies that the distributional
  derivative $\beta\cdot\nabla w$ vanishes on $K$. In particular
  $w|_K \in \W{p}{K}$ with
  \begin{equation}
    \label{eq:betanablawK}
    \beta\cdot\nabla w|_K = 0, \quad \text{ for every }K \in \oh.
  \end{equation}
  Using this and~\eqref{eq:DK},
  $
    -\ip{w}{\beta\cdot\nabla v}{K}
    = \ip{\beta \cdot \nabla w}{v}{K} - \ap{D_K w, v}
    = - \ap{D_K w, v},
  $ so
  \begin{equation}
    \label{eq:inj-step2}
    0 = \bform(\cl w, v)
    = \ap{D_h ( z - w ), v},
    \qquad \text{ for all } v \in Y.
  \end{equation}

  Let $\zeta := z - w \in \W{p}{\oh}$.  The distributional derivative
  $\beta\cdot\nabla \zeta$ coincides with the element-wise one: indeed,
  for any $v \in C_c^\infty(\Om)$, we have
  $(\beta\cdot\nabla \zeta)(v) = - (\zeta, \beta\cdot \nabla v)_\Om =
  (\beta \cdot \nabla \zeta, v)_\Om$ using~\eqref{eq:DK}
  and~\eqref{eq:inj-step2}. By~\eqref{eq:int-by-parts-graphspace}
  and~\eqref{eq:inj-step2}, now taken for $v \in \W{q}{\Om} \subset Y$,
  $
    0 = \ip{\beta\cdot\nabla \zeta}{v}{\Om}
    + \ip{\zeta}{\beta\cdot\nabla v}{\Om}
    = \int_{\partial\Om} (\beta \cdot n)\, \zeta\, v,
  $
  so the trace of $\zeta$ must vanish on $\Gamma_{\mathrm{in}}$.

  Consequently $w = z - \zeta \in W_{p,0}(\partial_\beta,\Om)$ (since the  trace of both $z$ and $\zeta$ on
  $\Gamma_{\mathrm{in}}$ vanishes)
  and $\beta\cdot\nabla w = 0$. The uniqueness part of
  Lemma~\ref{lem:transport} now implies $w = 0$. Returning to
  \eqref{eq:inj-step2} with $w = 0$, we find
  $\ap{\mu, v} = \ap{D_h z, v} = 0$ for all
  $v \in Y$, i.e., $\mu = 0$.
\end{proof}

\subsection{Stability of the practical DPG method}

Since $X_h=P_{k-1}(\oh)\times\Xhok\subset
P_{k-1}(\oh)\times\Xhk$, Lemma~\ref{lem:fortin-property} gives
\begin{equation}
  \label{eq:fortin-Xh}
  \bform(\cl w_h, v - \varPi_h v) = 0,
  \qquad \cl w_h \in X_h, \quad v \in Y,
\end{equation}
the key property that will get us stability.
Define the ``energy norm'' and a discrete counterpart, both for any
$\cl w \in X$, by
\begin{equation}
  \label{eq:energy}
  \| \cl w \|_{\Enrm} :=
  \sup_{0 \neq v \in Y} \frac{\bform(\cl w, v)}{\|v\|_Y},
  \qquad
  \| \cl w \|_{\Enrm,h} :=
  \sup_{0 \neq v_h \in Y_h} \frac{\bform(\cl w, v_h)}{\|v_h\|_Y}.
\end{equation}
Since
$Y_h \subset Y$,
\begin{equation}
  \label{eq:Eh-le-E}
  \|\cl w\|_{\Enrm,h} \le \|\cl w\|_{\Enrm}.
\end{equation}
By Lemma~\ref{lem:injectivity}, $\| \cdot \|_{\Enrm}$ is a norm
on $X$. That $\|\cdot\|_{\Enrm,h}$ is a norm on
$X_h$ follows by the Fortin property:
if $\cl x_h \in X_h$ satisfies
$\| \cl x_h \|_{\Enrm,h} = 0$, then $0 = \bform(\cl x_h, \varPi_h v) = \bform(\cl x_h, v)$ for all $v \in Y$
by~\eqref{eq:fortin-Xh}, so $\cl x_h = 0$ by
Lemma~\ref{lem:injectivity}.
We begin with a quick observation that the Fortin operator is bounded also in the rescaled test norm~\eqref{eq:Ynorm}.


\begin{corollary}
  \label{cor:fortin-Y}
  In the setting of Theorem~\ref{thm:main},
  there is a $C_\varPi = C_\varPi(N, k, q, \theta_0, \kappa_0)$,
  independent of $\theta_0$ when $k = 1$, such that
  $\| \varPi_h v \|_Y \le C_\varPi \| v \|_Y$ for all $v \in Y$.
  The same conclusion holds for the mean-preserving operator:
  there is a constant $C_{\varPi^{++}}$ with the same dependencies
  such that
  \begin{equation}
    \label{eq:piplusplus-Y-bound}
    \|\varPi_h^{++}v\|_Y
    \le C_{\varPi^{++}}\|v\|_Y,
    \qquad v\in Y.
  \end{equation}
\end{corollary}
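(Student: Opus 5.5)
The plan is to observe that the rescaled norm \eqref{eq:Ynorm} is, on each element, equivalent to $a_K^{-1}$ times the norm $\tnrm{\cdot}{q,K}$ of \eqref{eq:norm}, with equivalence constants depending only on $q$. Then the element-wise bounds \eqref{eq:varPi-local-bound} and \eqref{eq:piplusplus-bound} transfer directly. Concretely, since $a_K = h_K/|\beta_K|$, we have
\begin{equation*}
  a_K^{-1}\tnrm{v}{q,K}
  = a_K^{-1}\nrmK{v}{q}{K} + \nrmK{\beta\cdot\nabla v}{q}{K}.
\end{equation*}
For nonnegative reals $x,y$ the elementary inequalities $(x^q+y^q)^{1/q} \le x+y \le 2^{1-1/q}(x^q+y^q)^{1/q}$ then give
\begin{equation*}
  \|v\|_{Y(K)} \le a_K^{-1}\tnrm{v}{q,K} \le 2^{1-1/q}\|v\|_{Y(K)}
  \qquad \text{for all } v\in \W{q}{K}.
\end{equation*}

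Next I would apply Theorem~\ref{thm:main} on each $K$, and multiply \eqref{eq:varPi-local-bound} by the positive scalar $a_K^{-1}$. The key point is that the weight $a_K$ is the same on both sides, because it depends only on $K$ and $\beta_K$ and not on the function. This gives
\begin{equation*}
  \|\varPi_K v\|_{Y(K)} \le a_K^{-1}\tnrm{\varPi_K v}{q,K}
  \le C a_K^{-1}\tnrm{v}{q,K} \le 2^{1-1/q} C \|v\|_{Y(K)} .
\end{equation*}
Raising this to the $q$-th power and summing over $K\in\oh$ yields $\|\varPi_h v\|_Y \le C_\varPi\|v\|_Y$ with $C_\varPi = 2^{1-1/q}C$. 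This constant inherits the dependencies of $C$ in Theorem~\ref{thm:main}, including independence from $\theta_0$ when $k=1$.

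For \eqref{eq:piplusplus-Y-bound} the argument is identical: replace \eqref{eq:varPi-local-bound} by the local bound \eqref{eq:piplusplus-bound} of Theorem~\ref{thm:kone-plusplus}(c). That bound holds for every $k\ge1$ under the same hypotheses, with a constant of the same dependencies. There is no real obstacle here. The only point needing care is that the bounds being used are local, element-by-element estimates, so that the mesh-dependent weights $a_K^{-1}$ can be factored out element-wise. A global $\tnrm{\cdot}{q,\oh}$ bound alone would not suffice, since $a_K$ varies over the mesh.
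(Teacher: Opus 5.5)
Your proposal is correct and follows the paper's proof essentially verbatim: multiply the local bound of Theorem~\ref{thm:main} (respectively \eqref{eq:piplusplus-bound}) by $a_K^{-1}$, compare the $\ell_1$ and $\ell_q$ norms on $\R^2$, then raise to the $q$-th power and sum over the mesh. Your constant $2^{1-1/q}C$ is exactly the paper's $C_\varPi = 2^{1/p}C$.
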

\begin{proof}
  Multiplying the local bound
  $\tnrm{\varPi_K v}{q,K} \le C \tnrm{v}{q,K}$ of
  Theorem~\ref{thm:main} by $a_K^{-1}$ gives
  \begin{equation*}
    a_K^{-1}\nrmK{\varPi_K v}{q}{K}
    + \nrmK{\beta \cdot \nabla \varPi_K v}{q}{K}
    \,\le\, C \Big( a_K^{-1} \nrmK{v}{q}{K}
    + \nrmK{\beta\cdot\nabla v}{q}{K} \Big) ,
  \end{equation*}
  so $\|\varPi_K v\|_{Y(K)} \le 2^{1/p}\, C\, \|v\|_{Y(K)}$ by the
  equivalence $(a^q + b^q)^{1/q} \le a + b \le 2^{1-1/q}(a^q +
  b^q)^{1/q}$ of the $\ell_1$ and $\ell_q$ norms on $\R^2$. Raise to
  the $q$-th power, sum over $K$, and set
  $C_\varPi := 2^{1/p}\, C$.
  Applying the same argument to \eqref{eq:piplusplus-bound} proves
  \eqref{eq:piplusplus-Y-bound}.
\end{proof}

Let $u \in W_{p,0}(\partial_\beta,\Om)$ solve \eqref{eq:advection}
(Lemma~\ref{lem:transport}) for the data $f$ of $\ell$
in~\eqref{eq:method}. The exact pair
$
  \cl u := (u, \Dh u) \in X
$
is consistent: by~\eqref{eq:DK},
\begin{equation}
  \label{eq:consistency}
  \bform( \cl u, v )
  = \sum_{K \in \oh} \big( -\ip{u}{\beta\cdot\nabla v}{K}
  + \ip{\beta\cdot\nabla u}{v}{K} + \ip{u}{\beta\cdot\nabla v}{K}
  \big)
  = \ip{f}{v}{\Om} = \ell(v)
\end{equation}
for all $v \in Y$. By \eqref{eq:consistency},
$\| \ell - \bform(\cl w_h, \cdot) \|_{Y_h'} = \| \cl u -
\cl w_h \|_{\Enrm,h}$, so \eqref{eq:method} is equivalent to
\begin{equation}
  \label{eq:method-Eh}
  \| \cl u - \cl u_h \|_{\Enrm,h}
  = \min_{\cl w_h \in X_h}
  \| \cl u - \cl w_h \|_{\Enrm,h}.
\end{equation}
The existence of this minimizer and its quasioptimality in $\| \cdot \|_{\Enrm}$ is proved next.

\begin{theorem}[Quasioptimality]
  \label{thm:quasiopt}
  Let $1 < p < \infty$. Suppose Assumptions~\ref{asm:beta} and
  \ref{asm:fill} hold,   $Y_h$ satisfy~\eqref{eq:Yh-condition},
  the shape regularity bound
  \eqref{eq:shapereg} holds, and that either
  Assumption~\ref{asm:theta} holds or $k = 1$. Let
  $f \in L_p(\Om)$
  and let $C_\varPi$ be the constant of
  Corollary~\ref{cor:fortin-Y}. Then the discrete solution
  $\cl u_h$ of \eqref{eq:method} exists, is unique, and
  satisfies
  \begin{equation}
    \label{eq:quasiopt-E}
    \| \cl u - \cl u_h \|_{\Enrm}
    \,\le\, ( 1 + 2 C_\varPi )
    \inf_{\cl w_h \in X_h}
    \| \cl u - \cl w_h \|_{\Enrm} .
  \end{equation}
\end{theorem}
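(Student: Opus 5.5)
The plan is to run the standard Fortin argument for residual minimization (as in \cite{GopalQiu14}), using the Fortin operator to compare the discrete and continuous energy norms on $X_h$. The key intermediate bound is
\begin{equation*}
  \|\cl w_h\|_{\Enrm} \le C_\varPi \|\cl w_h\|_{\Enrm,h}, \qquad \cl w_h \in X_h .
\end{equation*}
To prove it, fix $v \in Y$. By \eqref{eq:fortin-Xh} we have $\bform(\cl w_h, v) = \bform(\cl w_h, \varPi_h v)$, and $\varPi_h v \in V_h \subseteq Y_h$ by \eqref{eq:Yh-condition}. Hence
\begin{equation*}
  \bform(\cl w_h, v) \le \|\cl w_h\|_{\Enrm,h}\,\|\varPi_h v\|_Y \le C_\varPi \|\cl w_h\|_{\Enrm,h}\,\|v\|_Y
\end{equation*}
by Corollary~\ref{cor:fortin-Y}. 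Taking the supremum over $v$ gives the bound. Combined with \eqref{eq:Eh-le-E}, this shows that $\|\cdot\|_{\Enrm}$ and $\|\cdot\|_{\Enrm,h}$ are equivalent on $X_h$.

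For existence, I would use the reformulation \eqref{eq:method-Eh}. The map $\cl w_h \mapsto \|\cl u - \cl w_h\|_{\Enrm,h}$ is continuous and convex on the finite dimensional space $X_h$. It is coercive because $\|\cdot\|_{\Enrm,h}$ is a norm on $X_h$, as noted before the corollary via the Fortin property and Lemma~\ref{lem:injectivity}. So a minimizer exists.

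Uniqueness is the main obstacle. For $p \ne 2$ the seminorm $\|\cdot\|_{\Enrm,h}$ on $X_h + \operatorname{span}\{\cl u\}$ is a sup of linear functionals over $Y_h$, and $Y_h$ with the $\ell_q$-type norm \eqref{eq:Ynorm} is strictly convex. I would identify $\|g\|_{Y_h'}$ with the norm of the dual space of the strictly convex, finite dimensional space $(Y_h,\|\cdot\|_Y)$. For $1<q<\infty$, $L_q$ is uniformly convex and smooth, and the relevant dual norm is strictly convex. The residual map $\cl w_h \mapsto \ell - \bform(\cl w_h,\cdot)|_{Y_h}$ is affine and injective on $X_h$, again by the norm property. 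Suppose two distinct minimizers existed. Their residuals would be distinct elements of $Y_h'$ with equal minimal norm, and their midpoint would have strictly smaller norm, a contradiction. The point that needs care is that the dual of a smooth finite dimensional space is strictly convex. This is standard duality in finite dimensions, and I would cite \cite{MugaTylerZee19,MugaZee20} for it.

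For quasioptimality, take any $\cl w_h \in X_h$. The triangle inequality gives
\begin{equation*}
  \|\cl u-\cl u_h\|_{\Enrm} \le \|\cl u-\cl w_h\|_{\Enrm}+\|\cl w_h-\cl u_h\|_{\Enrm}.
\end{equation*}
By the key bound, the last term is at most $C_\varPi\|\cl w_h-\cl u_h\|_{\Enrm,h}$. A second triangle inequality bounds this by
\begin{equation*}
  C_\varPi\big(\|\cl u-\cl w_h\|_{\Enrm,h}+\|\cl u-\cl u_h\|_{\Enrm,h}\big).
\end{equation*}
By minimality \eqref{eq:method-Eh}, this is at most $2C_\varPi\|\cl u-\cl w_h\|_{\Enrm,h}$, and by \eqref{eq:Eh-le-E} it is at most $2C_\varPi\|\cl u-\cl w_h\|_{\Enrm}$. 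Hence $\|\cl u-\cl u_h\|_{\Enrm} \le (1+2C_\varPi)\|\cl u-\cl w_h\|_{\Enrm}$, and taking the infimum over $\cl w_h \in X_h$ gives \eqref{eq:quasiopt-E}.
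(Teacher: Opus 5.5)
Your proposal is correct and follows the paper's argument: existence by convexity, continuity and coercivity on $X_h$, uniqueness by the midpoint argument with strict convexity of $\|\cdot\|_{Y_h'}$, and quasioptimality from the Fortin property, the bound $C_\varPi$, minimality \eqref{eq:method-Eh}, and \eqref{eq:Eh-le-E}. The strict convexity of $\|\cdot\|_{Y_h'}$ comes from the smoothness of $\|\cdot\|_Y$ on $Y_h$ as a subspace of an $L_q$ space, not from the strict convexity of $Y_h$; you eventually say this, as the paper does. The only difference is cosmetic: you first isolate the discrete stability bound $\|\cl w_h\|_{\Enrm}\le C_\varPi\|\cl w_h\|_{\Enrm,h}$ on $X_h$ and then apply two triangle inequalities, whereas the paper inserts $\pm\cl w_h$ and $\pm\varPi_h v$ directly into $\bform(\cl u-\cl u_h,v)$, and both routes give the constant $1+2C_\varPi$.
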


\begin{proof}
  The map
  $\cl w_h \mapsto \| \ell - \bform(\cl w_h, \cdot)
  \|_{Y_h'} = \| \cl u - \cl w_h\|_{\Enrm, h}$ is
  convex, continuous and coercive on $X_h$,
  so minimizers of~\eqref{eq:method} exist.
  If $\cl u_1, \cl u_2$ are two
  minimizers, their midpoint is also a minimizer, so the triangle
  inequality holds with equality for the two residuals
  $\|\ell - \bform(\cl u_i, \cdot)\|_{Y_h'}$.
  Strict convexity of $\|\cdot \|_{Y_h'}$ (since $\| \cdot \|_Y$
  restricted to $Y_h$ is smooth, being isometric to a subspace of an
  $L_q$ space with $1 < q < \infty$)  then implies  that the two
  residuals must coincide as functionals on $Y_h$, i.e.\
  $\| \cl u_1 - \cl u_2 \|_{\Enrm,h} = 0$, so
  $\cl u_1 = \cl u_2$.
  Thus the discrete solution $\cl u_h$ of \eqref{eq:method} exists and is unique.
  
  To prove quasioptimality, let
  $\delta := \cl u - \cl u_h$ and fix a
  $\cl w_h \in X_h$. For any $v \in Y$, we have  $\varPi_h v \in V_h \subseteq Y_h$ by~\eqref{eq:Yh-condition}.
  Insert
  $\pm \cl w_h$ and $\pm \varPi_h v$ and use
  \eqref{eq:fortin-Xh} on $\cl w_h - \cl u_h \in X_h$ to get
  \begin{align*}
    \bform( \delta, v )
    &= \bform( \cl u - \cl w_h,\, v - \varPi_h v )
    + \bform( \cl w_h - \cl u_h,\, v - \varPi_h v )
    + \bform( \delta,\, \varPi_h v )
    \\
    &= \bform( \cl u - \cl w_h,\, v - \varPi_h v )
    + \bform( \delta,\, \varPi_h v ) .
  \end{align*}
  By the definitions \eqref{eq:energy} and
  Corollary~\ref{cor:fortin-Y},
  \begin{align*}
    \bform( \cl u - \cl w_h, v - \varPi_h v )
    &\le \| \cl u - \cl w_h \|_{\Enrm}\,
    \| v - \varPi_h v \|_Y
    \le ( 1 + C_\varPi )\,
    \| \cl u - \cl w_h \|_{\Enrm}\, \| v \|_Y ,
    \\
    \bform( \delta, \varPi_h v )
    &\le \| \delta \|_{\Enrm,h}\, \| \varPi_h v \|_Y
    \le C_\varPi\, \| \delta \|_{\Enrm,h}\, \| v \|_Y .
  \end{align*}
  Dividing by $\|v\|_Y$ and taking the supremum over $v \in Y$,
  \begin{equation*}
    \| \delta \|_{\Enrm}
    \le ( 1 + C_\varPi ) \| \cl u - \cl w_h \|_{\Enrm}
    + C_\varPi \| \delta \|_{\Enrm,h} .
  \end{equation*}
  Finally, by \eqref{eq:method-Eh} and~\eqref{eq:Eh-le-E}, the last term above  is bounded by
  $
    \| \delta \|_{\Enrm,h}
    \le \| \cl u - \cl w_h \|_{\Enrm,h}
    \le \| \cl u - \cl w_h \|_{\Enrm} ,
  $
  so $\|\delta\|_{\Enrm} \le (1 + 2C_\varPi) \| \cl u - \cl w_h
  \|_{\Enrm}$, thus proving~\eqref{eq:quasiopt-E}.
\end{proof}

\begin{remark}
  The argument of Theorem~\ref{thm:quasiopt} is general and can be
  applied beyond the advection problem. Another general approach
  is the theory of~\cite{MugaZee20}, which we did not apply directly for brevity because it requires us to first prove that the  infinite-dimensional broken Banach
  formulation is  wellposed. If we had applied~\cite[Theorem~4.14]{MugaZee20}, then
 we would have obtained the  quasioptimality constant
  $1 + C_\varPi ( 1 + C_{\mathrm{AO}}(Y) )$, where
  $C_{\mathrm{AO}}(Y) \in [0,1]$ is their asymmetric-orthogonality
  constant of $Y$, instead of
  $1 + 2C_\varPi$.
\end{remark}

\begin{remark}
  In the Hilbert space case  $p = q = 2$, by the standard
  route of~\cite{GopalQiu14}, we
  obtain~\eqref{eq:quasiopt-E} with  $C_\varPi$ instead of
  $1+ 2 C_\varPi$.
\end{remark}

\subsection{Convergence rates}
\label{ssec:rates}

To obtain convergence rates from  Theorem~\ref{thm:quasiopt},
we measure the
best approximation error in a convenient norm on $X$.
Viewing  $\hat X$ as a subspace of $Y'$, define
\begin{equation}
  \label{eq:Xnorm}
  \| (w, \mu) \|_X^p := \nrmK{w}{p}{\Om}^p + \| \mu \|_{Y'}^p.
\end{equation}
\begin{lemma}
  \label{lem:trace-bound}
  Suppose Assumption~\ref{asm:beta} holds. For all
  $z \in \W{p}{\Om}$,
  $(w,\mu) \in X, \ v \in Y$,
  \begin{align}
    \label{eq:Dhz-trace-bound}
    \| \Dh z \|_{Y'}^p
    & \le\,
     \sum_{K\in\oh} \Big(
    \nrmK{z}{p}{K}^p
    + a_K^p \nrmK{\beta\cdot\nabla z}{p}{K}^p \Big) ,
    \\
    \label{eq:continuity}
    \big| \bform\big( (w,\mu), v \big) \big|
    & \le\, 2^{1/q}\, \| (w,\mu) \|_X\, \| v \|_Y.
\end{align}
\end{lemma}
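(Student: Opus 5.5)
For \eqref{eq:Dhz-trace-bound} I will bound $\ap{\Dh z, v}$ by the definition \eqref{eq:DK} and then apply H\"older's inequality twice. For $v\in Y$,
\begin{equation*}
  \ap{\Dh z, v} = \sum_{K\in\oh}\big[\ip{\beta\cdot\nabla z}{v}{K} + \ip{z}{\beta\cdot\nabla v}{K}\big].
\end{equation*}
In the first term I insert $a_K a_K^{-1}$:
\begin{equation*}
  \ip{\beta\cdot\nabla z}{v}{K} \le a_K\nrmK{\beta\cdot\nabla z}{p}{K}\, a_K^{-1}\nrmK{v}{q}{K},
\end{equation*}
and the second term is bounded by $\nrmK{z}{p}{K}\nrmK{\beta\cdot\nabla v}{q}{K}$. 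This writes each element's contribution as a sum of two products $A_1B_1+A_2B_2$, with $A_1=a_K\nrmK{\beta\cdot\nabla z}{p}{K}$, $A_2=\nrmK{z}{p}{K}$, $B_1=a_K^{-1}\nrmK{v}{q}{K}$ and $B_2=\nrmK{\beta\cdot\nabla v}{q}{K}$.

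Next I apply the discrete H\"older inequality in $\ell_p\times\ell_q$ over the combined index set (two entries per element). This gives
\begin{equation*}
  \ap{\Dh z, v} \le \Big(\sum_K A_1^p+A_2^p\Big)^{1/p}\Big(\sum_K B_1^q+B_2^q\Big)^{1/q}.
\end{equation*}
The second factor is exactly $\|v\|_Y$ by \eqref{eq:Ynorm}. Dividing by $\|v\|_Y$, taking the supremum, and raising to the $p$-th power yields \eqref{eq:Dhz-trace-bound} with constant one. No integration by parts is needed, since \eqref{eq:DK} is already the defining formula valid on the graph spaces.

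For \eqref{eq:continuity} I start from \eqref{eq:extended-b}:
\begin{equation*}
  |\bform((w,\mu),v)| \le \|\mu\|_{Y'}\|v\|_Y + \sum_K \nrmK{w}{p}{K}\nrmK{\beta\cdot\nabla v}{q}{K}.
\end{equation*}
By H\"older's inequality over elements, the sum is at most $\nrmK{w}{p}{\Om}\big(\sum_K\nrmK{\beta\cdot\nabla v}{q}{K}^q\big)^{1/q}\le \nrmK{w}{p}{\Om}\|v\|_Y$. Hence $|\bform|\le(\|\mu\|_{Y'}+\nrmK{w}{p}{\Om})\|v\|_Y$. Finally, the $\ell_1$--$\ell_p$ equivalence on $\R^2$ gives $a+b\le 2^{1-1/p}(a^p+b^p)^{1/p}=2^{1/q}\|(w,\mu)\|_X$, which is the stated constant.

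The only delicate point is keeping the weights $a_K$ paired correctly, so that the $v$-side factors reproduce precisely the $Y$ norm of \eqref{eq:Ynorm}. I would double-check the exponent bookkeeping in the mixed $\ell_p/\ell_q$ H\"older step; otherwise the argument is routine.
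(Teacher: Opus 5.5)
Your proposal is correct and follows essentially the same argument as the paper: for \eqref{eq:Dhz-trace-bound} you insert $a_K a_K^{-1}$ in the first term and apply a single $\ell_p$--$\ell_q$ H\"older step over two entries per element, and for \eqref{eq:continuity} you bound the interface term by $\|\mu\|_{Y'}\|v\|_Y$, the volume term by H\"older, and combine them via $a+b\le 2^{1/q}(a^p+b^p)^{1/p}$. The exponent bookkeeping you flagged is right: the $v$-side factors reproduce exactly $\|v\|_{Y(K)}^q$ from \eqref{eq:Ynorm}.
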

\begin{proof}
  By \eqref{eq:trfunctional}, \eqref{eq:DK}, and the
  H\"older inequality,
  \begin{align*}
    \ap{\Dh z, v}
    &= \sum_{K} \Big[
    \ip{\beta\cdot\nabla z}{v}{K} + \ip{z}{\beta\cdot\nabla v}{K}
    \Big]
    \\
    &\le \sum_K \Big[
    \Big( a_K \nrmK{\beta\cdot\nabla z}{p}{K}
    \Big) \Big( a_K^{-1}\nrmK{v}{q}{K} \Big)
    + \nrmK{z}{p}{K}\, \nrmK{\beta\cdot\nabla v}{q}{K}
    \Big]
    \\
    &\le \bigg( \sum_{K} \Big[
    \nrmK{z}{p}{K}^p
    + a_K^p \nrmK{\beta\cdot\nabla z}{p}{K}^p \Big]
    \bigg)^{1/p} \| v \|_Y .
  \end{align*}
  Continuity of $\bform$ is now immediate:
  bounding the second (volume) term of~\eqref{eq:extended-b}
  by
  H\"older inequality on each element and over the mesh,
  and the first (interface) term  by
  $\|\mu\|_{Y'} \|v\|_Y$,
  the inequality~\eqref{eq:continuity} follows.     \qedhere
\end{proof}

In particular, inequality~\eqref{eq:continuity} implies  $\|\cl w\|_{\Enrm} \le 2^{1/q} \|\cl w\|_X$, so
\eqref{eq:quasiopt-E} of Theorem~\ref{thm:quasiopt} also gives
\begin{equation}
  \label{eq:quasiopt-X}
  \| \cl u - \cl u_h \|_{\Enrm}
  \,\le\, 2^{1/q}\, ( 1 + 2 C_\varPi )
  \inf_{\cl w_h \in X_h} \| \cl u - \cl w_h \|_X .
\end{equation}
Our convergence rates are in terms of  $h := \max_{K \in \oh} h_K$. We will
use the element-wise $L_2$-orthogonal projection onto $P_j(\oh)$
extended to $L_1$, namely
let $Q_j: L_1(K) \to P_j(K)$ be defined by these moment conditions on each $K \in \oh$ for any $v \in L_1(K)$: 
\begin{equation}
  \label{eq:Q-proj-L1}
  (Q_j v, r)_K = (v, r)_K, \qquad r \in P_j(K).
\end{equation}

\begin{corollary}[Convergence rates]
  \label{cor:rates}
  In addition to the hypotheses of Theorem~\ref{thm:quasiopt},
  suppose the solution $u$ of \eqref{eq:advection} satisfies
  $u \in W_p^s(\Om)$ for an integer $1 \le s \le k+1$. Then,
  \begin{equation*}
    \| \cl u - \cl u_h \|_{\Enrm}
    \,\le\, C\, h^{\min(s, k)}\, \| u \|_{W_p^s(\Om)} ,
  \end{equation*}
  with $C = C(N, k, s, p, \theta_0, \kappa_0)$, independent of
  $\theta_0$ when $k = 1$.
\end{corollary}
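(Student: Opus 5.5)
Starting from \eqref{eq:quasiopt-X}, I would bound $\inf_{\cl w_h \in X_h} \| \cl u - \cl w_h \|_X$ by exhibiting one concrete pair $\cl w_h = (w_h, \Dh z_h)$. By \eqref{eq:Xnorm},
\[
\| \cl u - \cl w_h \|_X^p = \nrmK{u - w_h}{p}{\Om}^p + \| \Dh(u - z_h) \|_{Y'}^p .
\]
The two components are approximated separately.

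\emph{Field component.} I take $w_h := Q_{k-1} u$ from \eqref{eq:Q-proj-L1}. The standard $L_p$ stability of $Q_{k-1}$ on affine elements, combined with Bramble--Hilbert, gives $\nrmK{u - Q_{k-1}u}{p}{K} \lsim h_K^{\min(s,k)} |u|_{W_p^{\min(s,k)}(K)}$. Summing over $K$ yields $h^{\min(s,k)}\|u\|_{W_p^s(\Om)}$. This is the term that caps the rate at $\min(s,k)$, since $w_h$ only has degree $k-1$.

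\emph{Interface component.} I need $z_h \in S_{h,0}^k$ approximating $u$. Because $s \ge 1$ and $p$ may be small relative to $N$, point values need not exist. I would therefore use a Scott--Zhang type quasi-interpolant $z_h := I_h u$ onto $S_h^k$, constructed with facet averages on boundary facets so that it preserves the homogeneous inflow condition. Assumption~\ref{asm:fill}(i) ensures $\overline\Gamma_{\mathrm{in}}$ is a union of closed boundary facets (up to the characteristic set), so $I_h u$ vanishes on $\Gamma_{\mathrm{in}}$ and lies in $S_{h,0}^k$. Then $\Dh z_h \in \Xhok$. I expect this boundary-preservation detail to be the main obstacle, mainly the care needed when $\Gamma_{\mathrm{in}}$ and characteristic boundary portions meet. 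I would handle it by choosing the Scott--Zhang averaging facets inside $\Gamma_{\mathrm{in}}$ for every node on $\overline\Gamma_{\mathrm{in}}$.

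With this $z_h$, Lemma~\ref{lem:trace-bound}, \eqref{eq:Dhz-trace-bound} applied to $z = u - z_h \in \W{p}{\Om}$ gives
\[
\| \Dh(u - z_h) \|_{Y'}^p \le \sum_K \Big( \nrmK{u - z_h}{p}{K}^p + a_K^p \nrmK{\beta\cdot\nabla (u-z_h)}{p}{K}^p \Big).
\]
Since $a_K |\beta_K| = h_K$, the second term is bounded by $h_K^p |u - z_h|_{W_p^1(K)}^p$. The local Scott--Zhang estimates on element patches give $\nrmK{u - z_h}{p}{K} + h_K |u - z_h|_{W^1_p(K)} \lsim h_K^{s} |u|_{W_p^s(\omega_K)}$ for $1 \le s \le k+1$. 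Summing over $K$, with bounded patch overlap from shape regularity, gives $h^s \|u\|_{W_p^s(\Om)}$. This is at least as good as $h^{\min(s,k)}$ because $h$ is bounded (absorb the $\operatorname{diam}\Om$ powers into $C$). Note the weight $a_K$ exactly cancels the $|\beta|$ factor, so no dependence on $\beta$ enters.

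Combining the two components in \eqref{eq:quasiopt-X} yields the claim. The constant inherits $C_\varPi$ from Corollary~\ref{cor:fortin-Y}, which is why it depends on $\theta_0$ only when $k \ge 2$.
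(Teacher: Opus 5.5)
Your proposal is correct and follows the paper's proof essentially step for step. It uses the same pair $(Q_{k-1}u,\Dh(I_h u))$ with an inflow-preserving Scott--Zhang interpolant, applies \eqref{eq:Dhz-trace-bound} so that $a_K|\beta_K| = h_K$ cancels the flow magnitude, and sums local patch estimates using finite overlap. As a small refinement, you need not absorb powers of $\operatorname{diam}\Om$ into $C$, which would add a dependence not listed in the statement: apply the Scott--Zhang estimate directly at order $\min(s,k)\in[1,k+1]$ instead. Also, the facet structure of $\Gamma_{\mathrm{in}}$ comes from the piecewise constancy of $\beta$ (Assumption~\ref{asm:beta}), not from Assumption~\ref{asm:fill}(i).
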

\begin{proof}
  By \eqref{eq:quasiopt-X} it suffices to exhibit one
  $\cl w_h \in X_h$ with
  $\| \cl u - \cl w_h \|_X \le C h^{\min(s,k)}
  \|u\|_{W_p^s(\Om)}$. Take
  $\cl w_h := \big( Q_{k-1} u,\; \Dh(I_h u)
    \big),
  $
  where $Q_{k-1}$ is as in~\eqref{eq:Q-proj-L1}
  and $I_h$ is a Scott--Zhang
  interpolation operator \cite{ScottZhang90} of degree $k$, chosen
  so as to preserve the homogeneous boundary values on
  $\Gamma_{\mathrm{in}}$. Write $\varsigma_0 := \min(s,k)$ and
  $\varsigma_1 := \min(s, k+1)$.  For the interior variable, the Bramble--Hilbert lemma immediately gives
  \begin{equation}
    \label{eq:field-approx}
    \nrmK{u - Q_{k-1} u}{p}{\Om}
    \,\lesssim\, h^{\varsigma_0} | u |_{W_p^{\varsigma_0}(\Om)} ,
  \end{equation}
  since $P_{k-1}(K) \supseteq P_{\varsigma_0 - 1}(K)$.

  For the trace component, note that
  $z := u - I_h u \in W_{p,0}(\partial_\beta,\Om)$, because
  $u \in W_{p,0}(\partial_\beta,\Om)$
  and $I_h u$ is a continuous piecewise polynomial vanishing on
  $\Gamma_{\mathrm{in}}$. By~\eqref{eq:Dhz-trace-bound} of
  Lemma~\ref{lem:trace-bound} and
  $\nrmK{\beta\cdot\nabla z}{p}{K} \le |\beta_K| \,|z|_{W_p^1(K)}$,
  \begin{equation*}
    \| \Dh z \|_{Y'}^p
    \le \sum_K \Big( \nrmK{z}{p}{K}^p + h_K^p |z|_{W_p^1(K)}^p\Big).
  \end{equation*}
  The simultaneous approximation and stability properties~\cite{ScottZhang90}  of the
  Scott--Zhang operator in $W_p^s$ yield, for
  $1 \le \varsigma_1 \le k+1$, $
    \nrmK{z}{p}{K} + h_K | z |_{W_p^1(K)}
    \le C\, h_K^{\varsigma_1} | u |_{W_p^{\varsigma_1}(\omega_K)} ,
    $
  where $\omega_K$ is the patch of elements meeting $K$; summing
  over $K$ and using the finite overlap of the patches (by shape
  regularity),
  \begin{equation}
    \label{eq:trace-approx}
    \| \Dh u - \Dh( I_h u) \|_{Y'}
    \le C\, h^{\varsigma_1}\, | u |_{W_p^{\varsigma_1}(\Om)} .
  \end{equation}
  Combining \eqref{eq:field-approx} and \eqref{eq:trace-approx}
  with $\varsigma_0 \le \varsigma_1$ completes the proof.
\end{proof}

The energy norm above is mesh-dependent. We now convert this into a convergence rate
in the mesh-independent $L_p(\Om)$ norm. The
conversion costs one power of $h$. Let
\begin{equation}
  \label{eq:bar-beta}
  \bar\beta := \max_K |\beta_K|.
\end{equation}
Some results use quasi-uniform meshes where
there is a $c_{\mathrm{qu}}>0$ such that
$h_K \ge c_{\mathrm{qu}} h$ for all $K\in\oh$ with $h:=\max_K h_K$.

\begin{corollary}[Interior variable error in $L_p$]
  \label{cor:L2}
  Adopt the hypotheses of Corollary~\ref{cor:rates}, write
  $\cl u = (u, D_h u)$ and $\cl u_h = (u_h, D_h z_h)$. Then
  \begin{equation}
    \label{eq:L2-duality}
    \| u - u_h \|_{L_p(\Om)}
    \,\le\,
    \Big( 1 + C_{\mathrm{PF}}(q,\beta)^q \max_{K \in \oh}
    a_K^{-q} \Big)^{1/q}
    \, \| \cl u - \cl u_h \|_{\Enrm} .
  \end{equation}
  In particular, on quasi-uniform mesh families, with $\bar\beta$ as in \eqref{eq:bar-beta},
  \begin{equation*}
    \| u - u_h \|_{L_p(\Om)}
    \,\le\, C \big( 1 + C_{\mathrm{PF}}(q,\beta)\, \bar\beta\, h^{-1}
    \big)\, h^{\min(s,k)} \, \| u \|_{W_p^s(\Om)}.
  \end{equation*}
\end{corollary}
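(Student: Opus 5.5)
We prove \eqref{eq:L2-duality} by duality, using the curved Poincar\'e--Friedrichs inequality of Lemma~\ref{lem:transport} with exponent $q$ on the \emph{outflow} side. Write $\cl u - \cl u_h = (e, \mu)$ with $e := u - u_h \in L_p(\Om)$ and $\mu := \Dh(u - z_h) \in \hat X_0$.

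\textbf{Step 1 (dual solution).} Let $j := |e|^{p-2} e \in L_q(\Om)$, so that $\|j\|_{L_q(\Om)} = \|e\|_{L_p(\Om)}^{p-1}$ and $\ip{e}{j}{\Om} = \|e\|_{L_p(\Om)}^p$. We solve the adjoint transport problem $-\beta\cdot\nabla v = j$ in $\Om$ with $v = 0$ on $\Gamma_{\mathrm{out}}$. This reduces to Lemma~\ref{lem:transport} with $\beta$ replaced by $-\beta$: that swaps the roles of inflow and outflow, and Assumption~\ref{asm:fill} is symmetric in $z_\pm$, so $C_{\mathrm{PF}}(q,-\beta) = C_{\mathrm{PF}}(q,\beta)$. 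This yields $v \in \W{q}{\Om} \subset Y$ with $\|v\|_{L_q(\Om)} \le C_{\mathrm{PF}}(q,\beta)\|j\|_{L_q(\Om)}$.

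\textbf{Step 2 (the interface term vanishes).} We claim $\ap{\mu, v} = 0$. Since $v$ is unbroken and $u - z_h \in W_{p,0}(\partial_\beta,\Om)$, summing \eqref{eq:DK} over $K$ and applying \eqref{eq:int-by-parts-graphspace} gives
\[
\ap{\Dh(u - z_h), v} = \int_{\partial\Om} (\beta\cdot n)(u - z_h)\, v .
\]
This vanishes because $u - z_h$ is zero on $\Gamma_{\mathrm{in}}$ and $v$ is zero on $\Gamma_{\mathrm{out}}$. The one point needing care is justifying the sum of element terms as a global integration by parts: $v$ has no interelement jumps, and the element-wise sum equals $\ip{\beta\cdot\nabla (u-z_h)}{v}{\Om} + \ip{u-z_h}{\beta\cdot\nabla v}{\Om}$ because both functions lie in the global graph spaces. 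I expect this step to be the main (though mild) obstacle; the rest is bookkeeping.

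\textbf{Step 3 (conclude).} By \eqref{eq:extended-b} and Step 2,
\[
\bform(\cl u - \cl u_h, v) = -\sum_K \ip{e}{\beta\cdot\nabla v}{K} = \ip{e}{j}{\Om} = \|e\|_{L_p(\Om)}^p .
\]
Hence $\|e\|_{L_p(\Om)}^p \le \|\cl u - \cl u_h\|_{\Enrm}\, \|v\|_Y$. Now bound
\[
\|v\|_Y^q = \sum_K a_K^{-q}\nrmK{v}{q}{K}^q + \|j\|_{L_q(\Om)}^q \le \Big(1 + C_{\mathrm{PF}}(q,\beta)^q \max_K a_K^{-q}\Big)\|j\|_{L_q(\Om)}^q .
\]
Dividing by $\|j\|_{L_q(\Om)} = \|e\|_{L_p(\Om)}^{p-1}$ gives \eqref{eq:L2-duality}.

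\textbf{Quasi-uniform case.} Here $a_K^{-1} = |\beta_K|/h_K \le \bar\beta/(c_{\mathrm{qu}} h)$. Using $(1 + x^q)^{1/q} \le 1 + x$ and combining with Corollary~\ref{cor:rates} yields the stated rate.
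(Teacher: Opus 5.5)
Your proof is correct and follows the paper's duality argument essentially step for step. It uses the same ingredients: the adjoint problem for $-\beta$ with the curved Poincar\'e--Friedrichs bound, the vanishing of the interface term via \eqref{eq:int-by-parts-graphspace}, and the same bound on $\|v\|_Y$. The only difference is cosmetic: you test with the explicit duality element $j=|e|^{p-2}e$ instead of taking a supremum over arbitrary $g\in L_q(\Om)$, and this yields the identical constant.
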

\begin{proof}
  We use a duality argument. Let $g \in L_q(\Om)$ be arbitrary. Applying
  Lemma~\ref{lem:transport}, with exponent
  $q$, to the reversed advection field $-\beta$ (whose inflow
  boundary is $\Gamma_{\mathrm{out}}$, and for which
  Assumption~\ref{asm:fill}(ii) holds with the roles of $z_+$ and
  $z_-$ interchanged), we obtain $z \in \W{q}{\Om}$ with
  $-\beta \cdot \nabla z = g$ in $\Om$  and
  $z = 0$ on $\Gamma_{\mathrm{out}}.$ Then~\eqref{eq:curvedPF} gives
  $\|z\|_{L_q(\Om)} \le C_{\mathrm{PF}}(q,-\beta) \|g\|_{L_q(\Om)}
  = C_{\mathrm{PF}}(q,\beta) \|g\|_{L_q(\Om)}$, the last equality
  because the interchange of $z_+$ and $z_-$ leaves the maximum
  defining $C_{\mathrm{PF}}$ unchanged. Viewing
  $z$ as an element of the broken space $Y$,
  \begin{equation}
    \label{eq:zYbound}
    \| z \|_Y^q
    = \sum_{K} \Big[ a_K^{-q} \nrmK{z}{q}{K}^q
    + \nrmK{\beta\cdot\nabla z}{q}{K}^q \Big]
    \le \Big( 1 + C_{\mathrm{PF}}(q,\beta)^q \max_K
    a_K^{-q} \Big) \, \| g \|_{L_q(\Om)}^q .
  \end{equation}
  The trace component of $\cl u - \cl u_h$ is
  $\Dh(u - z_h)$ for a $z_h \in S_{h,0}^k$. Since  $u - z_h \in W_{p,0}(\partial_\beta,\Om)$ vanishes on  $\Gamma_{\mathrm{in}}$, while
  $z \in \W{q}{\Om}$ vanishes on $\Gamma_{\mathrm{out}}$, and $\beta\cdot n$ vanishes on the remainder of the boundary $\partial\Om$,
  the integration by parts formula~\eqref{eq:int-by-parts-graphspace} yields
  \begin{equation*}
    \ap{\Dh(u - z_h), z}
    = \int_{\partial\Om} (\beta \cdot n) (u - z_h)\, z \, ds
    = 0.
  \end{equation*}
  Hence, by~\eqref{eq:extended-b},
  \begin{equation*}
    \ip{u - u_h}{g}{\Om}
    = - \ip{u - u_h}{\beta\cdot\nabla z}{\Om}
    = \bform( \cl u - \cl u_h,\, z )
    \,\le\, \| \cl u - \cl u_h \|_{\Enrm} \, \| z \|_Y.
  \end{equation*}
  Combining with \eqref{eq:zYbound} and taking the supremum over
  $\|g\|_{L_q(\Om)} = 1$ proves \eqref{eq:L2-duality} by
  $L_p$--$L_q$ duality. The second assertion follows from
  Corollary~\ref{cor:rates}.
\end{proof}

\section{A posteriori error control}
\label{sec:apost}

The quantity minimized in \eqref{eq:method} is computable, and is a
natural a posteriori error estimator:
\begin{equation}
  \label{eq:estimator}
  \eta \,:=\, \big\| \ell - \bform( \cl u_h, \cdot\,) \big\|_{Y_h'} .
\end{equation}
In practice $\eta$ is evaluated through the discrete \emph{residual
  representative} $\varrho_h \in Y_h$ defined by
\begin{equation}
  \label{eq:residual-rep}
  \ap{J_Y \varrho_h, v} = \ell(v) - \bform(\cl u_h, v),
  \qquad \text{for all } v \in Y_h,
\end{equation}
where $J_Y : Y \to Y'$ is the normalized duality map of $(Y, \|\cdot\|_Y)$, which reduces to the linear
Riesz map when $p = 2$. The pair $(\varrho_h, \cl u_h)$ is exactly the
solution of the mixed system equivalent to \eqref{eq:method} (see
\cite[eq.~(4.2)]{MugaTylerZee19}), so $\varrho_h$ is available at no extra
cost. Since normalized duality maps preserve norms, $\eta = \|\varrho_h\|_Y$, and the
estimator localizes into element-wise computable indicators,
\begin{equation}
  \label{eq:indicators}
  \eta^q = \sum_{K \in \oh} \eta_K^q,
  \qquad
  \eta_K := \| \varrho_h \|_{Y(K)} .
\end{equation}
Set $m_k:=\max\{k-2,0\}$. Reliability is measured against the data
oscillation
\begin{equation}
  \label{eq:osc}
  \osc(f) :=
  \bigg( \sum_{K \in \oh} a_K^p\,
  \nrmK{f - Q_{m_k} f}{p}{K}^p \bigg)^{1/p},
\end{equation}
where $Q_{m_k}$ is as in~\eqref{eq:Q-proj-L1} and $a_K=h_K/|\beta_K|$ as before. For $p = 2$, estimates of the kind below were deduced from
the existence of a Fortin operator in \cite{CarstDemkoGopal14} (see
also \cite{DPGacta}). We now provide them  in the present
setting for all $1 < p < \infty$, with efficiency constant one.

\begin{theorem}[Global reliability and efficiency]
  \label{thm:apost}
  Under the hypotheses of Theorem~\ref{thm:quasiopt}, suppose in
  addition that $V_h^{++}\subseteq Y_h$. Then
  \begin{equation*}
    \eta    \;\le\; \| \cl u - \cl u_h \|_{\Enrm}
    \; \le\; \| \varPi^{++}_h\| \, \eta
    \; + \;
    \| I - \varPi^{++}_h\| \, \osc(f)
  \end{equation*}
  where the operator norms $ \| \varPi^{++}_h\|$ and
  $\| I - \varPi^{++}_h\|$ are bounded mesh-independently  by
  $C_{\varPi^{++}}$ and $1 + C_{\varPi^{++}}$, respectively.
  In particular, $\osc(f)$ vanishes if $f$ is piecewise
  constant.
\end{theorem}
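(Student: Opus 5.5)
The lower bound is immediate from the definitions: by \eqref{eq:consistency}, $\ell - \bform(\cl u_h,\cdot) = \bform(\cl u - \cl u_h,\cdot)$ on $Y$. So $\eta = \|\cl u - \cl u_h\|_{\Enrm,h}$, and \eqref{eq:Eh-le-E} gives $\eta \le \|\cl u-\cl u_h\|_{\Enrm}$. This is the efficiency with constant one.

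For reliability, fix $v\in Y$ and split $v = \varPi_h^{++}v + (v-\varPi_h^{++}v)$. Since $V_h^{++}\subseteq Y_h$, the first piece lies in $Y_h$, and
\[
\bform(\cl u-\cl u_h,\varPi_h^{++}v)\le \eta\,\|\varPi_h^{++}v\|_Y\le \|\varPi_h^{++}\|\,\eta\,\|v\|_Y .
\]
For the second piece, write $\delta:=v-\varPi_h^{++}v$. By the global Fortin property for $\varPi_h^{++}$ (Theorem~\ref{thm:kone-plusplus}(a)), $\bform(\cl u_h,\delta)=0$, since $\cl u_h\in X_h$. Consistency then gives
\[
\bform(\cl u-\cl u_h,\delta)=\ell(\delta)=\sum_K \ip{f}{\delta}{K}.
\]
Now I subtract $Q_{m_k}f$ on each element. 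This is legitimate because $\delta$ is $L_2(K)$-orthogonal to $P_{m_k}(K)$. For $k\ge2$ this is condition \eqref{eq:F2}, since $m_k=k-2$. For $k=1$ it is the mean-preservation \eqref{eq:F3}, since $m_k=0$. This is exactly why the augmented operator $\varPi^{++}$ is used rather than $\varPi_h$.

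H\"older's inequality with the weights $a_K$ then gives
\[
\sum_K \ip{f-Q_{m_k}f}{\delta}{K}\le \sum_K a_K\nrmK{f-Q_{m_k}f}{p}{K}\, a_K^{-1}\nrmK{\delta}{q}{K}\le \osc(f)\,\|\delta\|_Y .
\]
The last step is a discrete H\"older inequality over the mesh, using that $a_K^{-1}\nrmK{\delta}{q}{K}\le\|\delta\|_{Y(K)}$ by \eqref{eq:Ynorm}. Finally $\|\delta\|_Y\le\|I-\varPi_h^{++}\|\,\|v\|_Y$. Dividing by $\|v\|_Y$ and taking the supremum yields the upper bound.

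The operator norm bounds come from Corollary~\ref{cor:fortin-Y}: $\|\varPi_h^{++}\|\le C_{\varPi^{++}}$, and the triangle inequality gives $\|I-\varPi_h^{++}\|\le 1+C_{\varPi^{++}}$.

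For the last claim, suppose $f$ is piecewise constant. Then $f|_K\in P_0(K)\subseteq P_{m_k}(K)$, so $f=Q_{m_k}f$ on each element and $\osc(f)=0$.

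The only delicate point is the orthogonality of $\delta$ to $P_{m_k}(K)$ when $k=1$. Here \eqref{eq:F2} is vacuous, and one must instead invoke the mean condition \eqref{eq:F3} of $\varPi^{++}$. The remaining steps are routine.
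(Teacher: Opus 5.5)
Your proposal is correct and follows essentially the same route as the paper. Efficiency comes from consistency and \eqref{eq:Eh-le-E}, and reliability from splitting $v=\varPi_h^{++}v+(v-\varPi_h^{++}v)$: the first piece is handled via $V_h^{++}\subseteq Y_h$, and the second via the Fortin property, consistency, orthogonality to $P_{m_k}(K)$ from \eqref{eq:F2}/\eqref{eq:F3}, and weighted H\"older, with the operator-norm bounds taken from Corollary~\ref{cor:fortin-Y} exactly as in the paper.
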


\begin{proof}
  Efficiency follows trivially by \eqref{eq:consistency} and~\eqref{eq:Eh-le-E},
  $\eta = \|\cl u - \cl u_h\|_{\Enrm,h} \le \|\cl u - \cl u_h\|_{\Enrm}$.

  To prove reliability, let $\delta := \cl u - \cl u_h$ and $v \in Y$,
  and split
  $\bform(\delta, v) = \bform(\delta, \varPi_h^{++}v)
  + \bform(\delta, v - \varPi_h^{++}v)$.
  Since $\varPi_h^{++}v \in V_h^{++} \subseteq Y_h$,
  Corollary~\ref{cor:fortin-Y} gives
  \begin{equation}
    \label{eq:apost-fortin-part}
    \bform( \delta, \varPi_h^{++}v )
    = \ell( \varPi_h^{++}v ) - \bform( \cl u_h, \varPi_h^{++}v )
    \,\le\, \eta\, \| \varPi_h^{++}v \|_Y
    \,\le\, \eta\,    \| \varPi_h^{++} \| \, \| v \|_Y.
  \end{equation}
  For the second term, we start by applying the global Fortin property of
  Theorem~\ref{thm:kone-plusplus} to $\cl u_h \in X_h$,
  followed by consistency:
  \begin{align}
    \nonumber
    \bform( \delta, v - \varPi_h^{++}v )
    & = \bform( \cl u, v - \varPi_h^{++}v )
    = \ell( v - \varPi_h^{++}v )
    = \sum_{K \in \oh} \ip{f}{v - \varPi_K^{++}v}{K}
    \\ \nonumber
    & = \sum_{K} \ip{f-Q_{m_k}f}{v-\varPi_K^{++}v}{K}
    \\ \nonumber
    &\le \sum_K \big(a_K\nrmK{f-Q_{m_k}f}{p}{K}\big)
    \big(a_K^{-1}\nrmK{v-\varPi_K^{++}v}{q}{K}\big)
    \\ \label{eq:apost-osc-part}
    &\le \osc(f)\,\|I-\varPi_h^{++}\|\,\|v\|_Y.
  \end{align}
  In the second line, we have used \eqref{eq:F2} and \eqref{eq:F3},
  by which $v-\varPi_K^{++}v$ is $L_2(K)$-orthogonal to $P_{m_k}(K)$, allowing us to replace
  $f$ by $f-Q_{m_k}f$ in each summand. Afterward, we used  H\"older
  inequalities on each element and over the mesh, together with
  $a_K^{-1}\nrmK{\cdot}{q}{K}\le\|\cdot\|_{Y(K)}$.
  Adding the
  bounds~\eqref{eq:apost-fortin-part} and~\eqref{eq:apost-osc-part}, dividing by $\|v\|_Y$, and taking the
  supremum over $0 \neq v \in Y$ completes the proof.
  The bounds
  with
  $ C_{\varPi^{++}}$ follow from~\eqref{eq:piplusplus-Y-bound}.
\end{proof}

It is well known that the $\varrho_h$
solving~\eqref{eq:residual-rep} also
satisfies $\bform(\cl w_h,\varrho_h)=0$ for all $\cl w_h\in X_h$
(which is the second equation of equivalent mixed system of the
practical DPG method: see \cite[eq.~(6.7)]{DPGacta} or
\cite[eq.~(4.2b)]{MugaTylerZee19}), i.e., $\varrho_h$
is in the {\em residual representation space}
\[
    Y_h^0:=\{v\in Y_h:\bform(\cl w_h,v)=0
      \text{ for all }\cl w_h\in X_h\}.
\]
If $Y_h^0$ were trivial, then $\eta\equiv0$ would carry no
information, but this is not the case:

\begin{proposition}
  \label{prop:dimYh0}
  Under the hypotheses of Theorem~\ref{thm:apost},
  when $N \ge 2$, the dimension of the residual representation space $Y_h^0$ is not smaller than the number of mesh elements $N_{\mathrm{el}}$.
\end{proposition}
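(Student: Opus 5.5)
Since $Y_h^0$ is the kernel of the linear map $T: Y_h \to X_h'$, $v \mapsto \bform(\cdot, v)|_{X_h}$, rank--nullity gives $\dim Y_h^0 \ge \dim Y_h - \dim X_h$. It therefore suffices to show $\dim Y_h - \dim X_h \ge N_{\mathrm{el}}$. Because $V_h^{++} \subseteq Y_h$ by hypothesis, I would bound $\dim Y_h$ from below by $\dim V_h^{++}$ and $\dim X_h$ from above by a count of the trial degrees of freedom. (Working directly with $V_h^{++}$ is enough; the extra part of $Y_h$ only enlarges the kernel.)

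\emph{Step 1: lower bound on $\dim V_h^{++}$.} By Lemma~\ref{lem:structure}, $\dim V(K) = \dim \trc(\Pk(K)) + \dim P_{k-2}(K)$. By Theorem~\ref{thm:kone}(a), the sum $V(K)+P_0(K)$ is direct when $N\ge 2$, so $\dim V^+(K) = \dim V(K)+1$. Summing over the elements gives $\dim V_h^{++} \ge \dim V_h^+ = \sum_K \big(\dim\trc(\Pk(K)) + \dim P_{k-2}(K)\big) + N_{\mathrm{el}}$.

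\emph{Step 2: upper bound on $\dim X_h$.} Here $\dim X_h = \dim P_{k-1}(\oh) + \dim \Xhok$. For the second term, $\Xhok = \Dh(S^k_{h,0})$ is the image of a linear map. Since $\ap{\Dh z_h, v}$ depends only on $\trc(z_h)$, and the trace map on $S_h^k$ has kernel equal to the interior bubbles $\bigtimes_K \Pkint{k}(K)$, we get $\dim \Xhok \le \dim S_h^k - \sum_K \dim \Pkint{k}(K)$. The key inequality is then
\[
\dim S_h^k - \sum_K \dim\Pkint{k}(K) \;\le\; \sum_K \dim \trc(\Pk(K)),
\]
i.e.\ the continuous traces number no more than the element-wise broken traces; this holds because the broken trace map $S_h^k/\text{bubbles} \to \bigtimes_K \trc(\Pk(K))$ is injective. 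For the first term, $\dim P_{k-1}(\oh) = \sum_K \dim P_{k-1}(K)$, and I must compare it with $\sum_K \dim P_{k-2}(K)$. These differ by $\dim P_{k-1}(K)-\dim P_{k-2}(K)$ per element, so the naive count does not close.

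\emph{Step 3: the fix and the main obstacle.} The obstacle is that the interior trial variable has more degrees of freedom than the $P_{k-2}$ Fortin moments account for. The remedy is to recall that $w$ enters $\bform$ only through $\beta\cdot\nabla$ and the element traces. In the rank count I should therefore not use $\dim X_h$ but the rank of $T$, which equals $\dim X_h - \dim\{\cl w_h\in X_h : \bform(\cl w_h,\cdot)|_{Y_h}=0\}$; by the Fortin argument, this last kernel is trivial. So I would instead use $\operatorname{rank} T \le \dim T(X_h)$ and bound this image directly. Restricted to $V(K)$, the functional $v\mapsto \bform(\cl w_h, v)$ on $K$ is determined by the data $\big(\ap{D_K(z_h-w_h),\cdot}, (\beta\cdot\nabla w_h,\cdot)\big)$, which lies in a space of dimension at most $\dim\trc(\Pk(K)) + \dim P_{k-2}(K) = \dim V(K)$. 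More precisely, consider $T$ composed with restriction to $V_h^+$. Then $\dim Y_h^0 \ge \dim(\ker T \cap V_h^+)$, and $\ker T\cap V_h^+$ contains the kernel of the map $V_h^+\to X_h'$. I would show that the image of this map has dimension $\le \dim V_h$ by exhibiting $N_{\mathrm{el}}$ independent elements of the kernel. The natural candidates are $v_K := 1_K - \varPi_K 1_K$ for each $K$. This $v_K$ lies in $V^+(K)$ and is nonzero, because the directness of $V(K)\oplus P_0(K)$ for $N\ge2$ shows $1_K \notin V(K)$. It annihilates $X_h$ by the Fortin property (Lemma~\ref{lem:fortin-property}, since $\bform(\cl w_h, 1_K - \varPi_h 1_K)=0$), provided $\bform(\cl w_h, 1_K)$ itself is handled. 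Indeed $\bform(\cl w_h,1_K) = \ap{D_K z_h, 1}$, which is generally nonzero; so $v_K$ annihilates $X_h$ only modulo that term. I therefore expect the real work to lie in correcting this. I would take instead $v_K := c - \varPi_K c$ for constants $c$, use the identity $\bform(\cl w_h, v-\varPi_h v)=0$ valid for \emph{all} $v\in Y$ by \eqref{eq:fortin-Xh}, and note that this identity already gives $\bform(\cl w_h, 1_K-\varPi_h 1_K)=0$ directly. So $v_K\in Y_h^0$. Since these functions are supported on distinct elements and each is nonzero, they are linearly independent, and this yields $\dim Y_h^0\ge N_{\mathrm{el}}$.
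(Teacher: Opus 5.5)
Your final argument is correct, and it follows a different route from the paper's. Steps~1--2 are never used, and the worry that $\bform(\cl w_h,1_K)\neq0$ needs ``correcting'' is a red herring. The Fortin property \eqref{eq:fortin-Xh} applies to the whole difference $v-\varPi_h v$ with $v=1_K\in Y$, and your ``$c-\varPi_K c$'' is the same function up to scaling.

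The clean proof is just your last few sentences. The function $v_K:=1_K-\varPi_h 1_K$ vanishes off $K$, because $\varPi_{K'}0=0$ by linearity. Its restriction $1-\varPi_K 1$ lies in $V^+(K)\subseteq V^{++}(K)\subseteq Y_h$. It is nonzero because $V(K)\cap P_0(K)=\{0\}$ for $N\ge2$ (Theorem~\ref{thm:kone}(a)). It annihilates $X_h$ by Lemma~\ref{lem:fortin-property}. Disjoint supports then give linear independence.

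The paper instead counts dimensions. It shows that $B_h:X_h\to Y_h'$ and the analogous map into $V_h'$ are injective, using Lemma~\ref{lem:injectivity} and hence Assumption~\ref{asm:fill}. This gives $\dim Y_h^0=\dim Y_h-\dim X_h\ge\dim Y_h-\dim V_h$, and only then does it invoke the direct sum to get $\dim V_h^{++}-\dim V_h\ge N_{\mathrm{el}}$.

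Your construction is more elementary and explicit. It exhibits element-local residual representatives, needs only $V_h^+\subseteq Y_h$, and never uses the global injectivity lemma. The paper's count yields more: the exact dimension of $Y_h^0$, and a lower bound that grows with $\dim Y_h$.

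Your idea recovers that inequality as well if you apply it to all of $Y_h$. By the Fortin property, $(I-\varPi_h)Y_h\subseteq Y_h^0$. Moreover, $\varPi_h$ restricted to $Y_h$ fixes exactly $V_h$, by uniqueness in Lemma~\ref{lem:fortin-def}. Rank--nullity then gives $\dim Y_h^0\ge\dim Y_h-\dim V_h$ without any injectivity argument.
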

\begin{proof}
  Define $B_h:X_h\to Y_h'$ by
  $
    (B_h\cl w_h)(v_h):=\bform(\cl w_h,v_h),
    $
    for any $\cl w_h \in X_h, v_h \in Y_h$.
  This map is injective. Indeed, if $B_h\cl w_h=0$, then for every
  $v\in Y$, the Fortin property \eqref{eq:fortin-Xh} gives
  $
    \bform(\cl w_h,v)=\bform(\cl w_h,\varPi_hv)=0,
  $
  because $\varPi_hv\in V_h\subseteq V_h^{++}\subseteq Y_h$.
  Lemma~\ref{lem:injectivity} therefore gives $\cl w_h=0$.
  By rank-nullity theorem, rank of $B_h$ must therefore equal $\dim X_h$.
  The transpose $B_h^*:Y_h\to X_h'$ has the same rank as $B_h$, and
  its kernel is $Y_h^0$, so rank-nullity theorem  gives
  \begin{equation}
    \label{eq:dimYh0}
    \dim Y_h^0=\dim Y_h-\dim X_h.
  \end{equation}
  The same bilinear form also generates an injective map from
  $X_h\to V_h'$ by the same Fortin property and by the same argument
  as above. Hence $ \dim X_h$ cannot be larger than $\dim V_h, $ so \eqref{eq:dimYh0} implies
  \begin{equation}
    \label{eq:dimYh0-inequality}
    \dim Y_h^0 \ge \dim Y_h-\dim V_h.
  \end{equation}

  Finally,
  since $N\ge2$, Theorem~\ref{thm:kone}(a) gives the direct sum
  $V^+(K)=V(K)\oplus P_0(K)$. Because
  $V^+(K)\subseteq V^{++}(K)$, it follows that
  $
    \dim V_h^{++}
    \ge
    \dim V_h+N_{\mathrm{el}}.
  $
  This provides a lower bound on the dimension of $Y_h$ since  $V_h^{++}\subseteq Y_h$.  The  inequality~\eqref{eq:dimYh0-inequality} then implies
  $
    \dim Y_h^0
    \ge \dim V_h^{++}-\dim V_h
    \ge N_{\mathrm{el}}.
  $
\end{proof}

\section{Lowest order special case}
\label{sec:lowest}

In this section, we show how a change of the
test space metric by a scale $\sigma$  yields
optimal first-order rate in $L_2(\Om)$ for the lowest-order case
(improving upon Corollary~\ref{cor:L2}'s rate).
The analysis of this section is the first, to our knowledge, to show
the influence of such a scale~$\sigma$ on  the practical DPG  method's error.
The key ingredients  are   the
augmented Fortin  operator $\varPi_h^{++}$ and
a characterization from~\cite{DPGacta} of how the energy norm changes with $\sigma$.
The latter result is only for Hilbert
spaces, so for brevity, without attempting to extend it here,
we will restrict to $p=q=2$ in this section.

For $0<\sigma<\infty$, the modified norms are given by
\begin{equation}
  \label{eq:Ysigma}
  \| v \|_{Y,\sigma}^2
  := \sigma^{-2} \nrmK{v}{2}{\Om}^2
  + \sum_{K \in \oh} \nrmK{\beta\cdot\nabla v}{2}{K}^2 ,
  \qquad
  \| \cl w \|_{\Enrm,\sigma}
  := \sup_{0 \neq v \in Y} \frac{\bform(\cl w, v)}{\|v\|_{Y,\sigma}},
\end{equation}
for all $v \in Y:= \W{2}{\oh}$ and
$\cl w \in X = L_2(\Om) \times \hat X_0$ where
$\hat X_0 = D_h(W_{2,0}(\partial_\beta,\Om))$ as in~\eqref{eq:Xdef}.  Clearly,
the second norm in~\eqref{eq:Ysigma} is precisely the energy norm
\eqref{eq:energy} when the original test norm is replaced by the new
test norm $\|\cdot\|_{Y,\sigma}$. The subspace $Y_h$ is now considered
with the norm $\| \cdot \|_{Y, \sigma}$ and discrete dual norm
in~\eqref{eq:dualnorm} with $\|\cdot \|_Y$ replaced by
$\| \cdot \|_{Y, \sigma}$ is denoted by
$\| \cdot \|_{Y'_{h, \sigma}}$. In this section, we study
the solution $\clush = (\uhs, \muhs)$ of the residual minimization~\eqref{eq:method}
and the estimator $\etas$ of~\eqref{eq:estimator}, both obtained using
$\| \cdot \|_{Y'_{h, \sigma}}$ in place of $\| \cdot \|_{Y'_h}$.
The effect of such a DPG scale was first analyzed in \cite{GopalMugaOliva14} for the  Helmholtz equation, and later, more generally in~\cite{DPGacta}, both  only in the context of ideal DPG methods.
With our augmented Fortin operator, we can now  give the first analysis of the practical DPG method with the scale.

For $\mu\in\hat X_0$, let $\ET\mu \in W_{2,0}(\partial_\beta,\Om)$  minimize
$\nrmK{\beta\cdot\nabla z}{2}{\Om}$ over all
$z\in W_{2,0}(\partial_\beta,\Om)$ satisfying $\Dh z=\mu$. The
theory of \cite[Theorem~7.9 and
Propositions~7.10--7.11]{DPGacta}, whose bijectivity hypothesis follows
from Lemma~\ref{lem:transport}, shows that for all $\cl w = (w, \mu)$ in $X$,
\begin{equation}
  \label{eq:Xsigma-equiv}
  (1+k_\sigma)^{-1} \| \cl w \|_{\Enrm,\sigma}^2
  \le \nrmK{w}{2}{\Om}^2
  + \sigma^2 \nrmK{\beta\cdot\nabla \ET\mu}{2}{\Om}^2
  \le (1+k_\sigma) \| \cl w \|_{\Enrm,\sigma}^2 ,
\end{equation}
with $k_\sigma = \tfrac12(c_\sigma + \sqrt{c_\sigma^2 + 4c_\sigma})$
and $c_\sigma=C_{\mathrm{PF}}(2,\beta)^2/\sigma^2$. In particular, for every
$\sigma>0$,
\begin{equation}
  \label{eq:L2-scaled-energy}
  \nrmK{w}{2}{\Om}
  \le \sqrt{1+k_\sigma}\,\|(w,\mu)\|_{\Enrm,\sigma}.
\end{equation}
Write
$\oscs(f):=\sigma\nrmK{f-Q_0f}{2}{\Om}.$
Recall that  $a_K := h_K/|\beta_K|$, that $\bar\beta$ is as in \eqref{eq:bar-beta},
and that $Q_0$ is the
element-wise $L_2(\Om)$-orthogonal projection onto $P_0(\oh)$. In
the next theorem,
no non-characteristic assumption is needed.

\begin{theorem}[A priori and a posteriori estimates]
  \label{thm:kone-rates}
  Let $k = 1$ and $p = 2$, suppose Assumptions~\ref{asm:beta}
  and~\ref{asm:fill} and \eqref{eq:shapereg} hold, and that
  $V_h^{++} \subseteq Y_h$ with $Y_h$
  satisfying~\eqref{eq:Yh-condition}. Fix any $\sigma>0$
  and consider meshes with $\max_K a_K \le \sigma$.  Then, there are
  constants $C_i=C_i(N,\kappa_0)$, $i=1,2$, independent of $\beta$, $\sigma$,
  and the mesh, such that the following statements hold.
  \begin{enumerate}[label=(\alph*)]
  \item The estimator $\etas$ is efficient and reliable up to data
    oscillation,
    \begin{equation}
      \label{eq:kone-apost}
      \etas\le\|\cl u -\clush\|_{\Enrm,\sigma}
      \le C_1\etas
        +(1+C_1)\oscs(f),
    \end{equation}
    and is also a reliable indicator of the error in the interior variable,
    \begin{equation}
      \label{eq:kone-L2-apost}
      \nrmK{u-\uhs}{2}{\Om}
      \le\sqrt{1+k_\sigma}\big[
        C_1\etas
        +(1+C_1)\oscs(f)\big].
    \end{equation}
    If $f$ is piecewise constant on the mesh,  then $\oscs(f)=0$.

  \item If $u \in H^2(\Om)$, then with $\bar\beta$ as in \eqref{eq:bar-beta},
    \begin{equation}
      \label{eq:kone-L2-rate}
      \nrmK{u-\uhs}{2}{\Om}
      \,\le\, C_2(1+k_\sigma)\big(1+\sigma\bar\beta\big)\,h\,
      \| u \|_{H^2(\Om)}.
    \end{equation}
  \end{enumerate}
\end{theorem}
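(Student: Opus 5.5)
The plan is to replay the arguments of Theorems~\ref{thm:quasiopt} and~\ref{thm:apost}, now with the test norm $\|\cdot\|_{Y,\sigma}$ and the operator $\varPi_h^{++}$. For $k=1$ we have $a_K\le\sigma$, so Theorem~\ref{thm:kone-plusplus}(b) gives \eqref{eq:kone-sigma} with $\sigma_K=\sigma$ on each element. Squaring it and summing over $K$ yields
\[
\|\varPi_h^{++}v\|_{Y,\sigma}\le C\|v\|_{Y,\sigma}
\]
with $C=C(N,\kappa_0)$, independent of $\beta$ and $\sigma$, since the $\theta_0$-dependence disappears when $k=1$. The same bound then holds for $I-\varPi_h^{++}$ with constant $1+C$. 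Here the $L_2$ term is global rather than element-wise, but $\sigma^{-2}\|v\|_{L_2(\Om)}^2=\sum_K\sigma^{-2}\|v\|_{L_2(K)}^2$, so the local bounds add up. Set $C_1:=C$. This operator bound is the step everything else rests on, and the constant must not depend on $\sigma$; Theorem~\ref{thm:kone-plusplus}(b) already guarantees this.

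\emph{Part (a).} Efficiency $\etas\le\|\cl u-\clush\|_{\Enrm,\sigma}$ follows exactly as before from consistency \eqref{eq:consistency} and $Y_h\subset Y$. For reliability, split
\[
\bform(\delta,v)=\bform(\delta,\varPi_h^{++}v)+\bform(\delta,v-\varPi_h^{++}v),\qquad \delta:=\cl u-\clush .
\]
The first term is bounded by $\etas C_1\|v\|_{Y,\sigma}$. For the second, use the Fortin property together with consistency. Mean preservation \eqref{eq:F3} replaces $f$ by $f-Q_0f$, and then the global Cauchy--Schwarz inequality gives the bound
\[
\sigma\|f-Q_0f\|_{L_2(\Om)}\,\sigma^{-1}\|v-\varPi_h^{++}v\|_{L_2(\Om)}\le\oscs(f)(1+C_1)\|v\|_{Y,\sigma}.
\]
This proves \eqref{eq:kone-apost}. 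Inequality \eqref{eq:kone-L2-apost} then follows from \eqref{eq:L2-scaled-energy} applied to $\delta$. If $f$ is piecewise constant, $Q_0f=f$ and the oscillation vanishes.

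\emph{Part (b).} Repeat the proof of Theorem~\ref{thm:quasiopt} in the $\sigma$-norms with $\varPi_h^{++}$. This needs $\varPi_h^{++}v\in V_h^{++}\subseteq Y_h$, and then
\[
\|\delta\|_{\Enrm,\sigma}\le(1+2C_1)\inf_{\cl w_h\in X_h}\|\cl u-\cl w_h\|_{\Enrm,\sigma}.
\]
Combined with \eqref{eq:L2-scaled-energy} and the upper bound in \eqref{eq:Xsigma-equiv}, this gives
\[
\|u-\uhs\|_{L_2(\Om)}\lesssim(1+k_\sigma)\Big(\|u-w_h\|_{L_2(\Om)}+\sigma\|\beta\cdot\nabla\ET(\mu-\mu_h)\|_{L_2(\Om)}\Big).
\]
Take $w_h=Q_0u$ and $\mu_h=\Dh(I_hu)$, with $I_h$ a boundary-preserving Scott--Zhang interpolant, so that $I_hu\in S_{h,0}^1$. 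By the minimality defining $\ET$,
\[
\|\beta\cdot\nabla\ET(\Dh(u-I_hu))\|_{L_2(\Om)}\le\|\beta\cdot\nabla(u-I_hu)\|_{L_2(\Om)}\le\bar\beta\,C h|u|_{H^2(\Om)},
\]
while $\|u-Q_0u\|_{L_2(\Om)}\lesssim h|u|_{H^1(\Om)}$. Collecting terms, and absorbing the square-root factor using $\sqrt{1+k_\sigma}\le 1+k_\sigma$, gives \eqref{eq:kone-L2-rate}. The only subtle point here is that the equivalence \eqref{eq:Xsigma-equiv} must be applied to $\cl u-\cl w_h\in X$ with the energy extension of the trace component, and this is exactly what makes the minimization property of $\ET$ usable.
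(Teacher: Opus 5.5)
Your proposal is correct and follows the paper's proof essentially step for step. It uses the $\sigma$-uniform bound for $\varPi_h^{++}$ from \eqref{eq:kone-sigma} with $\sigma_K=\sigma$, the Fortin/mean-preserving splitting for reliability, and quasioptimality combined with \eqref{eq:L2-scaled-energy}, \eqref{eq:Xsigma-equiv} and the admissible extension $u-I_hu$ for the rate (your closing remark about absorbing a square-root factor is unnecessary, since $\sqrt{1+k_\sigma}\cdot\sqrt{1+k_\sigma}$ already yields exactly $1+k_\sigma$).
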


\begin{proof}
  Theorem~\ref{thm:kone-plusplus} shows that $\varPi_h^{++}$ maps into
  $V_h^{++}$, has the Fortin property, and, by \eqref{eq:kone-sigma},
  with $\sigma_K=\sigma\ge a_K$, satisfies
  $ \|\varPi_h^{++}v\|_{Y,\sigma} \le C_1\|v\|_{Y,\sigma},$ with a
  constant $C_1$ depending only on $N$ and $\kappa_0$.

  \emph{Proof of (a).}
  Consistency~\eqref{eq:consistency} and
  the inclusion $Y_h\subset Y$ immediately give the efficiency bound
  $\etas\le\|\cl{u} -\clush\|_{\Enrm,\sigma}$ as in the proof of Theorem~\ref{thm:apost}.
  For reliability, repeating the same
  splitting in that proof, but now with
  $\varPi_h^{++}$, its Fortin property eliminates
  $\bform(\clush,v-\varPi_h^{++}v)$, while its mean-preserving property
  gives, on every element,
  \begin{equation*}
    \ip{f}{v-\varPi_K^{++}v}{K}
    =\ip{f-Q_0f}{v-\varPi_K^{++}v}{K}.
  \end{equation*}
  The Cauchy--Schwarz inequality, first element-wise and then over the
  mesh, yields
  \begin{equation*}
    \big|\ip{f}{v-\varPi_h^{++}v}{\Om}\big|
    \le\oscs(f)\,
      \|(I-\varPi_h^{++})v\|_{Y,\sigma}
    \le(1+C_1)\oscs(f)\|v\|_{Y,\sigma}.
  \end{equation*}
  Taking the supremum over $v$ proves \eqref{eq:kone-apost}.
  Applying \eqref{eq:L2-scaled-energy} to
  $\cl u -\clush$ gives \eqref{eq:kone-L2-apost} and its
  piecewise-constant specialization.

  \emph{Proof of (b).}
  The proof of Theorem~\ref{thm:quasiopt}, with the scaled test and
  energy norms, gives
  \begin{equation}
    \label{eq:kone-scaled-quasiopt}
    \|\cl{u}-\clush\|_{\Enrm,\sigma}
    \;\lesssim\; \inf_{\cl w_h\in X_h}
      \|\cl{u}-\cl w_h\|_{\Enrm,\sigma}.
  \end{equation}
  Take $\cl w_h=(Q_0u,\Dh I_hu)$, where $I_h$ is the degree-one
  Scott--Zhang interpolant preserving the inflow boundary condition.
  Since $u-I_hu$ is an admissible extension of
  $\Dh(u-I_hu)$, standard approximation estimates and
  \eqref{eq:Xsigma-equiv} give
  \begin{align*}
    \|\cl{u}-\cl w_h\|_{\Enrm,\sigma}
    &\le \sqrt{1+k_\sigma}\left(
      \nrmK{u-Q_0u}{2}{\Om}
      +\sigma\nrmK{\beta\cdot\nabla(u-I_hu)}{2}{\Om}
      \right)\\
    &\le C\sqrt{1+k_\sigma}(1+\sigma\bar\beta)
      h\|u\|_{H^2(\Om)}.
  \end{align*}
  Combining this with \eqref{eq:kone-scaled-quasiopt} and
  \eqref{eq:L2-scaled-energy} proves~\eqref{eq:kone-L2-rate}.
\end{proof}

\begin{remark}[Optimal $\sigma$]
  \label{rem:sigma-choice}
  The two factors in \eqref{eq:kone-L2-rate} pull in opposite
  directions: 
  $1 + k_\sigma$ decreases to $1$ while $1 + \sigma\bar\beta$
  grows linearly as $\sigma \to \infty$. As $\sigma\to 0$,
  an elementary expansion shows that 
  $k_\sigma =  (C_{\mathrm{PF}}(2,\beta)/\sigma)^2  + 1 +  O(\sigma^{2})$.  
  The product
  $(1+k_\sigma)\big(1+\sigma\bar\beta\big)$
  therefore tends to infinity at both
  ends so must have a minimum
  determined by $C_{\mathrm{PF}}(2,\beta)$ and $\bar\beta$ alone. Also observe that   it is essential to hold  $\sigma$ fixed as $h \to 0$ (so that the 
  hypothesis $\max_K a_K \le \sigma$ of Theorem~\ref{thm:kone-rates}
  is satisfied as $h \to 0$.) The smallest admissible choice
  $\sigma = \max_K a_K$ destroys the rate (as then $\sigma \eqsim h$,
  $k_\sigma \eqsim h^{-2}$, and the right hand side of
  \eqref{eq:kone-L2-rate} diverges like $h^{-1}$). The optimal 
  rate is thus a consequence of decoupling the test norm scale from
  the mesh scale.
\end{remark}

\section{Closing remarks}
\label{sec:remarks}

Discrete stability analysis of practical DPG methods is more involved for transport equations than for second-order elliptic problems, for which  enriched polynomial test spaces are available~\cite{GopalQiu14,NagarPetriDemko17}.
Ever since~\cite{DemkoGopal10} exhibited nonpolynomial optimal test functions for advection,
finding a stable polynomial test space surrogate has remained a challenge.
The key difficulty is to  obtain estimates uniform in the direction of the flow relative to the mesh.
Until now, the only DPG stability result for advection,
using standard finite element spaces,
that was free
of any condition on the flow direction was that
of~\cite{BroerDahmeSteve18}.
They traced the difficulty to the instability of optimal test functions as $\beta$ becomes tangent to an element facet, and recovered orientation-uniform stability using polynomial surrogates on a sufficiently deep fixed subgrid \cite[Section~4.4 and Theorem~4.8]{BroerDahmeSteve18}. Because the required refinement depth is not quantified, their result left open whether an explicit polynomial test space on the original mesh can suffice. We have answered this question affirmatively without any directional restrictions for $k=1$, and under a uniform non-characteristic facet condition for $k\ge 2$.

Our Fortin construction  for the
ultraweak advection formulation  splits into  facet conditions
involving the signed facet-bubble combination
$\bdr$, and interior moment conditions involving the
interior bubble $\bint$. Building the local test space out of exactly
these two ingredients makes the Fortin system square and
unisolvent, yielding a minimal test space on the same mesh (without any refinement).
Everything quantitative then follows from the scaling estimates of
Section~\ref{sec:scaling}, in which the advection vector is carried
as a variable through a compactness argument, so that the constants are uniform over admissible flow
directions and invariant under rescaling of $\beta$.

At lowest order $k=1$, our results have no directional restrictions and admit meshes containing characteristic facets.
In our first step, the minimal test space, without any augmentation, gave
the vacuous rate $h^{\min(s,k)-1}=1$ for the field in the mesh-independent norm.
Adding one constant per element and choosing a fixed scale $\sigma\ge\max_K a_K$ recovered the optimal first-order rate in~$L_2(\Om)$, and
an  additional mean-preserving bubble augmentation yielded  an a posteriori bound whose data oscillation vanishes for piecewise constant data.
That so
small a modification should close the gap is
striking. It  suggests that the
scale $\sigma$ deserves to be treated as a genuine design parameter
whose numerical and adaptive selection merits further study.
For $k\ge 2$, the result is qualified (needing
Assumption~\ref{asm:theta}).

Finding a test space with this qualification removed
is an open  problem.
The Fortin conditions themselves are not the obstruction: the correction formula \eqref{eq:piplus} remains Fortin for every linear map $R_K$. The missing step is to choose $R_K$ so that $v-R_Kv$ annihilates the data in~\eqref{eq:annihilated}, allowing the refined bounds \eqref{eq:varPi-refined} to apply. The mean projection has this property for $k=1$. For $k=2$, it annihilates the interior data $P_0(K)$, but not the generally nonconstant boundary data $\beta\cdot\nabla\Pkperp{2}(K)$; for $k\ge3$, it generally annihilates neither.
This suggests seeking a bounded linear map
$
R_K:\W{q}{K}\rightarrow
Z_{k'}(K):=\{\varphi\in P_{k'}(K):
\beta_K\cdot\nabla\varphi=0\}
$
satisfying
$
\ip{v-R_Kv}{\mu}{K}=0
$ for  all $\mu\in M_k(K):=P_{k-2}(K)+
\beta\cdot\nabla\Pkperp{k}(K)
\subseteq P_{k-1}(K).
$
Then $v-R_Kv$ satisfies \eqref{eq:annihilated}, and the proof of Theorem~\ref{thm:kone}(b) extends with $V(K)+Z_{k'}(K)$ in place of $V(K)+P_0(K)$. (No projection property is required.)
The open question is whether such an $R_K$ can be constructed with uniformly bounded degree and norm over the admissible flow directions. The studies in~\cite{DemkoRoberMunoz22} suggest $k'\ge2k-2$ in two dimensions, while the optimal test space of \cite{DemkoGopal10} supplies the corresponding nonpolynomial streamline-constant correction. One other limitation of this work
is that  $\beta$ is assumed piecewise constant. Extension of the Fortin analysis to variable fields is desirable and is postponed to future work.

Ongoing work aims to apply the results here in a
scientific machine learning context, where we approximate the
high-dimensional parameter-to-solution map $\beta \mapsto u$
of~\eqref{eq:advection} by neural networks (NNs). Traditional
applications of Theorem~\ref{thm:apost}
and~\eqref{eq:kone-apost} are in adaptivity, but as explained in~\cite{CastiDahmeGopal25}, they also generate variationally correct loss functions
for training NNs. The bound~\eqref{eq:kone-apost} for $\etas$ is
particularly interesting in this context: for $k = 1$ it is robust in
the element-wise directions of $\beta$ on every shape-regular mesh
with $\max_K a_K \le \sigma$. Moreover, $\sigma$ may be supplied to
the NN as an input parameter alongside the element-wise~$\beta$.
Because of this ongoing numerical work, and ample
prior numerical studies on  DPG methods for advection~\cite{DemkoGopal10,DemkoGopal11,DemkoRoberMunoz22},
no further numerical experiments are reported here.

\section*{Funding}

This work was supported in part by NSF Grant 2245077.

\bibliographystyle{siam}
\bibliography{advection_dpg}

\begin{thebibliography}{10}

\bibitem{BroerDahmeSteve18}
{\sc D.~Broersen, W.~Dahmen, and R.~P. Stevenson}, {\em On the stability of
  {DPG} formulations of transport equations}, Math. Comp., 87 (2018),
  pp.~1051--1082.

\bibitem{Canti17}
{\sc P.~Cantin}, {\em Well-posedness of the scalar and the vector
  advection--reaction problems in {Banach} graph spaces}, C. R. Acad. Sci.
  Paris, Ser. I, 355 (2017), pp.~892--902.

\bibitem{CarstDemkoGopal14}
{\sc C.~Carstensen, L.~Demkowicz, and J.~Gopalakrishnan}, {\em A posteriori
  error control for {DPG} methods}, SIAM J. Numer. Anal., 52 (2014),
  pp.~1335--1353.

\bibitem{CarstDemkoGopal16}
{\sc C.~Carstensen, L.~Demkowicz, and J.~Gopalakrishnan}, {\em Breaking spaces
  and forms for the {DPG} method and applications including {Maxwell}
  equations}, Comput. Math. Appl., 72 (2016), pp.~494--522.

\bibitem{CastiDahmeGopal25}
{\sc P.~{Cort{\'e}s Castillo}, W.~Dahmen, and J.~Gopalakrishnan}, {\em {DPG}
  loss functions for learning parameter-to-solution maps by neural networks},
  ArXiv Preprint,  (2025).

\bibitem{DahmeHuangSchwa11a}
{\sc W.~Dahmen, C.~Huang, C.~Schwab, and G.~Welper}, {\em Adaptive
  {Petrov-Galerkin} methods for first order transport equations}, SIAM J.
  Numer. Anal., 50 (2012), pp.~2420--2445.

\bibitem{DemkoGopal10}
{\sc L.~Demkowicz and J.~Gopalakrishnan}, {\em A class of discontinuous
  {P}etrov--{G}alerkin methods. {Part I}: {T}he transport equation}, Comput.
  Methods Appl. Mech. Engrg., 199 (2010), pp.~1558--1572.

\bibitem{DemkoGopal11}
\leavevmode\vrule height 2pt depth -1.6pt width 23pt, {\em A class of
  discontinuous {P}etrov--{G}alerkin methods. {II}. {O}ptimal test functions},
  Numer. Methods Partial Differential Equations, 27 (2011), pp.~70--105.

\bibitem{DPGacta}
{\sc L.~Demkowicz and J.~Gopalakrishnan}, {\em The discontinuous
  {Petrov}--{Galerkin} method}, Acta Numerica, 34 (2025), pp.~293--384.

\bibitem{DemkoRoberMunoz22}
{\sc L.~Demkowicz, N.~V. Roberts, and J.~Mu{\~n}oz-Matute}, {\em The {DPG}
  method for the convection-reaction problem, revisited}, Comput. Methods Appl.
  Math., 23 (2023), pp.~93--125.

\bibitem{FuhreHeuer24}
{\sc T.~F\"{u}hrer and N.~Heuer}, {\em Robust {DPG} test spaces and {F}ortin
  operators---{T}he {$H^1$} and {$H(\mathrm{div})$} cases}, SIAM Journal on
  Numerical Analysis, 62 (2024), pp.~718--748.

\bibitem{GopalMonkSepul15}
{\sc J.~Gopalakrishnan, P.~Monk, and P.~Sep\'ulveda}, {\em A tent pitching
  scheme motivated by {Friedrichs} theory}, Computers and Mathematics with
  Applications, 70 (2015), pp.~1114--1135.

\bibitem{GopalMugaOliva14}
{\sc J.~Gopalakrishnan, I.~Muga, and N.~Olivares}, {\em Dispersive and
  dissipative errors in the {DPG} method with scaled norms for the {Helmholtz}
  equation}, SIAM J. Sci. Comput., 36 (2014), pp.~A20--A39.

\bibitem{GopalQiu14}
{\sc J.~Gopalakrishnan and W.~Qiu}, {\em An analysis of the practical {DPG}
  method}, Math. Comp., 83 (2014), pp.~537--552.

\bibitem{Guerm99}
{\sc J.-L. Guermond}, {\em Stabilization of {Galerkin} approximations of
  transport equations by subgrid modeling}, M2AN Math. Model. Numer. Anal., 33
  (1999), pp.~1293--1316.

\bibitem{Jense04}
{\sc M.~Jensen}, {\em Discontinuous {Galerkin} Methods for {Friedrichs} Systems
  with {Irregular} Solutions}, PhD thesis, University of Oxford, 2004.

\bibitem{MugaTylerZee19}
{\sc I.~Muga, M.~J.~W. Tyler, and K.~G. van~der Zee}, {\em The discrete-dual
  minimal-residual method ({DDMRes}) for weak advection-reaction problems in
  {Banach} spaces}, Comput. Methods Appl. Math., 19 (2019), pp.~557--579.

\bibitem{MugaZee20}
{\sc I.~Muga and K.~G. van~der Zee}, {\em Discretization of linear problems in
  {Banach} spaces: residual minimization, nonlinear {Petrov}--{Galerkin}, and
  monotone mixed methods}, SIAM J. Numer. Anal., 58 (2020), pp.~3406--3426.

\bibitem{NagarPetriDemko17}
{\sc S.~Nagaraj, S.~Petrides, and L.~Demkowicz}, {\em Construction of {DPG}
  {F}ortin operators for second order problems}, Comput. Math. Appl., 74
  (2017), pp.~1964--1980.

\bibitem{ScottZhang90}
{\sc L.~R. Scott and S.~Zhang}, {\em Finite element interpolation of nonsmooth
  functions satisfying boundary conditions}, Math. Comp., 54 (1990),
  pp.~483--493.

\end{thebibliography}

\end{document}